\title{On Galois groups and PAC substructures}
\author[D. M. HOFFMANN]{Daniel Max Hoffmann$^{\dagger}$}
\thanks{2010 \textit{Mathematics Subject Classification}. 
Primary 03C50;
Secondary 03C45, 03C95.}
\thanks{\textit{Key words and phrases}. pseudo-algebraically closed structures, group actions, Galois groups, Frattini covers}
\thanks{$^{\dagger}$SDG. Supported by NCN (National Science Centre, Poland) grants no. 2016/21/N/ST1/01465,
and 2015/19/B/ST1/01150.}
\address{$^{\dagger}$Instytut Matematyki\\
Uniwersytet Warszawski\\
Warszawa\\
Poland}
\email{daniel.max.hoffmann@gmail.com}
\urladdr{https://www.researchgate.net/profile/Daniel\_Hoffmann8}
\DeclareMathOperator{\fratt}{Fratt}
\DeclareMathOperator{\acl}{acl} \DeclareMathOperator{\dcl}{dcl} 
 \DeclareMathOperator{\aut}{Aut} \DeclareMathOperator{\id}{id}
\DeclareMathOperator{\stab}{Stab}
\DeclareMathOperator{\imG}{Im}
 \DeclareMathOperator{\theo}{Th}
 \DeclareMathOperator{\eq}{eq}
\DeclareMathOperator{\tp}{tp}
\DeclareMathOperator{\ddf}{DF}\DeclareMathOperator{\dcf}{DCF}\DeclareMathOperator{\scf}{SCF}
\DeclareMathOperator{\qftp}{qftp}
\newtheorem{theorem}{Theorem}[section]
\newtheorem{prop}[theorem]{Proposition}
\newtheorem{lemma}[theorem]{Lemma}
\newtheorem{cor}[theorem]{Corollary}
\newtheorem{fact}[theorem]{Fact}
\theoremstyle{definition}
\newtheorem{definition}[theorem]{Definition}
\newtheorem{example}[theorem]{Example}
\newtheorem{remark}[theorem]{Remark}
\newtheorem{question}[theorem]{Question}
\newtheorem{conj}[theorem]{Conjecture}
\theoremstyle{remark}
\newtheorem*{theorem*}{Theorem}
\newtheorem*{cor*}{Corollary}
\theoremstyle{definition}
\theoremstyle{definition}
\theoremstyle{definition}
\theoremstyle{remark}
\providecommand*{\cupdot}{%
  \mathbin{%
    \mathpalette\@cupdot{}%
  }%
}
\newcommand*{\@cupdot}[2]{%
  \ooalign{%
    $\m@th#1\cup$\cr
    \sbox0{$#1\cup$}%
    \dimen@=\ht0 %
    \sbox0{$\m@th#1\cdot$}%
    \advance\dimen@ by -\ht0 %
    \dimen@=.5\dimen@
    \hidewidth\raise\dimen@\box0\hidewidth
  }%
}
\providecommand*{\bigcupdot}{%
  \mathop{%
    \vphantom{\bigcup}%
    \mathpalette\@bigcupdot{}%
  }%
}
\newcommand*{\@bigcupdot}[2]{%
  \ooalign{%
    $\m@th#1\bigcup$\cr
    \sbox0{$#1\bigcup$}%
    \dimen@=\ht0 %
    \advance\dimen@ by -\dp0 %
    \sbox0{\scalebox{2}{$\m@th#1\cdot$}}%
    \advance\dimen@ by -\ht0 %
    \dimen@=.5\dimen@
    \hidewidth\raise\dimen@\box0\hidewidth
  }%
}
\def\Ind#1#2{#1\setbox0=\hbox{$#1x$}\kern\wd0\hbox to 0pt{\hss$#1\mid$\hss}
\lower.9\ht0\hbox to 0pt{\hss$#1\smile$\hss}\kern\wd0}
\def\ind{\mathop{\mathpalette\Ind{}}}
\def\notind#1#2{#1\setbox0=\hbox{$#1x$}\kern\wd0
\hbox to 0pt{\mathchardef\nn=12854\hss$#1\nn$\kern1.4\wd0\hss}
\hbox to 0pt{\hss$#1\mid$\hss}\lower.9\ht0 \hbox to 0pt{\hss$#1\smile$\hss}\kern\wd0}
\begin{document}

\newcommand{\twoc}[3]{ {#1} \choose {{#2}|{#3}}}
\newcommand{\thrc}[4]{ {#1} \choose {{#2}|{#3}|{#4}}}
\newcommand{\Rr}{{\mathds{R}}}
\newcommand{\Kk}{{\mathds{K}}}

\newcommand{\dlog}{\mathrm{ld}}
\newcommand{\ga}{\mathbb{G}_{\rm{a}}}
\newcommand{\gm}{\mathbb{G}_{\rm{m}}}
\newcommand{\gaf}{\widehat{\mathbb{G}}_{\rm{a}}}
\newcommand{\gmf}{\widehat{\mathbb{G}}_{\rm{m}}}
\newcommand{\gdf}{\mathfrak{g}-\ddf}
\newcommand{\gdcf}{\mathfrak{g}-\dcf}
\newcommand{\fdf}{F-\ddf}
\newcommand{\fdcf}{F-\dcf}
\newcommand{\mw}{\scf_{\text{MW},e}}

\begin{abstract}
\begin{enumerate}
\item
We show that for an arbitrary stable theory $T$, a group $G$ is profinite if and only if $G$ occurs as a Galois group of some Galois extension inside a monster model of $T$.

\item
We prove that any PAC substructure of the monster model of $T$ has projective absolute Galois group. 

\item
Moreover, any projective profinite group $G$ is isomorphic to the absolute Galois group of a definably closed substructure $P$ of the monster model.
If $T$ is $\omega$-stable, then $P$ can be chosen to be PAC.

\item
Finally, we provide a description of some Galois groups of existentially closed substructures with $G$-action in the terms of the universal Frattini cover. Such structures might be understood as a new source of examples of PAC structures.
\end{enumerate}
\end{abstract}
\maketitle

\section{Introduction}
Content of this paper is a part of our project on \emph{pseudo-algebraically closed} (PAC) structures. The project aims to generalize the Galois theory from the case of PAC fields to the case of PAC structures and to use a description of PAC structures to provide new examples in the studies on NSOP$_1$. Due to the Elementarily Equivalence Theorem for PAC fields (Theorem 20.3.3 in \cite{FrJa}),
PAC fields were the core of research in the field theory in the second half of the twentieth century.
In a subsequent paper (\cite{DHL1}), we achieve a desired generalization of the Elementarily Equivalence Theorem and therefore this direction of research seems to be promising.
Many arguments in the proofs of the more model-theoretical part of the research on PAC fields are based on the fact that the absolute Galois group of a PAC field is projective (e.g. a characterization of the model-theoretic algebraic closure and the forking independence in a PAC field, see \cite{ChaPil}). In Theorem \ref{PAC.proj}, we provide a suitable generalization of the aforementioned fact: the absolute Galois groups of a PAC structure is projective. Moreover, in the case of PAC fields, the absolute Galois group plays crucial role. In Section \ref{sec.G.actions}, we describe a method for obtaining PAC structures with a desired absolute Galois group, so ``controlled" PAC structures:

\begin{figure}[H]
\begin{tabular}{r|l}
 fact & status \\ \hline
e.c. structures &\\ with $G$-action are PAC& proven in \cite{Hoff3} \\ \hdashline
 PAC structures are &\\
 controlled by Galois groups & proven in \cite{DHL1} \\ \hdashline
 description of Galois groups &\\
 of e.c. structures with $G$-action & see Corollary \ref{cor.Galois.descr}
\end{tabular}
\end{figure}

In a broader context,
studying PAC substructures was, and still is, interesting as a generalization of a very elegant part of the theory of fields. Important results were achieved in the case of strongly minimal theories (in \cite{manuscript}) and then bounded PAC structures were described in the case of a stable ``ambient overstructure" as structures with simple theory (\cite{PilPol} and \cite{Polkowska}). 
Among the other 
model-theoretical results on PAC fields 
(e.g. \cite{ershov1980}, \cite{CDM81}, \cite{cherlindriesmacintyre}, \cite{ChaPil}, \cite{chatzidakis2002}, 
\cite{chahru04}) there are many results promising a very interesting generalization to the level of PAC structures.
For example, Nick Ramsey in his doctoral dissertation (\cite{nickPhD}) shows that a PAC field is NSOP$_1$ if and only if its absolute Galois group is NSOP$_1$ (as a special many sorted structure).
Our next goal is to generalize Ramsey's result to the level of PAC structures and use it, with our previous results, to provide examples of non-field PAC structures which are NSOP$_1$ structures, and which can be somehow controlled.

We expose our results in a connection with generalizations of some well known facts from the classical Galois theory.
Suppose that $T$ is a stable complete $\mathcal{L}$-theory with quantifier elimination and elimination of imaginaries (possibly many sorted), $\mathfrak{C}$ is a monster model of $T$, and $G$ is a group smaller than the saturation of $\mathfrak{C}$.
In this paper, we are interested in the following two properties related to profinite groups:
\begin{enumerate}
\item[(P$_T$)] $G$ is profinite iff there exists a Galois extension $A\subseteq B$ in $\mathfrak{C}$ such that $G\cong\aut_{\mathcal{L}}(B/A)$
\item[(PP$_T$)] profinite $G$ is projective iff $G\cong\mathcal{G}(P)$ for some PAC substructure $P$ of $\mathfrak{C}$
\end{enumerate}
(where $\mathcal{G}(P)$ is the absolute Galois group of $P$, for definitions of the rest of notions appearing in the above lines check Section \ref{sec.basics}). By well known facts, properties P$_T$ and PP$_T$ hold for $T=$ACF (e.g. Fact \ref{fact317}, Corollary 1.3.4 and Corollary 23.1.3 in \cite{FrJa}). We ask here whether other stable theories enjoy properties P$_T$ and PP$_T$.
By Corollary \ref{pro.embed2} Property P$_T$ holds, which 
was not hard to show, for any stable theory $T$. On the other hand, we ``encountered" difficulties with proving property PP$_T$ for any stable theory $T$ (with elimination of quantifiers and elimination of imaginaries). 
Theorem \ref{fr.ja.2311} shows that any projective profinite group $G$ is isomorphic to the absolute Galois group of some (definably closed) substructure $P$. However, to state that $P$ is PAC, additional assumptions on $T$ are needed (the ``moreover" part in Theorem \ref{fr.ja.2311}) - we need to assume that any type over a small $A\subseteq\mathfrak{C}$ has only finitely many extensions over $\acl_{\mathcal{L}}^{\mathfrak{C}}(A)$. This additional assumption is related to Lemma \ref{alg.ext.PAC}, which is used in the proof of the ``moreover" part of Theorem \ref{fr.ja.2311}. It turns out that, in opposite to the theory of fields, an algebraic extension of a PAC substructure is not necessary PAC (see Remark \ref{counterexamples.alg.PAC}) and that was the main obstacle in proving that any stable theory $T$ enjoys property PP$_T$. 

The central result of this paper is Theorem \ref{PAC.proj}. It was already known for fields and also in the case of strongly minimal theories (Lemma 1.17 in \cite{manuscript}). We used the notion of regularity to stretch the proof of Lemma 1.17 in \cite{manuscript} over the case of stable theories.

Section \ref{sec.G.actions} uses previous results to describe Galois groups of substructures of our stable monster model $\mathfrak{C}$, which are equipped with a group action by automorphisms. In \cite{Hoff3}, we proved that existentially closed substructures of $\mathfrak{C}$ equipped with a group action are PAC substructures, similarly as invariants of the group action. Hence consideration of 
existentially closed substructures with a group action provides a new class of PAC substructures for any stable theory $T$. It was natural to check what we can say about absolute Galois groups of such PAC substructures, and so get an intuition about elementary invariants of PAC substructures in general. Some statements in Section \ref{sec.G.actions} generalize theorems from \cite{sjogren}, which focus on the theory of fields with a group action. More details might be found in Section \ref{sec.G.actions}.

We thank Anand Pillay, Ludomir Newelski and Nick Ramsey for helpful discussions which clarified several problems occurring during our work on this paper.


\section{Basics}\label{sec.basics}
\subsection{Preliminaries and conventions}
If $A$ and $B$ are two sequences, then $AB$ denotes the concatenation of $A$ and $B$, i.e. $A^{\frown}B$. If $A$ and $B$ are considered only as sets, then $AB$ denotes $A\cup B$. Finally, if $H$ is a group and $A$ is a set, then the orbit of $A$ under an action of $H$ will be denoted by $H\cdot A$ or (if it will not lead to any confusion) by $HA$.

We assume that theories in this paper are theories with infinite models.
Let $N$ and $N'$ be $\mathcal{L}$-structures and let $E$ be a subset of $N$. We use $\langle E\rangle_{\mathcal{L}}$ to denote the $\mathcal{L}$-substructure of $N$ generated by $E$. Moreover, $\acl_{\mathcal{L}}^N(E)$ denotes the algebraic closure of $E$ in $N$ in the sense of the language $\mathcal{L}$ and the $\mathcal{L}$-theory $\theo(N)$ (similarly for $\dcl_{\mathcal{L}}^{N}(E)$ and $\tp_{\mathcal{L}}^N(a/E)$). 

We fix (``once for all") an $\mathcal{L}$-structure $\mathfrak{C}$ which is $\kappa$-saturated and $\kappa$-strongly homogeneous and set $T:=\theo_{\mathcal{L}}(\mathfrak{C})$. In other words: $\mathfrak{C}$ is a monster model for the complete theory $T$. At some point, we will start to assume that $T$ is stable and has quantifier elimination and elimination of imaginaries, which is rather a technical assumption if $T$ is stable, since we are able to force every stable $T$ to have quantifier elimination and elimination of imaginaries by passing to $(T^{\eq})$ and then taking the Morleyisation $(T^{\eq})^m$ of the theory $T^{\eq}$.

A group $G$ will be considered in different places of the paper, but we \textbf{always assume that $|G|<\kappa$}. 

We use in the whole text the following convention:
results named by ``Fact" are things recalled from previous papers, results named in other way are generalizations or new obtained results.

\subsection{Old definitions and facts}
In this subsection, we provide notions and facts from \cite{Hoff3}, which are the basis for the rest of the paper. We omitted proofs, but always indicated corresponding fact in \cite{Hoff3}, so a reader interested in proofs can easily check them in \cite{Hoff3}.

We recall here the notion of a \emph{regular extension}, a \emph{PAC substructure}, and a \emph{Galois extension}.

\begin{definition}\label{regular.def}
\begin{enumerate}
\item Let $E\subseteq A$ be small subsets of $\mathfrak{C}$. We say that $E\subseteq A$ is \emph{$\mathcal{L}$-regular} (or just \emph{regular}) if
$$\dcl_{\mathcal{L}}^{\mathfrak{C}}(A)\cap\acl_{\mathcal{L}}^{\mathfrak{C}}(E)=\dcl_{\mathcal{L}}^{\mathfrak{C}}(E).$$

\item Let $N$ be a small $\mathcal{L}$-substructure of $\mathfrak{C}$. We say that $N$ is \emph{pseudo-algebraically closed} (\emph{PAC}) if for every small $\mathcal{L}$-substructure $N'$ of $\mathfrak{C}$, which is $\mathcal{L}$-regular extension of $N$, it follows $N\preceq_1 N'$ (i.e. $N$ is existentially closed in $N'$).
\end{enumerate}
\end{definition}
\noindent

\begin{fact}[Remark 3.2 in \cite{Hoff3}]\label{regular.remark}
\begin{enumerate}
\item
Note that the regularity condition is invariant under the action of automorphisms.

\item
Of course, if $E$ is algebraically closed, then $E\subseteq A$ is regular for any small $A$.

\item
If $E\subseteq A$ is regular, and $E\subseteq A'\subseteq A$, then $E\subseteq A'$ is regular.

\item 
Assume that $E\subseteq A$ and $A\subseteq B$ are regular. It follows that $E\subseteq B$ is regular.

\item 
Let $P$ be a small $\mathcal{L}$-substructure of $\mathfrak{C}$. There exists
a small $\mathcal{L}$-substructure $P^{\ast}$ of $\mathfrak{C}$ such that $P\subseteq P^{\ast}$ is regular and $P^{\ast}$ is PAC.
\end{enumerate}
\end{fact}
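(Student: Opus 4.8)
The plan is to build $P^{\ast}$ as the union of a continuous increasing chain of small substructures, realizing one existential formula at a time while keeping every step a regular extension of its immediate predecessor, so that transitivity of regularity (Fact \ref{regular.remark}(4)) propagates regularity across the whole chain. First I would fix a regular cardinal $\lambda$ with $|P|+|\mathcal{L}|+\aleph_0\le\lambda<\kappa$ and construct a chain $(M_\alpha)_{\alpha<\lambda}$ with $M_0=\langle P\rangle_{\mathcal{L}}$, taking unions at limit stages. At a successor stage a bookkeeping function hands me a pair $(\beta,\varphi)$ with $\beta\le\alpha$ and $\varphi(x)=\exists y\,\psi(x,y)$ an existential $\mathcal{L}$-formula ($\psi$ quantifier-free) with parameters in $M_\beta$. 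If there is a small substructure $N$ of $\mathfrak{C}$ with $M_\alpha\subseteq N$ regular and $N\models\varphi(b)$, I fix witnesses $b,d$ with $N\models\psi(b,d)$ and set $M_{\alpha+1}=\langle M_\alpha bd\rangle_{\mathcal{L}}$; otherwise I set $M_{\alpha+1}=M_\alpha$. Since $M_{\alpha+1}\subseteq N$ and $M_\alpha\subseteq N$ is regular, Fact \ref{regular.remark}(3) gives that $M_\alpha\subseteq M_{\alpha+1}$ is regular, and as $\psi$ is quantifier-free $M_{\alpha+1}\models\varphi(b)$. I arrange the bookkeeping so that every such pair $(\beta,\varphi)$ is scheduled at some successor stage $>\beta$, and finally put $P^{\ast}=\bigcup_{\alpha<\lambda}M_\alpha$.

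Next I would check that $P^{\ast}$ is a small regular extension of $P$. Regularity of each successor step together with Fact \ref{regular.remark}(4) yields, by induction, that $M_\beta\subseteq M_\alpha$ is regular for all $\beta\le\alpha<\lambda$; this persists at a limit $\mu$ because $\dcl_{\mathcal{L}}^{\mathfrak{C}}$ and $\acl_{\mathcal{L}}^{\mathfrak{C}}$ are finitary, so any $c\in\dcl_{\mathcal{L}}^{\mathfrak{C}}(M_\mu)\cap\acl_{\mathcal{L}}^{\mathfrak{C}}(M_\beta)$ already lies in $\dcl_{\mathcal{L}}^{\mathfrak{C}}(M_\alpha)\cap\acl_{\mathcal{L}}^{\mathfrak{C}}(M_\beta)=\dcl_{\mathcal{L}}^{\mathfrak{C}}(M_\beta)$ for some $\beta\le\alpha<\mu$. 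In particular $M_\beta\subseteq P^{\ast}$ is regular for every $\beta$, whence $P\subseteq P^{\ast}$ is regular, and the bound $\lambda<\kappa$ keeps $P^{\ast}$ small.

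It remains to verify that $P^{\ast}$ is PAC. Let $N'$ be a small regular extension of $P^{\ast}$ and suppose $N'\models\exists y\,\psi(a,y)$ with $a\in P^{\ast}$ and $\psi$ quantifier-free; write $\varphi(x)=\exists y\,\psi(x,y)$, whose parameters lie in some $M_\beta$. By transitivity (Fact \ref{regular.remark}(4)), since $M_\alpha\subseteq P^{\ast}$ and $P^{\ast}\subseteq N'$ are regular, $M_\alpha\subseteq N'$ is a regular extension for every $\beta\le\alpha<\lambda$. Hence $\varphi$ is realized in the regular extension $N'$ of $M_\alpha$ for every such $\alpha$; in particular, at the successor stage at which the pair $(\beta,\varphi)$ is scheduled the ``if'' branch of the construction applies (with $N'$ certifying the required existence), so $\varphi$ is realized in some $M_{\alpha+1}\subseteq P^{\ast}$. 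Thus $P^{\ast}\models\exists y\,\psi(a,y)$, which is exactly $P^{\ast}\preceq_1 N'$.

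The step I expect to be the main obstacle is the PAC verification, precisely because the construction can only react to existential formulas realized in a regular extension of the \emph{current} stage, whereas the test extension $N'$ is revealed only afterwards; the resolution is the retroactive observation that transitivity of regularity turns $N'$ itself into a regular extension of each $M_\alpha$, so $N'$ certifies in hindsight that the scheduled formula was realizable at construction time. One must also be careful that the upward regularity $M_\beta\subseteq P^{\ast}$ does not come for free from Fact \ref{regular.remark}(3) (which only passes to intermediate substructures) but must be produced by building every single step as a regular extension and then combining transitivity with the finitary limit argument above.
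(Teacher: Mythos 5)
The paper itself gives no proof of this Fact: by the paper's stated convention it is imported from \cite{Hoff3} (Remark 3.2, and for item (5) Proposition 3.6 there), so your proposal can only be measured against the standard construction that the citation refers to. Your architecture is that construction, and most of it is carried out correctly: you only treat item (5), which is fine since (1)--(4) are immediate from the definition of regularity (and your use of (3) and (4) as tools is not circular); each successor step is regular by item (3) because it sits inside a regular extension of $M_\alpha$; regularity propagates along the chain by item (4) and through limits by the finite character of $\dcl_{\mathcal{L}}^{\mathfrak{C}}$; and the ``retroactive certification'' point -- that transitivity makes the test extension $N'$ a regular extension of every stage $M_\alpha$, so the scheduled ``if'' branch must have fired -- is exactly the key observation.

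There is, however, a genuine gap in the last step, located in the handling of parameters. You schedule pairs $(\beta,\varphi)$ where $\varphi(x)=\exists y\,\psi(x,y)$ has parameters in $M_\beta$ and a \emph{free} variable $x$, and at the scheduled stage the construction picks its own witnesses $b,d$ in some regular extension $N$ with $N\models\psi(b,d)$. What this yields at the end is only $P^{\ast}\models\exists x\,\exists y\,\psi(x,y)$; it does not yield $P^{\ast}\models\exists y\,\psi(a,y)$ for the particular tuple $a\in P^{\ast}$ handed to you by $N'$, since the construction's witness $b$ need not be related to $a$ at all (if $\psi(x,y)$ says, e.g., that $y$ is a new element $E$-equivalent to $x$, realizing it at some $b$ says nothing about the class of $a$). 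So the inference ``$\varphi$ is realized in some $M_{\alpha+1}$, thus $P^{\ast}\models\exists y\,\psi(a,y)$'' is a non sequitur, and what you have proved is a weak existential closedness, not $P^{\ast}\preceq_1 N'$. The repair is local and uses only what is already in your proof: schedule existential \emph{sentences} rather than formulas with a distinguished free variable. Since $a$ is a finite tuple in the increasing union $P^{\ast}=\bigcup_{\gamma<\lambda}M_\gamma$, it lies in some $M_{\beta'}$, so $\exists y\,\psi(a,y)$ is itself a sentence with all parameters in $M_{\beta''}$, $\beta''=\max(\beta,\beta')$; arrange that for every $\gamma<\lambda$ every existential sentence over $M_\gamma$ (there are at most $\lambda$ many) is scheduled at some successor stage $>\gamma$, and at that stage adjoin witnesses $d$ with $\psi(a,d)$ from any small regular extension realizing the sentence. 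Then $M_{\alpha+1}\models\psi(a,d)$, and since $\psi$ is quantifier-free this passes up to $P^{\ast}$, giving $P^{\ast}\models\exists y\,\psi(a,y)$ as required; with this change your argument is correct.
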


From this point we assume that $T$ allows to \textbf{eliminate quantifiers}. Note that quantifier elimination in $T$ implies that for a small PAC substructure $P\subseteq\mathfrak{C}$ it follows $\dcl_{\mathcal{L}}^{\mathfrak{C}}(P)=P$.

\begin{fact}[Lemma 3.3 in \cite{Hoff3}]\label{lang410}
If for some small $\mathcal{L}$-substructures $P\subseteq N$ of $\mathfrak{C}$ it is $P\preceq_1 N$, then $P\subseteq N$ is regular.
\end{fact}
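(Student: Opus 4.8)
The goal is the inclusion $\dcl_{\mathcal{L}}^{\mathfrak{C}}(N)\cap\acl_{\mathcal{L}}^{\mathfrak{C}}(P)\subseteq\dcl_{\mathcal{L}}^{\mathfrak{C}}(P)$, since the reverse inclusion is immediate from $P\subseteq N$ together with $\dcl_{\mathcal{L}}^{\mathfrak{C}}(P)\subseteq\acl_{\mathcal{L}}^{\mathfrak{C}}(P)$. So I would fix $b\in\dcl_{\mathcal{L}}^{\mathfrak{C}}(N)\cap\acl_{\mathcal{L}}^{\mathfrak{C}}(P)$ and aim to prove $b\in\dcl_{\mathcal{L}}^{\mathfrak{C}}(P)$. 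Using quantifier elimination, pick a quantifier-free $\delta(x,\bar n)$ with $\bar n\in N$ isolating $b$, i.e. $b$ is the unique solution of $\delta(x,\bar n)$ in $\mathfrak{C}$; this records $b\in\dcl_{\mathcal{L}}^{\mathfrak{C}}(N)$. Using $b\in\acl_{\mathcal{L}}^{\mathfrak{C}}(P)$, pick a quantifier-free $\alpha(x,\bar p)$ with $\bar p\in P$, satisfied by $b$, whose solution set in $\mathfrak{C}$ is finite and of least possible cardinality among such formulas over $P$. Minimality forces every solution of $\alpha(x,\bar p)$ to realize $\tp_{\mathcal{L}}^{\mathfrak{C}}(b/P)$: if some solution $c_0$ had a different type over $P$, a quantifier-free formula over $P$ true of $b$ but false of $c_0$ could be conjoined to $\alpha$, strictly lowering the count. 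By strong homogeneity of $\mathfrak{C}$, this says exactly that every solution of $\alpha(x,\bar p)$ is an $\aut_{\mathcal{L}}(\mathfrak{C}/P)$-conjugate of $b$.

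The crux is that $\dcl$ and $\acl$ are computed in $\mathfrak{C}$, whereas $P\preceq_1 N$ only compares satisfaction in the substructures $N$ and $P$; to bridge this I would pass to quantifier-free, hence absolute, formulas. By quantifier elimination the conditions ``$\delta(x,\bar y)$ has a unique solution'' and ``every solution of $\delta(x,\bar y)$ satisfies $\alpha(x,\bar p)$'' are equivalent, modulo $T$, to quantifier-free formulas $\chi(\bar y)$ and $\chi'(\bar y,\bar p)$. Since quantifier-free formulas are absolute between $\mathfrak{C}$ and its substructures, and $\bar n\in N$ witnesses $\chi(\bar n)\wedge\chi'(\bar n,\bar p)$ in $\mathfrak{C}$ (the unique solution $b$ of $\delta(x,\bar n)$ does satisfy $\alpha(x,\bar p)$), we obtain $N\models\exists\bar y\,(\chi(\bar y)\wedge\chi'(\bar y,\bar p))$. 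This is an existential formula over the parameters $\bar p\in P$, so $P\preceq_1 N$ delivers $P\models\exists\bar y\,(\chi(\bar y)\wedge\chi'(\bar y,\bar p))$, i.e. there is $\bar n'\in P$ with $\chi(\bar n')\wedge\chi'(\bar n',\bar p)$.

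Unwinding this witness, $\delta(x,\bar n')$ has a unique solution $c$ in $\mathfrak{C}$ with $\bar n'\in P$, so $c\in\dcl_{\mathcal{L}}^{\mathfrak{C}}(P)$; and $c$ satisfies $\alpha(x,\bar p)$, so $c$ is an $\aut_{\mathcal{L}}(\mathfrak{C}/P)$-conjugate of $b$ by the previous paragraph. But $c\in\dcl_{\mathcal{L}}^{\mathfrak{C}}(P)$ means $c$ is fixed by all of $\aut_{\mathcal{L}}(\mathfrak{C}/P)$, so the automorphism sending $b$ to $c$ also fixes $c$, giving $b=c\in\dcl_{\mathcal{L}}^{\mathfrak{C}}(P)$, as required. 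The single genuine obstacle is the mismatch flagged above, namely reconciling the ``computed in $\mathfrak{C}$'' meaning of $\dcl$ and $\acl$ with the ``$N$ versus $P$'' meaning of $\preceq_1$; the device of replacing the uniqueness and containment conditions by their quantifier-free equivalents, turning the situation into an existential sentence over $P$, is exactly what lets existential closure finish the argument. I note that this uses only quantifier elimination and the saturation and strong homogeneity of $\mathfrak{C}$, with the minimality trick avoiding any appeal to elimination of imaginaries or stability.
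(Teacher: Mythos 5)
Your proof is correct. Note that the paper itself offers no proof of this statement: it is recalled as a Fact imported from \cite{Hoff3} (Lemma 3.3 there), so there is no in-text argument to compare against; judged on its own, your derivation is sound and self-contained. The three key moves all check out: (i) the minimality trick, which forces every solution of $\alpha(x,\bar p)$ to realize $\tp_{\mathcal{L}}^{\mathfrak{C}}(b/P)$ and hence, by strong homogeneity, to be an $\aut_{\mathcal{L}}(\mathfrak{C}/P)$-conjugate of $b$; (ii) replacing the uniqueness condition on $\delta(x,\bar y)$ and the containment condition ``every solution of $\delta$ satisfies $\alpha$'' by quantifier-free equivalents modulo $T$, which is exactly what lets these conditions pass between $\mathfrak{C}$, $N$ and $P$ and turns the hypothesis into an existential sentence over parameters in $P$ to which $P\preceq_1 N$ applies; and (iii) the endgame, where the witness $\bar n'\in P$ produces $c\in\dcl_{\mathcal{L}}^{\mathfrak{C}}(P)$ satisfying $\alpha(x,\bar p)$, so $c=\sigma(b)$ for some $\sigma\in\aut_{\mathcal{L}}(\mathfrak{C}/P)$, and since $\sigma^{-1}$ fixes $c$ one gets $b=c\in\dcl_{\mathcal{L}}^{\mathfrak{C}}(P)$. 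You are also right that the argument needs only quantifier elimination and the homogeneity of $\mathfrak{C}$, not stability or elimination of imaginaries, which is consistent with where this fact is placed in the paper's chain of assumptions.
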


\begin{definition}\label{galois.ext.def}
\begin{enumerate}
\item Assume that $A\subseteq C$ are $\mathcal{L}$-substructures of $\mathfrak{C}$. We say that $C$ is \emph{normal over $A$} (or we say that $A\subseteq C$ is a \emph{normal extension}) if $\aut_{\mathcal{L}}(\mathfrak{C}/A)\cdot C\subseteq C$.
(Note that if $C$ is small and $A\subseteq C$ is normal, then it must be $C\subseteq\acl_{\mathcal{L}}^{\mathfrak{C}}(A)$.)

\item Assume that $A\subseteq C\subseteq\acl_{\mathcal{L}}^{\mathfrak{C}}(A)$ are small $\mathcal{L}$-substructures of $\mathfrak{C}$ such that $A=\dcl_{\mathcal{L}}^{\mathfrak{C}}(A)$, $C=\dcl_{\mathcal{L}}^{\mathfrak{C}}(C)$ and $C$ is normal over $A$. In this situation we say that $A\subseteq C$ is a \emph{Galois extension}.
\end{enumerate}
\end{definition}

We evoke here several facts about Galois extensions, which will be used in the rest of the paper. These facts are standard, so the reader can treat them as exercises to the above definition of a Galois extension.

\begin{fact}[Corollary 7. in \cite{invitation}]\label{fact.exact}
Let $A$, $B$ and $C$ be small $\mathcal{L}$-substructures of $\mathfrak{C}$ such that $A\subseteq B\subseteq C\subseteq \acl_{\mathcal{L}}^{\mathfrak{C}}(A)$, $C$ and $B$ are normal over $A$. Then
$$\xymatrix{1 \ar[r] & \aut_{\mathcal{L}}(C/B) \ar[r]^{\subseteq} & \aut_{\mathcal{L}}(C/A) \ar[r]^{|_B} & \aut_{\mathcal{L}}(B/A) \ar[r] & 1}$$
is an exact sequence and hence $\aut_{\mathcal{L}}(C/B)\trianglelefteqslant \aut_{\mathcal{L}}(C/A)$.
\end{fact}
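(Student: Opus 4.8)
The plan is to prove exactness at each of the three nontrivial spots, all of it powered by one structural principle: since $T$ eliminates quantifiers and $\mathfrak{C}$ is strongly homogeneous, every $\mathcal{L}$-automorphism of a small subset of $\acl_{\mathcal{L}}^{\mathfrak{C}}(A)$ fixing $A$ pointwise is partial elementary over $A$ and therefore lifts to a global automorphism in $\aut_{\mathcal{L}}(\mathfrak{C}/A)$. Indeed, if $\sigma$ is such an automorphism of a small substructure $D\subseteq\acl_{\mathcal{L}}^{\mathfrak{C}}(A)$ and $d$ enumerates $D$, then $\sigma$ preserves quantifier-free formulas over $A$, so by quantifier elimination $\tp_{\mathcal{L}}^{\mathfrak{C}}(d/A)=\tp_{\mathcal{L}}^{\mathfrak{C}}(\sigma(d)/A)$, and strong homogeneity yields $\tilde\sigma\in\aut_{\mathcal{L}}(\mathfrak{C}/A)$ with $\tilde\sigma|_D=\sigma$.

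First I would deal with the leftmost map. The inclusion $\aut_{\mathcal{L}}(C/B)\subseteq\aut_{\mathcal{L}}(C/A)$ is well defined, because fixing $B$ pointwise forces fixing $A\subseteq B$ pointwise, and it is trivially injective; this is exactness at $\aut_{\mathcal{L}}(C/B)$. Next I would verify that the restriction map $\sigma\mapsto\sigma|_B$ is well defined, i.e. that $\sigma(B)=B$ for every $\sigma\in\aut_{\mathcal{L}}(C/A)$. Here the extension principle enters: lift $\sigma$ to $\tilde\sigma\in\aut_{\mathcal{L}}(\mathfrak{C}/A)$ with $\tilde\sigma|_C=\sigma$; normality of $B$ over $A$ gives $\tilde\sigma(B)\subseteq B$, hence $\sigma(B)\subseteq B$, and applying the same to $\sigma^{-1}\in\aut_{\mathcal{L}}(C/A)$ yields $\sigma(B)=B$. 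Thus $\sigma|_B\in\aut_{\mathcal{L}}(B/A)$ and restriction is a group homomorphism.

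Exactness at $\aut_{\mathcal{L}}(C/A)$ is then immediate from the definitions: $\sigma|_B=\id_B$ holds precisely when $\sigma$ fixes $B$ pointwise, that is, when $\sigma\in\aut_{\mathcal{L}}(C/B)$, so $\ker(|_B)=\aut_{\mathcal{L}}(C/B)$. Being the kernel of a homomorphism, $\aut_{\mathcal{L}}(C/B)$ is normal in $\aut_{\mathcal{L}}(C/A)$, which gives the ``hence'' clause. For surjectivity, and hence exactness at $\aut_{\mathcal{L}}(B/A)$, take $\tau\in\aut_{\mathcal{L}}(B/A)$; since $B\subseteq\acl_{\mathcal{L}}^{\mathfrak{C}}(A)$, the extension principle lifts $\tau$ to $\tilde\tau\in\aut_{\mathcal{L}}(\mathfrak{C}/A)$, and normality of $C$ over $A$ gives $\tilde\tau(C)=C$, so $\tilde\tau|_C\in\aut_{\mathcal{L}}(C/A)$ restricts to $\tau$ on $B$.

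The only genuine content is the extension principle tying together quantifier elimination and strong homogeneity; once it is in hand, normality of $B$ and of $C$ does all the remaining work of keeping these sets setwise invariant, and the rest is bookkeeping. The one point where care is needed is the well-definedness of restriction, namely establishing $\sigma(B)=B$ rather than merely $\sigma(B)\subseteq\acl_{\mathcal{L}}^{\mathfrak{C}}(A)$; this is exactly where the hypothesis that $B$ (and not only $C$) is normal over $A$ is indispensable.
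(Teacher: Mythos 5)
The paper offers no proof to compare against: it records this statement as a Fact imported from Corollary 7 of \cite{invitation}, and explicitly describes such facts as ``exercises to the above definition of a Galois extension.'' Your argument is correct and is precisely the standard one: the lifting principle (quantifier elimination makes any automorphism over $A$ of a small substructure of $\acl_{\mathcal{L}}^{\mathfrak{C}}(A)$ a partial elementary map, which $\kappa$-strong homogeneity extends to an element of $\aut_{\mathcal{L}}(\mathfrak{C}/A)$) is the only substantive ingredient, and normality of $B$, resp.\ of $C$, then yields well-definedness of the restriction map, resp.\ its surjectivity, with exactness at the middle term being definitional. The single elision is in the surjectivity step: normality literally gives $\tilde\tau(C)\subseteq C$, and the equality $\tilde\tau(C)=C$ requires the same inverse trick you spelled out for $B$; this is trivial to fill.
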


\begin{fact}[Fact 3.20 in \cite{Hoff3}]\label{fact314}
Assume that $A\subseteq C$ is a Galois extension and $A\subseteq B=\dcl_{\mathcal{L}}^{\mathfrak{C}}(B)\subseteq C$. The extension $A\subseteq B$ is Galois if and only if
$$\aut_{\mathcal{L}}(C/B)\trianglelefteqslant \aut_{\mathcal{L}}(C/A).$$
\end{fact}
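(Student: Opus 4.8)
The plan is to notice first that, among the defining conditions of a Galois extension, the only one not automatically in force for $A\subseteq B$ is the normality of $B$ over $A$. Indeed $A=\dcl_{\mathcal{L}}^{\mathfrak{C}}(A)$ and $B=\dcl_{\mathcal{L}}^{\mathfrak{C}}(B)$ are hypotheses, while $A\subseteq B\subseteq C\subseteq\acl_{\mathcal{L}}^{\mathfrak{C}}(A)$ is inherited from the fact that $A\subseteq C$ is Galois. So the statement reduces to the equivalence ``$B$ is normal over $A$ $\iff$ $\aut_{\mathcal{L}}(C/B)\trianglelefteqslant\aut_{\mathcal{L}}(C/A)$''.

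For the left-to-right implication there is essentially nothing to do. If $A\subseteq B$ is Galois then $B$ is normal over $A$, so $A\subseteq B\subseteq C\subseteq\acl_{\mathcal{L}}^{\mathfrak{C}}(A)$ with both $B$ and $C$ normal over $A$; Fact \ref{fact.exact} then exhibits $\aut_{\mathcal{L}}(C/B)$ as the kernel of the restriction homomorphism $\aut_{\mathcal{L}}(C/A)\to\aut_{\mathcal{L}}(B/A)$, whence it is normal.

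The content lies in the converse. First I would record the Galois-correspondence identity
$$\fix_C\big(\aut_{\mathcal{L}}(C/B)\big)=B.$$
The inclusion $\supseteq$ is immediate. For $\subseteq$, observe that $C$ is normal over $B$ (since $\aut_{\mathcal{L}}(\mathfrak{C}/B)\subseteq\aut_{\mathcal{L}}(\mathfrak{C}/A)$ and $C$ is normal over $A$), so by strong homogeneity of $\mathfrak{C}$ the restriction map $\aut_{\mathcal{L}}(\mathfrak{C}/B)\to\aut_{\mathcal{L}}(C/B)$ is onto; hence a point of $C$ fixed by $\aut_{\mathcal{L}}(C/B)$ is fixed by all of $\aut_{\mathcal{L}}(\mathfrak{C}/B)$, and therefore lies in $\fix_{\mathfrak{C}}\big(\aut_{\mathcal{L}}(\mathfrak{C}/B)\big)=\dcl_{\mathcal{L}}^{\mathfrak{C}}(B)=B$, the last equalities being the standard description of $\dcl$ in a monster model with elimination of imaginaries. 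Now assume $\aut_{\mathcal{L}}(C/B)\trianglelefteqslant\aut_{\mathcal{L}}(C/A)$ and take any $\sigma\in\aut_{\mathcal{L}}(\mathfrak{C}/A)$; since $C$ is normal over $A$, $\sigma$ restricts to some $\bar{\sigma}\in\aut_{\mathcal{L}}(C/A)$. For $b\in B$ and $\tau\in\aut_{\mathcal{L}}(C/B)$ one computes $\tau\bar{\sigma}(b)=\bar{\sigma}\big(\bar{\sigma}^{-1}\tau\bar{\sigma}\big)(b)$, and $\bar{\sigma}^{-1}\tau\bar{\sigma}\in\aut_{\mathcal{L}}(C/B)$ by normality, so it fixes $b$; thus $\bar{\sigma}(b)\in\fix_C(\aut_{\mathcal{L}}(C/B))=B$. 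This gives $\sigma(B)=\bar{\sigma}(B)\subseteq B$ for every $\sigma\in\aut_{\mathcal{L}}(\mathfrak{C}/A)$, i.e. $B$ is normal over $A$, and hence $A\subseteq B$ is Galois.

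I expect the main obstacle to be the identity $\fix_C(\aut_{\mathcal{L}}(C/B))=B$: the rest is a formal manipulation of the conjugation action, but this identity is precisely where the hypothesis $B=\dcl_{\mathcal{L}}^{\mathfrak{C}}(B)$, the normality of $C$ over $A$, and the strong homogeneity and elimination of imaginaries of $\mathfrak{C}$ must be combined, and it is the ingredient on which the converse rests.
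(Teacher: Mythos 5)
Your proof is correct, and there is nothing in the paper to compare it against line by line: by the paper's stated convention, anything labelled ``Fact'' is quoted from earlier work (this one is Fact 3.20 of \cite{Hoff3}), and the surrounding text explicitly offers these Galois-extension facts as exercises on Definition \ref{galois.ext.def}. Your argument is exactly the intended standard one: reduce to normality of $B$ over $A$, get the forward direction from the exact sequence of Fact \ref{fact.exact}, and prove the converse via the fixed-point identity $\fix_C\big(\aut_{\mathcal{L}}(C/B)\big)=B$ followed by the conjugation computation. One correction to your closing remark, though: elimination of imaginaries is not used anywhere in your argument, and it had better not be, since Fact \ref{fact314} appears in the paper \emph{before} EI is assumed (EI is introduced only afterwards, for the full Galois correspondence of Fact \ref{galois.correspondence}, and specifically for the composite $\alpha\circ\beta=\id$, which you never invoke). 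What your fixed-point identity actually needs is: saturation and strong homogeneity of $\mathfrak{C}$, which give $\fix_{\mathfrak{C}}\big(\aut_{\mathcal{L}}(\mathfrak{C}/B)\big)=\dcl_{\mathcal{L}}^{\mathfrak{C}}(B)$ for small $B$ with no imaginaries involved, together with quantifier elimination (in force throughout this section), which is what makes an $\mathcal{L}$-automorphism of the substructure $C$ fixing $B$ a partial elementary map and hence, by strong homogeneity, the restriction of an element of $\aut_{\mathcal{L}}(\mathfrak{C}/B)$; normality of $C$ over $B$, applied to both $\sigma$ and $\sigma^{-1}$, is what makes that restriction map land in $\aut_{\mathcal{L}}(C/B)$ in the first place. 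With the hypotheses relabelled this way, your proof is exactly right.
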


From now we assume that $T$ additionally admits \textbf{elimination of imaginaries}.

\begin{fact}[The Galois correspondence]\label{galois.correspondence}
Let $A\subseteq C$ be a Galois extension, introduce
$$\mathcal{B}:=\lbrace B\;|\;A\subseteq B=\dcl_{\mathcal{L}}^{\mathfrak{C}}(B)\subseteq C\rbrace,$$
$$\mathcal{H}:=\lbrace H\;|\; H\leqslant\aut_{\mathcal{L}}(C/A)\text{ is closed}\rbrace.$$
Then $\alpha(B):=\aut_{\mathcal{L}}(C/B)$ is a mapping between $\mathcal{B}$ and $\mathcal{H}$, $\beta(H):=C^H$ is a mapping between $\mathcal{H}$ and $\mathcal{B}$ and it follows
$$\alpha\circ\beta=\id,\qquad\beta\circ\alpha=\id.$$
\end{fact}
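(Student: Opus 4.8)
The plan is to prove the Galois correspondence by the standard route, using quantifier elimination and elimination of imaginaries to fill in the steps that are automatic in the classical field case. First I would check that $\alpha$ and $\beta$ are well-defined maps between $\mathcal{B}$ and $\mathcal{H}$. For $\alpha$: given an intermediate $B$ with $A\subseteq B=\dcl_{\mathcal{L}}^{\mathfrak{C}}(B)\subseteq C$, the group $\aut_{\mathcal{L}}(C/B)$ is a subgroup of $\aut_{\mathcal{L}}(C/A)$, and it is closed in the usual profinite (pointwise-convergence) topology because it is an intersection of stabilizers $\{\sigma\mid \sigma(b)=b\}$ over $b\in B$, each of which is closed. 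For $\beta$: given a closed $H\leqslant\aut_{\mathcal{L}}(C/A)$, I need $A\subseteq C^H=\dcl_{\mathcal{L}}^{\mathfrak{C}}(C^H)\subseteq C$. The containment $A\subseteq C^H$ holds because $H$ fixes $A$ pointwise (as $H\leqslant\aut_{\mathcal{L}}(C/A)$), and $C^H$ is definably closed inside $C$ since $\dcl$ commutes with the automorphisms in $H$; here elimination of imaginaries is what guarantees $C^H$ is genuinely a substructure whose $\dcl$ is controlled, and $C^H\subseteq C\subseteq\acl_{\mathcal{L}}^{\mathfrak{C}}(A)$ keeps everything small and algebraic over $A$.

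Next I would prove $\alpha\circ\beta=\id$, i.e. for closed $H$, $\aut_{\mathcal{L}}(C/C^H)=H$. The inclusion $H\subseteq\aut_{\mathcal{L}}(C/C^H)$ is immediate from the definition of $C^H$. For the reverse inclusion I would argue that $H$ is dense in $\aut_{\mathcal{L}}(C/C^H)$ and then use closedness of $H$: given $\sigma$ fixing $C^H$ and a basic open neighbourhood determined by a finite tuple $\bar c\in C$, I must find $h\in H$ agreeing with $\sigma$ on $\bar c$. This is where the structural input enters. Passing to the finite Galois subextension $C^H\subseteq \dcl_{\mathcal{L}}^{\mathfrak{C}}(C^H\bar c)=:D$, the image $H|_D$ is a finite group of automorphisms of $D$ over $C^H$, and by elimination of imaginaries together with the exact sequence of Fact \ref{fact.exact}, one shows $D^{H|_D}=C^H$ forces $H|_D=\aut_{\mathcal{L}}(D/C^H)$; then $\sigma|_D$ must lie in $H|_D$, yielding the required $h$.

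Finally I would prove $\beta\circ\alpha=\id$, i.e. for intermediate $B$, $C^{\aut_{\mathcal{L}}(C/B)}=B$. The inclusion $B\subseteq C^{\aut_{\mathcal{L}}(C/B)}$ is definitional. For the reverse inclusion, suppose $c\in C\setminus B$; I must produce an automorphism in $\aut_{\mathcal{L}}(C/B)$ moving $c$. Since $B=\dcl_{\mathcal{L}}^{\mathfrak{C}}(B)$ and $c\notin B$, the type $\tp_{\mathcal{L}}^{\mathfrak{C}}(c/B)$ has at least two distinct realizations in $\acl_{\mathcal{L}}^{\mathfrak{C}}(B)\supseteq C$ (using quantifier elimination to identify types with quantifier-free data, and elimination of imaginaries so that $c\notin\dcl$ genuinely means $c$ has a conjugate over $B$). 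By strong homogeneity of $\mathfrak{C}$, an automorphism fixing $B$ and sending $c$ to a conjugate exists; its restriction to the normal extension $C$ lands in $\aut_{\mathcal{L}}(C/B)$ because $C$ is normal over $A\subseteq B$, giving the desired element.

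The main obstacle I expect is the density/surjectivity argument in $\alpha\circ\beta=\id$: ensuring that a closed subgroup $H$ of the full Galois group is recovered exactly as the fixator of its fixed field requires that on every finite Galois piece the fixed-substructure-to-subgroup correspondence is already bijective, and that in turn rests essentially on elimination of imaginaries (so that fixed sets are honest definably closed substructures) and on the exact sequence of Fact \ref{fact.exact} to propagate the finite-level correspondence coherently through the inverse limit. In the field case this is classical; here the care is entirely in verifying that the model-theoretic hypotheses reproduce the finite normal-extension combinatorics.
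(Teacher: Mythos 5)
The paper offers no proof of Fact \ref{galois.correspondence}: by its stated convention, results labelled ``Fact'' are standard material, explicitly left to the reader as exercises, so your attempt has to be judged on its own merits. Your skeleton is the right one. The direction $\beta\circ\alpha=\id$ is essentially complete: for $c\in C\setminus B$ with $B=\dcl_{\mathcal{L}}^{\mathfrak{C}}(B)$, strong homogeneity gives $f\in\aut_{\mathcal{L}}(\mathfrak{C}/B)$ moving $c$, and normality of $C$ over $A$ gives $f(C)\subseteq C$ (apply the same to $f^{-1}$ to see $f(C)=C$, a point you omit). Well-definedness of $\alpha$ and $\beta$ is also fine, but your attributions are off: that $C^H$ is definably closed, and that automorphisms of $C$ over $A$ (or over $C^H$) extend to automorphisms of $\mathfrak{C}$, follow from quantifier elimination (isomorphisms between substructures are partial elementary maps) together with strong homogeneity; elimination of imaginaries plays no role in those steps.

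The genuine gap is in $\alpha\circ\beta=\id$, in two places. First, a concrete error: $D:=\dcl_{\mathcal{L}}^{\mathfrak{C}}(C^H\bar c)$ is not a ``finite Galois subextension'' --- it need not be normal over $C^H$, and an element $h\in H$ need not map $D$ into $D$ at all, since $h(\bar c)$ is a conjugate of $\bar c$ that can fall outside $D$; hence ``$H|_D$'' is not a well-defined group of automorphisms of $D$. One must first pass to a normal closure. Second, and decisively, the finite-level claim you reduce everything to --- a finite group of automorphisms of $D$ over $C^H$ whose fixed set is $C^H$ must be all of $\aut_{\mathcal{L}}(D/C^H)$ --- is precisely the crux, and you merely assert it, citing Fact \ref{fact.exact} (which does not yield it) and assigning elimination of imaginaries a role (``fixed sets are honest definably closed substructures'') that is not where it is needed. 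The missing idea, and the only point where elimination of imaginaries genuinely enters, is coding of finite sets: the orbit $S:=H\cdot\bar c$ is a finite subset of $C$ (finite because $\bar c\in C\subseteq\acl_{\mathcal{L}}^{\mathfrak{C}}(A)$), and by elimination of imaginaries it has a code $e\in\dcl_{\mathcal{L}}^{\mathfrak{C}}(S)\subseteq\dcl_{\mathcal{L}}^{\mathfrak{C}}(C)=C$ such that an automorphism of $\mathfrak{C}$ stabilizes $S$ setwise if and only if it fixes $e$. Since $H$ is a group, every $h\in H$ (extended to $\aut_{\mathcal{L}}(\mathfrak{C})$) stabilizes $S$ setwise, hence fixes $e$, so $e\in C^H$; then any $\sigma\in\aut_{\mathcal{L}}(C/C^H)$, extended to $\aut_{\mathcal{L}}(\mathfrak{C})$, fixes $e$, hence permutes $S$, so $\sigma(\bar c)=h(\bar c)$ for some $h\in H$. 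This is exactly the density statement you need (and it makes the finite-subextension detour unnecessary); closedness of $H$ in $\aut_{\mathcal{L}}(C/A)$ then gives $\sigma\in H$. Without this coding argument, your proof of $\alpha\circ\beta=\id$ does not close.
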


\begin{fact}[Fact 3.23 in \cite{Hoff3}]\label{fact317}
If $A\subseteq C$ is a Galois extension, then $\aut_{\mathcal{L}}(C/A)$ is a profinite group.
\end{fact}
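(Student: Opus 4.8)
The plan is to equip $G:=\aut_{\mathcal{L}}(C/A)$ with the Krull topology and to exhibit it as an inverse limit of finite groups, which is the standard way to recognise a profinite group. Concretely, I would take as a basis of open neighbourhoods of the identity the subgroups $\aut_{\mathcal{L}}(C/B)$, where $B$ ranges over the \emph{finite} Galois subextensions $A\subseteq B\subseteq C$ (those with $\aut_{\mathcal{L}}(B/A)$ finite), and then prove that the restriction maps induce a topological isomorphism $G\cong\varprojlim_B\aut_{\mathcal{L}}(B/A)$.

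The first substantive step is to produce enough finite Galois subextensions and to show they form a directed system with union $C$. For a finite tuple $\bar c\subseteq C$, let $O$ be the set of coordinates of all $\aut_{\mathcal{L}}(\mathfrak{C}/A)$-conjugates of $\bar c$; since $C\subseteq\acl_{\mathcal{L}}^{\mathfrak{C}}(A)$, the orbit of $\bar c$ is finite, so $O$ is finite, and since $C$ is normal and definably closed we have $O\subseteq C$. Put $B_{\bar c}:=\dcl_{\mathcal{L}}^{\mathfrak{C}}(A\cup O)\subseteq C$. For $\tau\in\aut_{\mathcal{L}}(\mathfrak{C}/A)$ one has $\tau(O)=O$, hence $\tau(B_{\bar c})=\dcl_{\mathcal{L}}^{\mathfrak{C}}(A\cup\tau(O))=B_{\bar c}$, so $B_{\bar c}$ is normal over $A$; being also definably closed and contained in $\acl_{\mathcal{L}}^{\mathfrak{C}}(A)$, it is a Galois extension of $A$. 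Moreover any $\sigma\in\aut_{\mathcal{L}}(B_{\bar c}/A)$ extends, by strong homogeneity of $\mathfrak{C}$, to an automorphism fixing $A$, hence permutes the finite set $O$, and is determined by this permutation because $B_{\bar c}=\dcl_{\mathcal{L}}^{\mathfrak{C}}(A\cup O)$; thus $\aut_{\mathcal{L}}(B_{\bar c}/A)$ embeds into $\mathrm{Sym}(O)$ and is finite. Since $B_{\bar c_1},B_{\bar c_2}\subseteq B_{\bar c_1\bar c_2}$ and every $c\in C$ lies in $B_c$, these finite Galois subextensions are directed with union $C$.

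The second step assembles the inverse limit. By Fact \ref{fact.exact}, for each finite Galois $B$ the restriction map $G\to\aut_{\mathcal{L}}(B/A)$ is surjective with kernel $\aut_{\mathcal{L}}(C/B)$, and these restrictions are compatible along the directed system, so they induce a continuous homomorphism $G\to\varprojlim_B\aut_{\mathcal{L}}(B/A)$. This map is injective because $C=\bigcup_B B$, so an automorphism trivial on every $B$ is trivial, and it is surjective because a compatible family $(\sigma_B)_B$ glues to a single automorphism of $C$ over $A$: the gluing is well defined by compatibility, bijective as a directed union of bijections, and structure-preserving since every atomic formula involves only finitely many elements, all lying in some $B$. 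As the inverse limit topology matches the Krull topology under this identification, $G$ is an inverse limit of finite discrete groups, hence profinite.

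The main obstacle I expect is the bookkeeping around the finite Galois subextensions, namely verifying that $B_{\bar c}$ is genuinely normal (which hinges on $\tau(O)=O$) and definably closed while staying inside $C$, together with the finiteness of $\aut_{\mathcal{L}}(B_{\bar c}/A)$. Once this is in place, the surjectivity of restriction is handed to us by Fact \ref{fact.exact}, and the final passage ``an inverse limit of finite groups is profinite'' is routine.
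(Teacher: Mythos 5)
Your proof is correct and is the standard argument: constructing the finite Galois subextensions $B_{\bar c}=\dcl_{\mathcal{L}}^{\mathfrak{C}}(A\cup O)$ from finite orbit sets (using $C\subseteq\acl_{\mathcal{L}}^{\mathfrak{C}}(A)$, normality, definable closedness of $C$, and quantifier elimination plus strong homogeneity to extend automorphisms and bound $\aut_{\mathcal{L}}(B_{\bar c}/A)$ inside a finite symmetric group), and then identifying $\aut_{\mathcal{L}}(C/A)$ with the inverse limit of the finite groups $\aut_{\mathcal{L}}(B_{\bar c}/A)$ via the surjective restriction maps supplied by Fact \ref{fact.exact}. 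The paper itself does not reprove this statement (it is recalled as Fact 3.23 of \cite{Hoff3} and explicitly offered to the reader as an exercise on the definition of a Galois extension), and your inverse-limit argument is precisely the expected proof.
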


\begin{definition}
For a small subset $A$ of $\mathfrak{C}$ we define the \emph{absolute Galois group} of $A$:
$$\mathcal{G}(A):=\aut_{\mathcal{L}}\big(\acl_{\mathcal{L}}^{\mathfrak{C}}(A)/\dcl_{\mathcal{L}}^{\mathfrak{C}}(A) \big).$$
\end{definition}

The following lemma is a smooth generalization of Lemma 3.24 in \cite{Hoff3}. The original proof of \cite[Lemma 3.24]{Hoff3} still works well for items (1)-(3).

\begin{lemma}\label{N_Galois}
Assume that $N$ is a small definably closed $\mathcal{L}$-substructure of $\mathfrak{C}$ equipped with a $G$-action $(\tau_g)_{g\in G}$. 
Let $i:G\to\aut_{\mathcal{L}}(N/N^G)$ be given by $i(g):=\tau_g$.
\begin{enumerate}
\item If for every $b\in N$ the orbit $G\cdot b$ is definable, then $N^G\subseteq N$ is normal.

\item
If $N\subseteq\acl_{\mathcal{L}}^{\mathfrak{C}}(N^G)$, then $N^G\subseteq N$ is a Galois extension.

\item
If $G$ is finite, then $N\subseteq\acl_{\mathcal{L}}^{\mathfrak{C}}(N^G)$, hence also the second point follows.

\item (Artin's theorem)
If $G$ is finite and the $G$-action $(\tau_g)_{g\in G}$ is faithful, then 
$i:G \cong Aut_{\mathcal{L}}(N/N^G)$.

\item
Assume that $G$ is profinite, the $G$-action $(\tau_g)_{g\in G}$ is faithful and for every $m\in N$ the stabiliser $\stab(m)=\lbrace g\in G\;|\;\tau_g(m)=m\rbrace$ is an open subgroup of $G$. Then $N^G\subseteq N$ is Galois and
$i: G\cong Aut_{\mathcal{L}}(N/N^G)$ (as profinite groups).
\end{enumerate}
\end{lemma}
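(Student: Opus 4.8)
The plan is to establish the two assertions in turn: that $N^G\subseteq N$ is a Galois extension, and that $i$ is an isomorphism of profinite groups. For the first, I would reduce to item (2) by verifying $N\subseteq\acl_{\mathcal{L}}^{\mathfrak{C}}(N^G)$. Fix $m\in N$. As $\stab(m)$ is open in the profinite group $G$, it has finite index, so the orbit $G\cdot m$ is finite. Being a finite $G$-invariant subset of the definably closed set $N$, it admits, by elimination of imaginaries, a canonical parameter (code) $c$; since the orbit lies in $N$ we have $c\in\dcl_{\mathcal{L}}^{\mathfrak{C}}(G\cdot m)\subseteq N$, and since each $\tau_g$ fixes $G\cdot m$ setwise and is an $\mathcal{L}$-automorphism of $N$, it fixes the code, $\tau_g(c)=c$, so $c\in N^G$. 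As $m$ belongs to the finite set coded by $c$, we get $m\in\acl_{\mathcal{L}}^{\mathfrak{C}}(c)\subseteq\acl_{\mathcal{L}}^{\mathfrak{C}}(N^G)$; this is exactly the reasoning behind item (3). Each orbit is finite, hence definable, so items (1) and (2) apply and yield that $N^G\subseteq N$ is Galois. In particular, by Fact \ref{fact317}, $\aut_{\mathcal{L}}(N/N^G)$ is a profinite group.

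For the second assertion I would first record that $i$ is an injective continuous homomorphism. Injectivity is precisely faithfulness of the $G$-action. For continuity it suffices to check it at the identity: a basic open neighborhood of $\id$ in the Krull topology of $\aut_{\mathcal{L}}(N/N^G)$ has the form $\aut_{\mathcal{L}}(N/\langle F\rangle_{\mathcal{L}})$ for a finite $F\subseteq N$, and
$$i^{-1}\big(\aut_{\mathcal{L}}(N/\langle F\rangle_{\mathcal{L}})\big)=\bigcap_{m\in F}\stab(m)$$
is a finite intersection of open subgroups, hence open.

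Finally I would obtain surjectivity by a compactness argument combined with the Galois correspondence. Since $G$ is compact and $\aut_{\mathcal{L}}(N/N^G)$ is Hausdorff, the image $i(G)$ is compact, so it is a closed subgroup and $i\colon G\to i(G)$ is a homeomorphism. It remains to see $i(G)=\aut_{\mathcal{L}}(N/N^G)$. The fixed structure of $i(G)$ is $N^{i(G)}=N^G$ by the very definition of $N^G$, and passing to the closure does not change fixed points, since the stabilizer of each element is closed; thus $N^{\overline{i(G)}}=N^G=N^{\aut_{\mathcal{L}}(N/N^G)}$. Because the Galois correspondence of Fact \ref{galois.correspondence} is a bijection between closed subgroups and intermediate definably closed substructures, the two closed subgroups $\overline{i(G)}$ and $\aut_{\mathcal{L}}(N/N^G)$, having the same fixed structure, coincide; as $i(G)$ is already closed, $i(G)=\aut_{\mathcal{L}}(N/N^G)$, and $i$ is the desired topological isomorphism.

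The step I expect to demand the most care is this last one, where the topological content really lies. I must ensure that the profinite topology on $\aut_{\mathcal{L}}(N/N^G)$ furnished by Fact \ref{fact317} agrees with the Krull topology, so that ``closed subgroup'' in the Galois correspondence coincides with ``compact image'' and so that fixing a given element is a closed condition. Granting this compatibility, compactness upgrades the soft equality of fixed structures $N^{i(G)}=N^G$ into genuine surjectivity, which is the real substance beyond the routine injectivity and continuity.
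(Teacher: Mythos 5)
Your proof is correct, but it takes a genuinely different route from the paper's, and it is worth recording what each approach buys. The paper proves item (4) first, by exactly the argument you use for surjectivity: the finite (hence closed) image $i(G)$ has the same fixed structure as $\aut_{\mathcal{L}}(N/N^G)$, so Fact \ref{galois.correspondence} forces equality. It then proves item (5) following Fried--Jarden: reduce to the finite quotients $G/\mathcal{N}$ acting on finitely generated pieces $N_0=\dcl_{\mathcal{L}}^{\mathfrak{C}}(N^G,G\cdot m_1,\ldots,G\cdot m_n)$, apply (4) there to get finite Galois subextensions with $\aut_{\mathcal{L}}(N_0/N^G)\cong G/\mathcal{N}$, and then assemble $i$ as an inverse limit of these finite-level isomorphisms, citing \cite{ribzal} for surjectivity of the limit maps. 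You instead obtain $N\subseteq\acl_{\mathcal{L}}^{\mathfrak{C}}(N^G)$ directly (open stabilizer $\Rightarrow$ finite orbit $\Rightarrow$ a code for the orbit lying in $N^G$, by elimination of imaginaries), reduce Galois-ness to items (1)--(2), and then upgrade the item-(4) argument from finite image to compact image: continuity of $i$, which you derive from openness of stabilizers, makes $i(G)$ compact, hence closed, and the Galois correspondence finishes. Your route is shorter, avoids the inverse-limit bookkeeping entirely, and in fact subsumes item (4), which the proposal never addresses explicitly: a finite group with the discrete topology satisfies all hypotheses of (5), and since your proof of (5) nowhere invokes (4) there is no circularity — you should state this. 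Both proofs rest on the same topological input, which you rightly flag: the profinite topology furnished by Fact \ref{fact317} is the Krull topology of pointwise stabilizers, which is also what ``closed'' means in Fact \ref{galois.correspondence} and what the paper uses when it asserts continuity of its restriction maps $f_\alpha$. One small point to make explicit: to conclude that $\tau_g$ fixes the code $c$ of the finite orbit, you need $\tau_g$ to extend to an automorphism of $\mathfrak{C}$ (available by quantifier elimination and strong homogeneity), since the defining property of a code refers to automorphisms of the monster, not of $N$.
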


\begin{proof}
We only need to prove items (4) and (5). We start with the proof of the item (4). Since the group $G$ acts faithfully, we can embed $G$ into $Aut_{\mathcal{L}}(N/N^G)$. By item (3) we know that $N^G\subseteq N$ is Galois. By Fact \ref{fact317}, the group $Aut_{\mathcal{L}}(N/N^G)$ is profinite. Therefore the image of $G$, $i(G)$, which is finite, is a closed subgroup. We have 
$$Aut_{\mathcal{L}}(N/N^G)=N^G=Aut_{\mathcal{L}}(N/N^{i(G)}),$$
hence, by Fact \ref{galois.correspondence}, it follows that $Aut_{\mathcal{L}}(N/N^G)=i(G)\cong G$.

We move now to the proof of the item (5). Our proof is based on the proof of Lemma 1.3.2 in \cite{FrJa}, which is the same result, but for the theory $T=$ACF. Our assumptions assure us that $i:G\to\aut_{\mathcal{L}}(N/N^G)$ is an embedding of groups. We need to show that $i$ is continuous and onto, and that $N^G\subseteq N$ is Galois.

Let $m_1,\ldots,m_n\in N$ be arbitrary. The subgroup $H=\stab(m_1)\cap\ldots\cap \stab(m_n)$ is open in $G$, hence $\mathcal{N}$, equal to the intersection of all conjugates of $H$ in $G$, is open (and therefore closed and of finite index). Consider a $G/\mathcal{N}$-action on $N_0:=\dcl_{\mathcal{L}}^{\mathfrak{C}}(N^G,G\cdot m_1,\ldots,G\cdot m_n)$ given by $g(a)=i(g)|_{N_0}(a)$. It is faithful and $N_0^{G/\mathcal{N}}=N^G$. Note that $G/\mathcal{N}$ is finite, hence we obtain that $N^G\subseteq N_0$ is Galois and $G/\mathcal{N}\cong\aut_{\mathcal{L}}(N_0/N^G)$, where the isomorphism is given by $g\mathcal{N}\mapsto i(g)|_{N_0}$.

Since each $m\in N$ belongs to some $N_0$, we conclude that $N^G\subseteq N$ is
normal and $N\subseteq \acl_{\mathcal{L}}^{\mathfrak{C}}(N^G)$. Oviously $\dcl_{\mathcal{L}}^{\mathfrak{C}}(N^G)=N^G$, hence $N^G\subseteq N$ is Galois.

Now, we will show that $i$ is an isomorphism of profinite groups. Let $\{N_{\alpha}\;|\;\alpha<\beta\}$ be the set of all finite Galois extensions of $N^G$, which are of the form of $N_0$. The collection of finite groups $\aut_{\mathcal{L}}(N_{\alpha}/N^G)$ with restriction maps form an inverse system, therefore we can speak about its limit, $\big(\lim\limits_{\leftarrow_{\alpha}}\aut_{\mathcal{L}}(N_{\alpha}/N^G),\;\pi_{\alpha}\big)$. Note that
$$f:=\lim\limits_{\leftarrow_{\alpha}}f_{\alpha}:\aut_{\mathcal{L}}(N/N^G)\to\lim\limits_{\leftarrow_{\alpha}}\aut_{\mathcal{L}}(N_{\alpha}/N^G),$$
where $f_{\alpha}:\aut_{\mathcal{L}}(N/N^G)\to\aut_{\mathcal{L}}(N_{\alpha}/N^G)$, $f_{\alpha}(\sigma):=\sigma|_{N_{\alpha}}$, is a continuous isomorphism of groups (it is onto by Corollary 1.1.6 in \cite{ribzal}, it is one-to-one, because family $N_{\alpha}$ covers the whole $N$), hence it is also an homeomorphism and an isomorphism of profinite groups.

On the other hand, 
$$h:=\lim\limits_{\leftarrow_{\alpha}}h_{\alpha}:G\to\lim\limits_{\leftarrow_{\alpha}}\aut_{\mathcal{L}}(N_{\alpha}/N^G),$$
where $h_{\alpha}:G\to\aut(N_{\alpha}/N^G)$, $h_{\alpha}(g):=i(g)|_{N_{\alpha}}$, is (by Corollary 1.1.6 in \cite{ribzal} and the previous part of the proof of this point) a continuous epimorphism of groups. Hence $f^{-1}h:G\to\aut_{\mathcal{L}}(N/N^G)$ is a continuous epimorphism of groups. To finish the proof, observe that for each $g\in G$ we have $(f^{-1}h)(g)=i(g)$.
\end{proof}

Now, we add one more, but a stronger assumption: \textbf{$T$ is stable}.
The following facts help in a better understanding what exactly, in the terms of ``standard" model theory, is regularity. We will see in a moment that the notion of regularity is an algebraic way to express stationarity (check Remark 3.2.(1) in \cite{Hoff3} or Definition 5.17 in \cite{silvain002}). Moreover, in Lemma \ref{regular.Galois} we bind together regularity/stationarity and surjectivity of a restriction map on the level of absolute Galois groups.

\begin{fact}[Fact 3.34 in \cite{Hoff3}]\label{regular}
Let $E,A\subseteq\mathfrak{C}$, $A$ be $\mathcal{L}$-regular over $E$, $f_1,f_2\in\aut_{\mathcal{L}}(\mathfrak{C})$ and let $f_1|_E=f_2|_E$. Then there exists $h\in\aut_{\mathcal{L}}(\mathfrak{C})$ such that $h|_A=f_1|_A$ and $h|_{\acl_{\mathcal{L}}^{\mathfrak{C}}(E)}=f_2|_{\acl_{\mathcal{L}}^{\mathfrak{C}}(E)}$.
\end{fact}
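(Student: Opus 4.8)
The plan is to reduce the two-sided matching problem to a one-sided one and then to feed it into the stationarity content hidden in the regularity hypothesis. First I would set $B:=\acl_{\mathcal{L}}^{\mathfrak{C}}(E)$ and $\sigma:=f_2^{-1}\circ f_1\in\aut_{\mathcal{L}}(\mathfrak{C})$. Since $f_1|_E=f_2|_E$, the automorphism $\sigma$ fixes $E$ pointwise, hence fixes $\dcl_{\mathcal{L}}^{\mathfrak{C}}(E)$ pointwise. It then suffices to produce $g\in\aut_{\mathcal{L}}(\mathfrak{C}/B)$ with $g|_A=\sigma|_A$: setting $h:=f_2\circ g$, one gets $h|_B=f_2|_B=f_2|_{\acl_{\mathcal{L}}^{\mathfrak{C}}(E)}$ and, for $a\in A$, $h(a)=f_2(\sigma(a))=f_1(a)$, exactly as required.

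To build such a $g$, I would define the partial map $\psi$ on $A\cup B$ by $\psi|_A:=\sigma|_A$ and $\psi|_B:=\id_B$ and show it is a partial elementary map; strong homogeneity of $\mathfrak{C}$ then extends $\psi$ to the desired automorphism $g$ (here $A\cup B$ is small). Well-definedness is where regularity first enters: any element of $A\cap B$ lies in $\dcl_{\mathcal{L}}^{\mathfrak{C}}(EA)\cap\acl_{\mathcal{L}}^{\mathfrak{C}}(E)$, which by the regularity of $A$ over $E$ equals $\dcl_{\mathcal{L}}^{\mathfrak{C}}(E)$, and $\sigma$ fixes $\dcl_{\mathcal{L}}^{\mathfrak{C}}(E)$ pointwise, so $\psi|_A$ and $\psi|_B$ agree on the overlap. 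Because $\psi$ fixes $B$ pointwise, its elementarity is equivalent to the single type equation $\tp_{\mathcal{L}}^{\mathfrak{C}}(A/B)=\tp_{\mathcal{L}}^{\mathfrak{C}}(\sigma(A)/B)$.

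It remains to establish this type equation, which is the crux. I start from $\tp_{\mathcal{L}}^{\mathfrak{C}}(A/E)=\tp_{\mathcal{L}}^{\mathfrak{C}}(\sigma(A)/E)$, which holds because $\sigma\in\aut_{\mathcal{L}}(\mathfrak{C}/E)$. By the standard property of stable theories that a set is independent from the algebraic closure of its base, one has $A\ind_E B$ and $\sigma(A)\ind_E B$, so both $\tp_{\mathcal{L}}^{\mathfrak{C}}(A/B)$ and $\tp_{\mathcal{L}}^{\mathfrak{C}}(\sigma(A)/B)$ are nonforking extensions of the same type $\tp_{\mathcal{L}}^{\mathfrak{C}}(A/E)$. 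The regularity hypothesis is precisely the algebraic incarnation of stationarity of $\tp_{\mathcal{L}}^{\mathfrak{C}}(A/E)$ (Remark 3.2.(1) in \cite{Hoff3}), so this type has a unique nonforking extension over the algebraically closed set $B$; the two extensions above therefore coincide, giving $\tp_{\mathcal{L}}^{\mathfrak{C}}(A/B)=\tp_{\mathcal{L}}^{\mathfrak{C}}(\sigma(A)/B)$.

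The main obstacle is this last step: justifying that regularity of $A$ over $E$ really yields stationarity of $\tp_{\mathcal{L}}^{\mathfrak{C}}(A/E)$, i.e.\ uniqueness of the nonforking extension to $B=\acl_{\mathcal{L}}^{\mathfrak{C}}(E)$. With elimination of imaginaries this amounts to the canonical base of $\tp_{\mathcal{L}}^{\mathfrak{C}}(A/E)$ being contained in $\dcl_{\mathcal{L}}^{\mathfrak{C}}(E)$, which is exactly what the condition $\dcl_{\mathcal{L}}^{\mathfrak{C}}(EA)\cap\acl_{\mathcal{L}}^{\mathfrak{C}}(E)=\dcl_{\mathcal{L}}^{\mathfrak{C}}(E)$ encodes. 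Once this link between regularity and stationarity is in place, the remaining ingredients (the reduction to $\sigma$, well-definedness on the overlap, and the extension of $\psi$ by strong homogeneity) are routine bookkeeping.
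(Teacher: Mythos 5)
The paper itself gives no proof of this Fact (it is quoted from \cite{Hoff3}), so your proposal has to be judged on its own merits and against the source's development. On its own merits the architecture is sound: the reduction to $\sigma:=f_2^{-1}\circ f_1\in\aut_{\mathcal{L}}(\mathfrak{C}/E)$, the well-definedness of $\psi$ on the overlap via $A\cap\acl_{\mathcal{L}}^{\mathfrak{C}}(E)\subseteq\dcl_{\mathcal{L}}^{\mathfrak{C}}(EA)\cap\acl_{\mathcal{L}}^{\mathfrak{C}}(E)=\dcl_{\mathcal{L}}^{\mathfrak{C}}(E)$, the translation of elementarity of $\psi$ into the single type equality, the standard facts $A\ind_E\acl_{\mathcal{L}}^{\mathfrak{C}}(E)$ and $\sigma(A)\ind_E\acl_{\mathcal{L}}^{\mathfrak{C}}(E)$, and the extension by strong homogeneity are all correct. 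Granting that regularity of $E\subseteq EA$ implies stationarity of $\tp_{\mathcal{L}}^{\mathfrak{C}}(A/E)$ --- which is exactly Fact \ref{PACclaim}, i.e.\ Lemma 3.36 of \cite{Hoff3} --- the argument closes.

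The caveat is that this delegation carries a circularity risk, and your back-up justification of the delegated step has a genuine gap. In \cite{Hoff3} the present statement is Fact 3.34 and the regularity--stationarity equivalence is Lemma 3.36, proved \emph{after} it; indeed the direction you need (regularity implies stationarity) is an immediate consequence of Fact \ref{regular} itself: applying it with $f_1=f$, $f_2=\id$ shows that any two $E$-conjugate realizations of $\tp_{\mathcal{L}}^{\mathfrak{C}}(A/E)$ are already conjugate over $\acl_{\mathcal{L}}^{\mathfrak{C}}(E)$, which is stationarity. So the two statements are essentially equivalent, and deriving one from the other only relocates the work unless the equivalence is proved independently. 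Your canonical-base sketch is the right idea for such an independent proof, but as written it skips the essential containment: stationarity amounts to the canonical base $c$ of the strong type of $A$ over $E$ lying in $\dcl^{\eq}(E)$, whereas regularity speaks only of real elements of $\dcl_{\mathcal{L}}^{\mathfrak{C}}(EA)\cap\acl_{\mathcal{L}}^{\mathfrak{C}}(E)$. One must first establish $c\subseteq\dcl^{\eq}(EA)\cap\acl^{\eq}(E)$ --- the second containment is standard, and the first holds because every automorphism fixing $E\cup A$ pointwise fixes $\tp(A/\acl^{\eq}(E))$, hence fixes its global nonforking extension, hence fixes $c$ pointwise --- and only then use elimination of imaginaries together with regularity to collapse $c$ into $\dcl^{\eq}(E)$. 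Your phrase that the regularity condition ``exactly encodes'' $c\subseteq\dcl_{\mathcal{L}}^{\mathfrak{C}}(E)$ asserts the conclusion of this argument rather than supplying it; with that intermediate containment added, your route becomes a complete, self-contained proof.
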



\begin{fact}[Lemma 3.36 in \cite{Hoff3}]\label{PACclaim}
For a small set $E\subseteq\mathfrak{C}$ and a complete type $p$ over $E$ it follows:
\begin{IEEEeqnarray*}{rCl}
p\text{ is stationary} &\iff & (\forall A_0\models p)(E\subseteq EA_0\text{ is $\mathcal{L}$-regular}) \\
 &\iff & (\exists A_0\models p)(E\subseteq EA_0\text{ is $\mathcal{L}$-regular}).
\end{IEEEeqnarray*}
\end{fact}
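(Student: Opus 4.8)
The plan is to prove the statement as a cycle, observing that in the second ``$\iff$'' the implication from ``$\forall A_0$'' to ``$\exists A_0$'' is trivial (a type over a small set is realized in $\mathfrak{C}$), so it suffices to establish (a) stationarity $\Rightarrow$ regularity of $E\subseteq EA_0$ for every $A_0\models p$, and (b) regularity of $E\subseteq EA_0$ for some $A_0\models p$ $\Rightarrow$ stationarity. Throughout I would use the standard characterization of stationarity available under stability and elimination of imaginaries: since $\acl_{\mathcal{L}}^{\mathfrak{C}}(E)$ is algebraic over $E$, passing from $E$ to $\acl_{\mathcal{L}}^{\mathfrak{C}}(E)$ never forks, so $p$ is stationary precisely when it has a unique extension to $\acl_{\mathcal{L}}^{\mathfrak{C}}(E)$, i.e. when all realizations of $p$ share a single type over $\acl_{\mathcal{L}}^{\mathfrak{C}}(E)$. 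The task is then to translate this uniqueness into the algebraic equality $\dcl_{\mathcal{L}}^{\mathfrak{C}}(EA_0)\cap\acl_{\mathcal{L}}^{\mathfrak{C}}(E)=\dcl_{\mathcal{L}}^{\mathfrak{C}}(E)$ and back.

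For (a), assume $p$ is stationary, fix $A_0\models p$, and pick $c\in\dcl_{\mathcal{L}}^{\mathfrak{C}}(EA_0)\cap\acl_{\mathcal{L}}^{\mathfrak{C}}(E)$; I must show $c\in\dcl_{\mathcal{L}}^{\mathfrak{C}}(E)$. Under quantifier elimination and elimination of imaginaries I would write $c$ as the value of an $E$-definable function at a finite tuple from $A_0$, so that every $\sigma\in\aut_{\mathcal{L}}(\mathfrak{C}/E)$ sends $c$ to the value of the same function at $\sigma(A_0)$. If $c\notin\dcl_{\mathcal{L}}^{\mathfrak{C}}(E)$, there is such a $\sigma$ with $\sigma(c)\neq c$. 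But $\sigma(A_0)\models p$, so by stationarity $\tp_{\mathcal{L}}^{\mathfrak{C}}(\sigma(A_0)/\acl_{\mathcal{L}}^{\mathfrak{C}}(E))=\tp_{\mathcal{L}}^{\mathfrak{C}}(A_0/\acl_{\mathcal{L}}^{\mathfrak{C}}(E))$. Since $c\in\acl_{\mathcal{L}}^{\mathfrak{C}}(E)$ is a legitimate parameter, the formula over $\acl_{\mathcal{L}}^{\mathfrak{C}}(E)$ asserting ``the function takes the value $c$'' belongs to $\tp_{\mathcal{L}}^{\mathfrak{C}}(A_0/\acl_{\mathcal{L}}^{\mathfrak{C}}(E))$, hence also to $\tp_{\mathcal{L}}^{\mathfrak{C}}(\sigma(A_0)/\acl_{\mathcal{L}}^{\mathfrak{C}}(E))$, forcing the value at $\sigma(A_0)$ to be $c$; but that value is $\sigma(c)\neq c$, a contradiction. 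Therefore $c\in\dcl_{\mathcal{L}}^{\mathfrak{C}}(E)$, giving regularity.

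For (b), assume $E\subseteq EA_0$ is regular for some $A_0\models p$; I will show every $A_0'\models p$ has $\tp_{\mathcal{L}}^{\mathfrak{C}}(A_0'/\acl_{\mathcal{L}}^{\mathfrak{C}}(E))=\tp_{\mathcal{L}}^{\mathfrak{C}}(A_0/\acl_{\mathcal{L}}^{\mathfrak{C}}(E))$, which by the reduction above yields stationarity. Given $A_0'\models p$, strong homogeneity of $\mathfrak{C}$ supplies $\tau\in\aut_{\mathcal{L}}(\mathfrak{C}/E)$ with $\tau(A_0)=A_0'$. I then apply Fact \ref{regular} to the regular extension $E\subseteq EA_0$ with $f_1:=\tau$ and $f_2:=\id$: as $f_1|_E=f_2|_E$, there is $h\in\aut_{\mathcal{L}}(\mathfrak{C})$ with $h|_{EA_0}=\tau|_{EA_0}$ and $h|_{\acl_{\mathcal{L}}^{\mathfrak{C}}(E)}=\id$. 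Thus $h$ fixes $\acl_{\mathcal{L}}^{\mathfrak{C}}(E)$ pointwise while sending $A_0$ to $A_0'$, which witnesses the desired equality of types over $\acl_{\mathcal{L}}^{\mathfrak{C}}(E)$.

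The genuinely substantial step is (b), and inside it the invocation of Fact \ref{regular}: that fact is exactly the homogeneity statement permitting one to prescribe an automorphism's behaviour on $A_0$ and on $\acl_{\mathcal{L}}^{\mathfrak{C}}(E)$ independently, and it is the algebraic repackaging of stationarity that makes the equivalence go through. I expect the remaining points to be routine bookkeeping: representing $c$ by an $E$-definable function (legitimate under quantifier elimination and elimination of imaginaries), and confirming the cited stability fact that passage to $\acl_{\mathcal{L}}^{\mathfrak{C}}(E)$ is nonforking, so that stationarity is equivalent to uniqueness of the extension of $p$ to $\acl_{\mathcal{L}}^{\mathfrak{C}}(E)$.
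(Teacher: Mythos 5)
Your proof is correct: the reduction of stationarity to uniqueness of the extension to $\acl_{\mathcal{L}}^{\mathfrak{C}}(E)$ is valid under stability and elimination of imaginaries (algebraic extensions are automatically nonforking), the definable-function argument gives stationary $\Rightarrow$ regular for every realization, and the application of Fact \ref{regular} with $f_1=\tau$, $f_2=\id$ gives regular for one realization $\Rightarrow$ stationary, closing the cycle since the remaining implication is trivial. Note that the present paper does not prove this statement (it is recalled as a Fact from \cite{Hoff3}), but your argument rests on exactly the tool the paper imports alongside it, Fact \ref{regular} (Fact 3.34 in \cite{Hoff3}), so it is essentially the intended proof; the only simplification available is that the $\exists\Leftrightarrow\forall$ equivalence also follows at once from automorphism-invariance of regularity (Fact \ref{regular.remark}(1)), since all realizations of $p$ are conjugate over $E$.
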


The following fact is crucial for upcoming proofs. We do not know whether there is a well known standard model theoretic argument for it, but the proof in \cite{Hoff3} makes use of the notion of regularity.

\begin{fact}[Corollary 3.37 in \cite{Hoff3}]\label{cor.stationary_types_exist}
For every small $\mathcal{L}$-substructure $N$ of $\mathfrak{C}$ and every $n<\omega$, there exists a non-algebraic stationary type over $N$ in $n$ many variables.
\end{fact}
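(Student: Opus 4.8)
The plan is to use the dictionary between stationarity and regularity recorded in Fact \ref{PACclaim}: a complete type $p$ over $N$ is stationary precisely when $N\subseteq NA_0$ is $\mathcal{L}$-regular for one (equivalently every) realisation $A_0\models p$. So it is enough to produce, for each $n<\omega$, a non-algebraic $n$-tuple $A_0$ with
$$\dcl_{\mathcal{L}}^{\mathfrak{C}}(NA_0)\cap\acl_{\mathcal{L}}^{\mathfrak{C}}(N)=\dcl_{\mathcal{L}}^{\mathfrak{C}}(N).$$
First I would pass to $M:=\acl_{\mathcal{L}}^{\mathfrak{C}}(N)$. By Fact \ref{regular.remark}(2) the extension $M\subseteq MA$ is regular for every small $A$, so by Fact \ref{PACclaim} \emph{every} complete type over $M$ is stationary; since $T$ has infinite models and $M$ is small, I can pick a non-algebraic $q\in S_n(M)$. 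The real work is to descend $q$ from $M$ down to $N$.

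The descent succeeds as soon as $q$ is chosen $\aut_{\mathcal{L}}(\mathfrak{C}/N)$-invariant, i.e. $\sigma(q)=q$ for every $\sigma\in\aut_{\mathcal{L}}(\mathfrak{C}/N)$ (where $\sigma(q)=\{\varphi(x,\sigma m):\varphi(x,m)\in q\}$), and this is exactly what makes $q$ the \emph{unique} non-forking extension of $q|_N$, hence $q|_N$ stationary. To see it concretely, let $A_0\models q$ and take $c\in\dcl_{\mathcal{L}}^{\mathfrak{C}}(NA_0)\cap M$; by quantifier elimination write $c=f(A_0)$ for an $\mathcal{L}$-definable partial function $f$ with parameters in $N$. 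For any $\sigma\in\aut_{\mathcal{L}}(\mathfrak{C}/N)$ we have $\sigma(M)=M$ and, by invariance, $\tp_{\mathcal{L}}^{\mathfrak{C}}(\sigma(A_0)/M)=\sigma(q)=q$; thus the formula ``$f(x)=c$'', which lies in $q$, holds of $\sigma(A_0)$, giving $\sigma(c)=f(\sigma(A_0))=c$. As $\sigma$ was arbitrary, $c$ is fixed by $\aut_{\mathcal{L}}(\mathfrak{C}/N)$, so $c\in\dcl_{\mathcal{L}}^{\mathfrak{C}}(N)$; this is the regularity of $N\subseteq NA_0$, and $q|_N$ is the desired type (non-algebraic since $q$ is).

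It remains to produce a non-algebraic $\aut_{\mathcal{L}}(\mathfrak{C}/N)$-invariant $q\in S_n(M)$, and this is the main obstacle, which is precisely where, as announced just before the statement, regularity rather than a routine model-theoretic argument is needed. Using stability, an invariant $q$ is the same as a stationary type over $M$ whose canonical base lies in $\dcl_{\mathcal{L}}^{\mathfrak{C}}(N)$, equivalently the restriction to $M$ of a global type definable over $\dcl_{\mathcal{L}}^{\mathfrak{C}}(N)$; I would build it as the non-forking extension to $M$ of a type based on $\dcl_{\mathcal{L}}^{\mathfrak{C}}(N)$. The delicate point is to guarantee that such an invariant extension can be kept \emph{non-algebraic}, i.e. that forcing the canonical base down into $\dcl_{\mathcal{L}}^{\mathfrak{C}}(N)$ does not collapse the type into $\acl_{\mathcal{L}}^{\mathfrak{C}}(N)$. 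Here I would invoke the extension property for regular extensions (Fact \ref{regular}) together with Fact \ref{regular.remark}(4)--(5) to realise such a type, and handle $n>1$ by taking $A_0$ to be a suitable independent tuple once the case $n=1$ is settled.
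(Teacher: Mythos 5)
You should first note that the paper itself contains no proof of this statement: by the paper's stated convention, results labelled ``Fact'' are recalled from earlier work, here Corollary 3.37 of \cite{Hoff3}, and the paragraph preceding the statement explicitly warns that no routine model-theoretic argument is known for it. So your attempt has to be judged as a reconstruction of that nontrivial proof. Your first three paragraphs are correct: setting $M:=\acl_{\mathcal{L}}^{\mathfrak{C}}(N)$, every type over $M$ is stationary by Fact \ref{regular.remark}(2) and Fact \ref{PACclaim}, and your definable-function argument correctly shows that if $q\in S_n(M)$ is non-algebraic and $\aut_{\mathcal{L}}(\mathfrak{C}/N)$-invariant, then $N\subseteq NA_0$ is regular for $A_0\models q$, so $q|_N$ is a non-algebraic stationary type over $N$. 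The trouble is that this reduction is an equivalence, not progress: an $\aut_{\mathcal{L}}(\mathfrak{C}/N)$-invariant type over $M$ is exactly the unique non-forking extension of a stationary type over $N$, so your step 4 --- ``build $q$ as the non-forking extension to $M$ of a type based on $\dcl_{\mathcal{L}}^{\mathfrak{C}}(N)$'' --- presupposes precisely the object whose existence the Fact asserts. The proposal is circular at its only substantive step, and that step is the entire content of the statement.

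The tools you invoke there cannot close the circle. Fact \ref{regular} and Fact \ref{regular.remark}(4) only manipulate regular extensions one already has, and Fact \ref{regular.remark}(5) produces a PAC substructure $P^{\ast}$ with $N\subseteq P^{\ast}$ regular but gives no guarantee that $P^{\ast}\not\subseteq\acl_{\mathcal{L}}^{\mathfrak{C}}(N)$; its statement is satisfied by $P^{\ast}=\dcl_{\mathcal{L}}^{\mathfrak{C}}(N)$ whenever the latter is itself PAC, in which case every tuple it yields is algebraic over $N$. That the existence claim is genuinely delicate can be seen already in the theory of an equivalence relation with exactly two infinite classes (with imaginaries added): over $N=\dcl_{\mathcal{L}}^{\mathfrak{C}}(\emptyset)$ the unique non-algebraic $1$-type of the home sort is \emph{not} stationary, since the two class codes lie in $\acl_{\mathcal{L}}^{\mathfrak{C}}(\emptyset)\setminus\dcl_{\mathcal{L}}^{\mathfrak{C}}(\emptyset)$ and are swapped by automorphisms; the non-algebraic stationary types over $N$ live only in imaginary sorts, e.g.\ the sort of codes of unordered pairs meeting both classes. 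So any correct proof must exploit elimination of imaginaries and move between sorts --- compare the finite-set coding used in the proof of Lemma \ref{alg.ext.PAC} --- and nothing in your step 4 does this. In short, you have proved the (easy) equivalence between invariant types over $\acl_{\mathcal{L}}^{\mathfrak{C}}(N)$ and stationary types over $N$, but not the existence statement, which is what Corollary 3.37 of \cite{Hoff3} establishes by its regularity-based construction.
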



\begin{fact}[Corollary 3.39 in \cite{Hoff3}]\label{regular.PAPA}
Assume that $E,A,B\subseteq\mathfrak{C}$, $A$ is $\mathcal{L}$-regular over $E$, $f_1,f_2\in\aut_{\mathcal{L}}(\mathfrak{C})$, $f_1|_E=f_2|_E$. If $A\ind^{\mathfrak{C}}_E B$ and $f_1(A)\ind^{\mathfrak{C}}_{f_1(E)} f_2(B)$, then there exists $h\in\aut_{\mathcal{L}}(\mathfrak{C})$ such that $h|_A=f_1|_A$ and $h|_B=f_2|_B$.
\end{fact}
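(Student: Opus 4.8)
The plan is to reduce the whole statement to a single equality of types and then to invoke the strong homogeneity of $\mathfrak{C}$. Viewing $A$ and $B$ as small enumerated tuples, it suffices to establish
$$\tp_{\mathcal{L}}(A,B)=\tp_{\mathcal{L}}(f_1(A),f_2(B)),$$
since then, as $|AB|<\kappa$ and $\mathfrak{C}$ is $\kappa$-strongly homogeneous, any automorphism realizing this equality maps the tuple $A$ to $f_1(A)$ and the tuple $B$ to $f_2(B)$ coordinatewise, which is precisely $h|_A=f_1|_A$ and $h|_B=f_2|_B$. (No consistency problem arises on a possible overlap $A\cap B$: coordinate equalities are part of the type being matched; in fact $A\cap B\subseteq\dcl_{\mathcal{L}}^{\mathfrak{C}}(E)$ by regularity together with $A\ind^{\mathfrak{C}}_E B$, and there $f_1$ and $f_2$ already agree.)

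First I would put $g:=f_1^{-1}f_2\in\aut_{\mathcal{L}}(\mathfrak{C})$. Since $f_1|_E=f_2|_E$, the map $g$ fixes $E$ pointwise. Applying $f_1^{-1}$ to the hypothesis $f_1(A)\ind^{\mathfrak{C}}_{f_1(E)}f_2(B)$ gives $A\ind^{\mathfrak{C}}_E g(B)$, whereas $A\ind^{\mathfrak{C}}_E B$ is assumed. Applying $f_1^{-1}$ to the target equality turns it into $\tp_{\mathcal{L}}(A,B)=\tp_{\mathcal{L}}(A,g(B))$, so it remains to show that $B$ and $g(B)$ have the same type over the tuple $A$.

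The heart of the matter is stationarity, and this is the one place where I would invoke regularity. By Fact \ref{PACclaim}, the regularity of $A$ over $E$ is equivalent to $\tp_{\mathcal{L}}(A/E)$ being stationary. Because $g$ fixes $E$ we have $g(A)\equiv_E A$, hence $\tp_{\mathcal{L}}(g(A)/E)=\tp_{\mathcal{L}}(A/E)$, and applying $g$ to $A\ind^{\mathfrak{C}}_E B$ yields $g(A)\ind^{\mathfrak{C}}_E g(B)$. Thus both $\tp_{\mathcal{L}}(A/E\,g(B))$ and $\tp_{\mathcal{L}}(g(A)/E\,g(B))$ are nonforking extensions of the stationary type $\tp_{\mathcal{L}}(A/E)$ to $E\cup g(B)$; by uniqueness of the nonforking extension they coincide, so in particular $\tp_{\mathcal{L}}(A/g(B))=\tp_{\mathcal{L}}(g(A)/g(B))$, i.e. $\tp_{\mathcal{L}}(A,g(B))=\tp_{\mathcal{L}}(g(A),g(B))$. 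As $g$ is an automorphism, $\tp_{\mathcal{L}}(g(A),g(B))=\tp_{\mathcal{L}}(A,B)$, which is the equality sought in the previous paragraph.

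I expect the main obstacle to be exactly this stationarity step. It is the sole use of the assumption that $A$ is regular over $E$; drop it, and the two nonforking extensions of $\tp_{\mathcal{L}}(A/E)$ to $g(B)$ need no longer agree, so the conclusion fails. Everything else is bookkeeping: the passage to $g$, the transport of the two independence hypotheses under $f_1^{-1}$ and $g$, and the concluding appeal to homogeneity, all relying only on symmetry and automorphism-invariance of forking. One could instead bootstrap from Fact \ref{regular}, first producing $h_0$ with $h_0|_A=f_1|_A$ and $h_0|_{\acl_{\mathcal{L}}^{\mathfrak{C}}(E)}=f_2|_{\acl_{\mathcal{L}}^{\mathfrak{C}}(E)}$ and then correcting by an automorphism fixing $f_1(A)$; but that route must juggle independence over $E$ versus over $\acl_{\mathcal{L}}^{\mathfrak{C}}(E)$, which the direct argument avoids.
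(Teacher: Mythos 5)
Your proposal is correct. Note that this statement is a ``Fact'' in the paper, imported verbatim from Corollary 3.39 of \cite{Hoff3}; the paper under review contains no proof of it, so there is nothing in this document to compare against line by line. That said, your argument is sound and is exactly in the spirit the paper itself advertises when it says regularity is ``an algebraic way to express stationarity'': you reduce to the single automorphism $g=f_1^{-1}f_2$ fixing $E$, transport both independence hypotheses to $A\ind^{\mathfrak{C}}_E g(B)$ and $g(A)\ind^{\mathfrak{C}}_E g(B)$, convert the regularity of $A$ over $E$ into stationarity of $\tp_{\mathcal{L}}(A/E)$ via Fact \ref{PACclaim}, identify the two nonforking extensions over $Eg(B)$ by uniqueness, and close with $\kappa$-strong homogeneity. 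The one implicit assumption worth flagging is that $A$ and $B$ must be taken small (as is tacit throughout the paper) for the homogeneity step, and that stationarity and uniqueness of nonforking extensions are being used for possibly infinite tuples, which is legitimate in a stable theory; your parenthetical handling of the overlap $A\cap B$ (showing it lies in $\dcl_{\mathcal{L}}^{\mathfrak{C}}(E)$, where $f_1$ and $f_2$ agree) is correct but, as you note, already subsumed by the type equality itself.
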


Now, we use the above fact to express regularity in the terms of absolute Galois groups.

\begin{lemma}\label{regular.Galois}
Assume that $N\subseteq N'$ are small subsets of $\mathfrak{C}$. The set $N'$ is regular over $N$ if and only if the restriction map $\mathcal{G}(N')\to\mathcal{G}(N)$ is onto.
\end{lemma}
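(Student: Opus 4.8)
The plan is to pin down the restriction map first and then treat the two implications separately; the direction \emph{regularity $\Rightarrow$ surjectivity} carries the real content (it is exactly where Fact \ref{regular} enters), while the converse is a short deduction from the Galois correspondence.

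The restriction map is $\rho\colon\mathcal{G}(N')\to\mathcal{G}(N)$, $\rho(\sigma):=\sigma|_{\acl_{\mathcal{L}}^{\mathfrak{C}}(N)}$. This is well-defined: from $N\subseteq N'$ we get $\dcl_{\mathcal{L}}^{\mathfrak{C}}(N)\subseteq\dcl_{\mathcal{L}}^{\mathfrak{C}}(N')$ and $\acl_{\mathcal{L}}^{\mathfrak{C}}(N)\subseteq\acl_{\mathcal{L}}^{\mathfrak{C}}(N')$, and any $\sigma\in\mathcal{G}(N')$ extends to an automorphism of $\mathfrak{C}$ fixing $N$ pointwise, hence fixing $\dcl_{\mathcal{L}}^{\mathfrak{C}}(N)$ pointwise and preserving $\acl_{\mathcal{L}}^{\mathfrak{C}}(N)$ setwise, so that $\sigma|_{\acl_{\mathcal{L}}^{\mathfrak{C}}(N)}\in\mathcal{G}(N)$.

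For the forward direction, assume $N'$ is regular over $N$ and fix $\tau\in\mathcal{G}(N)$. Extend $\tau$ to $\hat\tau\in\aut_{\mathcal{L}}(\mathfrak{C})$; since $\tau$ fixes $\dcl_{\mathcal{L}}^{\mathfrak{C}}(N)\supseteq N$ pointwise, $\hat\tau$ agrees with $\id$ on $N$. I then apply Fact \ref{regular} with $E=N$, $A=N'$, $f_1=\id$ and $f_2=\hat\tau$: the hypothesis $f_1|_N=f_2|_N$ holds, and regularity of $N'$ over $N$ produces $h\in\aut_{\mathcal{L}}(\mathfrak{C})$ with $h|_{N'}=\id|_{N'}$ and $h|_{\acl_{\mathcal{L}}^{\mathfrak{C}}(N)}=\hat\tau|_{\acl_{\mathcal{L}}^{\mathfrak{C}}(N)}=\tau$. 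As $h$ fixes $N'$ pointwise it fixes $\dcl_{\mathcal{L}}^{\mathfrak{C}}(N')$ pointwise and preserves $\acl_{\mathcal{L}}^{\mathfrak{C}}(N')$, so $\sigma:=h|_{\acl_{\mathcal{L}}^{\mathfrak{C}}(N')}\in\mathcal{G}(N')$ and $\rho(\sigma)=h|_{\acl_{\mathcal{L}}^{\mathfrak{C}}(N)}=\tau$. Hence $\rho$ is onto. For the converse, assume $\rho$ is onto; I show $\dcl_{\mathcal{L}}^{\mathfrak{C}}(N')\cap\acl_{\mathcal{L}}^{\mathfrak{C}}(N)=\dcl_{\mathcal{L}}^{\mathfrak{C}}(N)$, the inclusion $\supseteq$ being automatic. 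Suppose $a\in\dcl_{\mathcal{L}}^{\mathfrak{C}}(N')\cap\acl_{\mathcal{L}}^{\mathfrak{C}}(N)$ but $a\notin\dcl_{\mathcal{L}}^{\mathfrak{C}}(N)$. Applying the Galois correspondence (Fact \ref{galois.correspondence}) to the Galois extension $\dcl_{\mathcal{L}}^{\mathfrak{C}}(N)\subseteq\acl_{\mathcal{L}}^{\mathfrak{C}}(N)$, the fixed set of $\mathcal{G}(N)$ equals $\dcl_{\mathcal{L}}^{\mathfrak{C}}(N)$, so some $\tau\in\mathcal{G}(N)$ has $\tau(a)\neq a$. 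By surjectivity pick $\sigma\in\mathcal{G}(N')$ with $\sigma|_{\acl_{\mathcal{L}}^{\mathfrak{C}}(N)}=\tau$. Then $\sigma$ fixes $\dcl_{\mathcal{L}}^{\mathfrak{C}}(N')\ni a$ pointwise, giving $a=\sigma(a)=\tau(a)\neq a$, a contradiction. Thus $a\in\dcl_{\mathcal{L}}^{\mathfrak{C}}(N)$, i.e.\ $N'$ is regular over $N$.

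The only genuinely nontrivial ingredient is Fact \ref{regular} in the forward direction, which packages the stability-theoretic amalgamation hidden behind regularity; everything else is formal bookkeeping with $\dcl$/$\acl$ and the Galois correspondence. I would double-check that $\dcl_{\mathcal{L}}^{\mathfrak{C}}(N)\subseteq\acl_{\mathcal{L}}^{\mathfrak{C}}(N)$ is a Galois extension in the sense of Definition \ref{galois.ext.def} (normality of $\acl$ over $\dcl$ and definable closedness of $\acl$), which is standard, so that Fact \ref{galois.correspondence} applies as used.
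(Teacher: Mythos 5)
Your proof is correct and takes essentially the same route as the paper's: the forward direction rests on the regularity--amalgamation fact (you invoke Fact \ref{regular}, which is exactly the special case of Fact \ref{regular.PAPA} that the paper's one-line argument relies on), and your converse is the paper's lifting argument in contrapositive form. The only cosmetic difference is that you extract the automorphism moving $a$ from the Galois correspondence (Fact \ref{galois.correspondence}), where the paper gets it directly from homogeneity of $\mathfrak{C}$ via the observation that $f|_{\acl_{\mathcal{L}}^{\mathfrak{C}}(N)}\in\mathcal{G}(N)$ for every $f\in\aut_{\mathcal{L}}(\mathfrak{C}/N)$.
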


\begin{proof}
If $N'$ is regular over $N$, then $\dcl_{\mathcal{L}}^{\mathfrak{C}}(N)\subseteq \dcl_{\mathcal{L}}^{\mathfrak{C}}(N')$ is a regular extension and we can use Fact \ref{regular.PAPA} to show the surjectivity.

Let $n\in\dcl_{\mathcal{L}}^{\mathfrak{C}}(N')\cap\acl_{\mathcal{L}}^{\mathfrak{C}}(N)$ and let $f\in\aut_{\mathcal{L}}(\mathfrak{C}/N)$. The map $f|_{\acl_{\mathcal{L}}^{\mathfrak{C}}(N)}$ belongs to $\aut_{\mathcal{L}}\big(\acl_{\mathcal{L}}^{\mathfrak{C}}(N)/N\big)=\mathcal{G}(N)$ and therefore it is a restriction of some $\tilde{f}\in\mathcal{G}(N')$. Because $n\in\dcl_{\mathcal{L}}^{\mathfrak{C}}(N')$, we have $f(n)=\tilde{f}(n)=n$, thus $n\in\dcl_{\mathcal{L}}^{\mathfrak{C}}(N)$.
\end{proof}

\begin{fact}[Lemma 3.40 in \cite{Hoff3}]\label{lang413}
Assume that $E\subseteq A$ is $\mathcal{L}$-regular, $E\subseteq B$ and $B\ind_E^{\mathfrak{C}} A$,
then $B\subseteq BA$ is $\mathcal{L}$-regular.
\end{fact}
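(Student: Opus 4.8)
The plan is to route the statement through the dictionary between regularity and stationarity recorded in Fact \ref{PACclaim}, after which the claim reduces to the standard fact that a nonforking extension of a stationary type is again stationary.

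First I would rephrase the hypothesis. Because $E\subseteq A$ (so that $EA=A$ as sets), applying Fact \ref{PACclaim} to $p:=\tp_{\mathcal{L}}^{\mathfrak{C}}(A/E)$ with the realization $A_0=A$ shows that ``$E\subseteq A$ is regular'' is equivalent to ``$p$ is stationary''. In the same way, since $B\subseteq BA$, applying Fact \ref{PACclaim} over the base $B$ to $q:=\tp_{\mathcal{L}}^{\mathfrak{C}}(A/B)$ with $A_0=A$ shows that the desired conclusion ``$B\subseteq BA$ is regular'' is equivalent to ``$q$ is stationary''. Thus the whole lemma reduces to the implication: if $\tp(A/E)$ is stationary and $B\ind_E^{\mathfrak{C}}A$, then $\tp(A/B)$ is stationary.

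For the core step I would first observe, using symmetry of forking in the stable theory $T$, that $B\ind_E^{\mathfrak{C}}A$ is the same as $A\ind_E^{\mathfrak{C}}B$; and since $E\subseteq B$ this says exactly that $q=\tp(A/B)$ does not fork over $E$, i.e.\ $q$ is a nonforking extension of $p$. It then remains to invoke the standard stability-theoretic fact that a nonforking extension of a stationary type is stationary. If one wants to see this directly: given any $C\supseteq B$ and two nonforking extensions $q_1,q_2\in S(C)$ of $q$, transitivity of nonforking makes both $q_1$ and $q_2$ nonforking extensions of $p$ to $C$, so stationarity of $p$ forces $q_1=q_2$; hence $q$ has a unique nonforking extension over every such $C$ and is therefore stationary.

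The only real delicacy is bookkeeping: one must check that the set concatenations collapse as $EA=A$ and $BA$, so that Fact \ref{PACclaim} applies over the base $E$ (respectively $B$) with the single realization $A_0=A$, and that the independence hypothesis is correctly read as ``$q$ does not fork over $E$'' via $E\subseteq B$. Once this is arranged, all the mathematical content is carried by the preservation of stationarity under nonforking extension. Alternatively, one could bypass the translation and argue directly with automorphisms by the same amalgamation used in Fact \ref{regular.PAPA}, but the stationarity route seems shortest given the facts already at hand.
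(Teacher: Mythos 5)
Your argument is correct, but note that there is nothing in this paper to compare it against: by the paper's stated convention, results labelled ``Fact'' are quoted from \cite{Hoff3} (here Lemma 3.40) with proofs omitted, so the paper itself offers no proof of this statement. That said, your route fits precisely the framework the paper sets up around it, namely the dictionary of Fact \ref{PACclaim}, which the text explicitly advertises as saying that regularity is an algebraic rendering of stationarity. Your bookkeeping is sound: since $E\subseteq A$, Fact \ref{PACclaim} applied to $\tp_{\mathcal{L}}^{\mathfrak{C}}(A/E)$ with $A_0=A$ identifies regularity of $E\subseteq A$ with stationarity of that type (the existential direction gives ``regular $\Rightarrow$ stationary'', the universal direction the converse), and likewise over the base $B$ for the conclusion; symmetry of forking turns $B\ind_E^{\mathfrak{C}}A$ into the assertion that $\tp_{\mathcal{L}}^{\mathfrak{C}}(A/B)$ is a nonforking extension of $\tp_{\mathcal{L}}^{\mathfrak{C}}(A/E)$; and your transitivity argument correctly yields that a nonforking extension of a stationary type is stationary. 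The one caveat you should state explicitly is that $A$ and $B$ are arbitrary small subsets of $\mathfrak{C}$, so all the types involved are types in possibly infinitely many variables; you therefore need Fact \ref{PACclaim} and the forking calculus (symmetry, transitivity, uniqueness of nonforking extensions of stationary types) in their infinitary form. These do hold in stable theories, and Fact \ref{PACclaim} is stated with no restriction on the number of variables, so this is a matter of saying it rather than a gap in the argument.
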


\begin{fact}[Corollary 3.41 in \cite{Hoff3}]\label{cor.413}
Assume that $E\subseteq A$ and $E\subseteq B$ are $\mathcal{L}$-regular, and $B\ind_E^{\mathfrak{C}} A$,
then $E\subseteq BA$ is $\mathcal{L}$-regular.
\end{fact}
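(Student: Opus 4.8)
The plan is to derive the statement directly by chaining the two immediately preceding facts, so the argument will be short. The key observation is that Fact~\ref{lang413} already converts the independence hypothesis into a statement about the regularity of the extension $B\subseteq BA$, after which the transitivity of regularity recorded in Fact~\ref{regular.remark}(4) closes the gap back down to $E$. In other words, the independence assumption $B\ind_E^{\mathfrak{C}}A$ is used exactly once, to feed Fact~\ref{lang413}, and everything else is formal.

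Concretely, first I would apply Fact~\ref{lang413}. Its hypotheses are precisely that $E\subseteq A$ is $\mathcal{L}$-regular, that $E\subseteq B$, and that $B\ind_E^{\mathfrak{C}}A$, all of which we are given. Its conclusion is that $B\subseteq BA$ is $\mathcal{L}$-regular.

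Next I would invoke Fact~\ref{regular.remark}(4). At this stage we have two regular extensions stacked on top of one another: $E\subseteq B$, which is a hypothesis of the corollary, and $B\subseteq BA$, which is the output of the previous step. Transitivity of regularity therefore yields that $E\subseteq BA$ is $\mathcal{L}$-regular, which is exactly the desired conclusion.

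There is no genuine obstacle here: the result is a formal corollary obtained by feeding the independence assumption into Fact~\ref{lang413} and then composing with transitivity, with $B$ serving as the middle term. The only point requiring a moment's care is bookkeeping, namely checking that the intermediate extension $B\subseteq BA$ produced by Fact~\ref{lang413} is literally the one that slots into the transitivity statement of Fact~\ref{regular.remark}(4), and that the hypothesis $E\subseteq B$ needed by Fact~\ref{lang413} is supplied by the assumption that $E\subseteq B$ is regular (in particular an inclusion).
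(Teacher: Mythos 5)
Your proof is correct and is exactly the intended derivation: the paper records this statement as a corollary (Corollary 3.41 in \cite{Hoff3}) of the immediately preceding Lemma 3.40 (Fact \ref{lang413} here), obtained by feeding the independence hypothesis into that lemma to get that $B\subseteq BA$ is regular and then composing with the transitivity of regularity from Fact \ref{regular.remark}(4). No gaps; the bookkeeping points you flag are handled correctly.
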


\section{Profinite group as Galois group}
Now, we will make use of Lemma \ref{N_Galois}, to show that every profinite group is isomorphic to some Galois group present in our stable structure $\mathfrak{C}$. The following proposition is a straightforward generalization of Proposition 1.3.3 in \cite{FrJa}.

\begin{prop}\label{fr.ja.133}
Assume that $N_0\subseteq N$ is a Galois extension of small substructures of $\mathfrak{C}$ and 
that there is an epimorphism of profinite groups $\alpha: G\to\aut_{\mathcal{L}}(N/N_0)$.
There exist small substructures $M_0,M$ of $\mathfrak{C}$, such that $N_0\subseteq M_0$ and $N\subseteq M$ are regular, $M_0\subseteq M$ is Galois, and 
there is an isomorphism $\beta:G\to\aut_{\mathcal{L}}(M/M_0)$ such that 
$$\xymatrix{G \ar[r]^-{\beta} \ar[dr]_-{\alpha} & \aut_{\mathcal{L}}(M/M_0) \ar[d]^{|_N}\\ & \aut_{\mathcal{L}}(N/N_0)
}$$
is commuting.
\end{prop}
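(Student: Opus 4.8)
The plan is to realise $G$ directly as an automorphism group by means of Lemma \ref{N_Galois}(5), building $M$ out of $N$ by adjoining a large family of independent realisations of one fixed stationary type on which $G$ acts by permutations. To begin, pull back the tautological action of $\aut_{\mathcal L}(N/N_0)$ on $N$ along $\alpha$: setting $g\cdot b:=\alpha(g)(b)$ gives a $G$-action on $N$ fixing $N_0$ pointwise, and since $N_0\subseteq N$ is Galois and $\alpha$ is onto, the Galois correspondence (Fact \ref{galois.correspondence}) yields $N^{G}=N^{\aut_{\mathcal L}(N/N_0)}=N_0$. This action is in general not faithful, its kernel being $\ker\alpha$, so the new part of $M$ must separate the elements of $\ker\alpha$ while keeping all orbits finite.

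Fix a non-algebraic stationary type $p$ over $N_0$ (Fact \ref{cor.stationary_types_exist}); its nonforking extension $p|_N$ is again stationary. For every open normal subgroup $\mathcal N\trianglelefteq G$ adjoin a copy of the regular representation of $G/\mathcal N$: realisations $(a^{\mathcal N}_{x})_{x\in G/\mathcal N}$ of $p|_N$, the whole family (across all $\mathcal N$) being independent over $N$, with $G$ acting by left translation $g\cdot a^{\mathcal N}_{x}=a^{\mathcal N}_{gx}$. Put $M:=\dcl_{\mathcal L}^{\mathfrak C}(N\cup\{a^{\mathcal N}_{x}\})$. As $p$ is based on $N_0$ and stationary, $p|_N$ is $\aut_{\mathcal L}(N/N_0)$-invariant, so the prescribed assignment ``act as $\alpha(g)$ on $N$ and translate the indices by $g$'' is partial elementary; this is the one place where the stationarity of $p$ and the mutual independence of the $a$'s are essential (it follows by iterating Fact \ref{regular.PAPA}). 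Extending each such map to $\aut_{\mathcal L}(\mathfrak C)$ by homogeneity and restricting to $M$ gives automorphisms $\tau_g$ with $\tau_g(M)=M$, and $g\mapsto\tau_g|_M$ is a genuine $G$-action because $\alpha$ is a homomorphism and translation is an action.

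Next I would verify the hypotheses of Lemma \ref{N_Galois}(5). The action is faithful: for $g\neq1$ choose $\mathcal N$ with $g\notin\mathcal N$; then $\tau_g$ sends the realisation indexed by the trivial coset $\mathcal N\in G/\mathcal N$ to the distinct realisation indexed by $g\mathcal N$, so $\tau_g|_M\neq\id$. Every point-stabiliser is open: an $m\in M$ lies in the definable closure of a finite tuple $b\subseteq N$ together with finitely many of the $a$'s, so $\stab(m)$ contains $\alpha^{-1}\big(\stab_{\aut_{\mathcal L}(N/N_0)}(b)\big)\cap\bigcap\stab(a^{\mathcal N}_{x})$, a finite intersection of open subgroups (the first is open because $b\in\acl_{\mathcal L}^{\mathfrak C}(N_0)$ has finite $\aut_{\mathcal L}(N/N_0)$-orbit, hence stabiliser of finite index and therefore open, and $\alpha$ is continuous, while each remaining stabiliser equals $\mathcal N$). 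Lemma \ref{N_Galois}(5) then provides that $M_0:=M^{G}$ satisfies that $M_0\subseteq M$ is Galois and $\beta:=i\colon G\cong\aut_{\mathcal L}(M/M_0)$; as every $\tau_g$ fixes $N_0$ we have $N_0\subseteq M_0$, and $\beta(g)|_N=\tau_g|_N=\alpha(g)$ is precisely the required commuting triangle. The regularity of $N\subseteq M$ is immediate: the $a^{\mathcal N}_x$ realise the stationary type $p|_N$ and are independent over $N$, so by Fact \ref{PACclaim} each single adjunction is regular over $N$, and iterating Fact \ref{cor.413} (reducing to finitely many $a$'s by finite character) gives that $N\subseteq M$ is regular.

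Finally, $N_0\subseteq M_0$ is handled by a diagram chase on absolute Galois groups. Fact \ref{fact.exact}, applied to $N_0\subseteq N\subseteq\acl_{\mathcal L}^{\mathfrak C}(N_0)$ and to $M_0\subseteq M\subseteq\acl_{\mathcal L}^{\mathfrak C}(M_0)$, gives exact sequences $1\to\mathcal G(N)\to\mathcal G(N_0)\to\aut_{\mathcal L}(N/N_0)\to1$ and $1\to\mathcal G(M)\to\mathcal G(M_0)\to\aut_{\mathcal L}(M/M_0)\to1$, and the restriction maps assemble them into a commuting ladder whose right-hand vertical arrow is, via $\beta$, the surjection $\alpha$. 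Because $\mathcal G(M)\to\mathcal G(N)$ is onto (Lemma \ref{regular.Galois} applied to the regular extension $N\subseteq M$) and $\alpha$ is onto, a short chase shows $\mathcal G(M_0)\to\mathcal G(N_0)$ is onto, so $N_0\subseteq M_0$ is regular by Lemma \ref{regular.Galois}. The main obstacle is the construction in the second paragraph: one must arrange the realisations so that the prescribed $G$-action is simultaneously by elementary maps — which forces $p$ to live over $N_0$ and the $a$'s to be mutually independent — and has open point-stabilisers with trivial global kernel — which forces indexing by the finite quotients $G/\mathcal N$ rather than by $G$ itself. Once the action is in place, Lemma \ref{N_Galois}(5) and the two applications of Lemma \ref{regular.Galois} do the rest.
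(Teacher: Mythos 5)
Your proposal is correct and is essentially the paper's own construction: the same index set (disjoint union of the quotients $G/\mathcal{N}$ over open normal $\mathcal{N}\trianglelefteq G$), the same left-translation action on a Morley sequence in a stationary type, the same gluing of that action with the pulled-back action $g\mapsto\alpha(g)$ on $N$ (the paper glues over $N_0$ via Fact \ref{regular}, you glue via Fact \ref{regular.PAPA}, which amounts to the same thing since $N\subseteq\acl_{\mathcal{L}}^{\mathfrak{C}}(N_0)$), the same verification of faithfulness and open point-stabilisers, and the same appeal to Lemma \ref{N_Galois}(5). The one place you genuinely diverge is the regularity of $N_0\subseteq M_0$: the paper does this in two lines, first showing $M_0\cap N=N_0$ (if $m\in M_0\cap N$ then $m$ is fixed by every $\alpha(g)$, hence by all of $\aut_{\mathcal{L}}(N/N_0)$ since $\alpha$ is onto, so $m\in N_0$), and then intersecting the regularity equation $M\cap\acl_{\mathcal{L}}^{\mathfrak{C}}(N)=N$ with $M_0$, using $\acl_{\mathcal{L}}^{\mathfrak{C}}(N_0)=\acl_{\mathcal{L}}^{\mathfrak{C}}(N)$; your route instead assembles the two exact sequences of Fact \ref{fact.exact} into a ladder and chases it, using Lemma \ref{regular.Galois} twice. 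Your chase is valid (the right-hand vertical arrow is onto because it is $\alpha\circ\beta^{-1}$, and surjectivity of $\mathcal{G}(M)\to\mathcal{G}(N)$ fills in the kernel), but it is strictly heavier machinery for the same fact; the paper's element-wise argument is worth knowing since the identity $M_0\cap N=N_0$, which your chase bypasses, is exactly the point where surjectivity of $\alpha$ enters, and it reappears in the proof of Theorem \ref{PAC.proj}. One small caveat: your justification that $\stab_{\aut_{\mathcal{L}}(N/N_0)}(b)$ is open reads ``finite index, therefore open,'' which is not valid in an arbitrary profinite group; the correct reason (implicit in the paper as well) is that point stabilisers are open in the Galois-group topology, and openness of $\stab(b)$ in $G$ then follows from continuity of $\alpha$.
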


\begin{proof}
Set $X$ denotes the disjoint union of all quotient groups $G/\mathcal{N}$, where $\mathcal{N}$ is an open normal subgroup of $G$. Assume that $X$ is ordered in some way, say $X=\lbrace x_{\lambda}\;|\;\lambda<\lambda'\rbrace$.

Consider a non-algebraic stationary type $p(x)\in S(N_0)$ (Fact \ref{cor.stationary_types_exist}) and a Morley sequence in $p(x)$ indexed by the set $X$, $\bar{b}=(b_{g\mathcal{N}})_{g\mathcal{N}\in X}$. 

We define a $G$-action on the substructure $\dcl_{\mathcal{L}}^{\mathfrak{C}}(N_0\bar{b})$ in the following way
$$g'\cdot b_{g\mathcal{N}}:=b_{g'g\mathcal{N}}$$
(similarly as in the proof of Proposition 3.57 in \cite{Hoff3}, we inductively prove that there exists an automorphism of $\mathfrak{C}$ satisfying the above line).

By Fact \ref{cor.413}, a transfinite induction shows that $N_0\subseteq\dcl_{\mathcal{L}}^{\mathfrak{C}}(N_0\bar{b})$ is regular. 
Since $N$ is an algebraic over $N_0$, we can use Fact \ref{regular} to extend the above defined $G$-action on $\dcl_{\mathcal{L}}^{\mathfrak{C}}(N_0\bar{b})$ and $G$-action on $N$ given by
$$g\cdot m=\alpha(g)(m),$$
where $g\in G$ and $m\in N$, to a $G$-action on $M:=\dcl_{\mathcal{L}}^{\mathfrak{C}}(N\bar{b})$, say $\beta:G\to\aut_{\mathcal{L}}(M)$.
By Fact \ref{lang413}, it follows that $N\subseteq M$ is regular.

Let $M_0$ denote $M^G$.
Note that $M_0\cap N=N_0$ . To see this take $m\in M_0\cap N$. Because $m\in M_0$, it follows $m=g\cdot m =\alpha(g)(m)$ for each $g\in G$. Since $\alpha$ is onto, we obtain $m=f(m)$ for each $f\in\aut_{\mathcal{L}}(N/N_0)$, hence $m\in N_0$.

To show that $N_0\subseteq M_0$ is regular, recall that $N\subseteq M$ is regular:
$$M\cap\acl_{\mathcal{L}}^{\mathfrak{C}}(N)=N.$$
After intersecting both sides with $M_0$, we get
$$M_0\cap\acl_{\mathcal{L}}^{\mathfrak{C}}(N_0)=M_0\cap N=N_0.$$

Note that $\stab(b_{g\mathcal{N}})=\mathcal{N}$, which is an open subgroup of $G$, and for $m\in N$ we have $\stab(m)=\alpha^{-1}\Big(\lbrace f\in\aut_{\mathcal{L}}(N/N_0)\;|\; f(a)=a \rbrace\Big)$, which also is an open subgroup of $G$. We see that the action of group $G$ on $M$ is faithful. Before we can use Lemma \ref{N_Galois}.(5), we need to check whether for every $m\in M$ the stabilizer $\stab(m)$ is an open subgroup of $G$.

Because $m\in M$, there exists an $\mathcal{L}$-formula $\psi$ such that for some $a_1,\ldots,a_n\in N$ and some $g_1\mathcal{N}_1,\ldots,g_{n'}\mathcal{N}_{n'}\in X$
we have
$$\psi(a_1,\ldots a_n,b_{g_1\mathcal{N}_1},\ldots,b_{g_{n'}\mathcal{N}_{n'}},\mathfrak{C})=\lbrace m\rbrace.$$
Therefore $\stab(m)$ contains open subgroup
$$\stab(a_1)\cap\ldots\cap\stab(a_n)\cap\stab(b_{g_1\mathcal{N}_1})\cap\ldots\cap \stab(b_{g_{n'}\mathcal{N}_{n'}}),$$
hence $\stab(m)$ is an open subgroup of $G$.
By Lemma \ref{N_Galois}.(5) it follows that $M_0\subseteq M$ is Galois 
and $\beta:G\to\aut_{\mathcal{L}}(M/M_0)$ is an isomorphism of profinite groups.
The last thing we need to check is that for any $g\in G$ it follows that
$\beta(g)|_N=\alpha(g)$, but 
it follows from the construction of the $G$-action on $M$.
\end{proof}

\begin{cor}\label{pro.embed}
For every profinite group $G$ there exist a Galois extension $M_0\subseteq M$ of small substructures of $\mathfrak{C}$ such that $G\cong\aut_{\mathcal{L}}(M/M_0)$.
\end{cor}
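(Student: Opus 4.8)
The plan is to derive this as an immediate special case of Proposition \ref{fr.ja.133}. The proposition takes an epimorphism $\alpha: G \to \aut_{\mathcal{L}}(N/N_0)$ onto the Galois group of a small Galois extension $N_0 \subseteq N$ and produces a Galois extension $M_0 \subseteq M$ with $G \cong \aut_{\mathcal{L}}(M/M_0)$. So to obtain the corollary I only need to supply, for an arbitrary profinite $G$, some starting Galois extension $N_0 \subseteq N$ together with an epimorphism from $G$ onto its Galois group. The cheapest choice is the \emph{trivial} one.

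Concretely, I would first invoke Fact \ref{cor.stationary_types_exist} (or simply pick any small definably closed substructure $N_0 = \dcl_{\mathcal{L}}^{\mathfrak{C}}(A)$ of $\mathfrak{C}$) and set $N := N_0$. Then $N_0 \subseteq N$ is trivially a Galois extension: we have $N_0 \subseteq N = \dcl_{\mathcal{L}}^{\mathfrak{C}}(N) \subseteq \acl_{\mathcal{L}}^{\mathfrak{C}}(N_0)$, $N_0$ and $N$ are definably closed, and $N$ is (vacuously) normal over $N_0$ since $N = N_0$. In this case $\aut_{\mathcal{L}}(N/N_0) = \aut_{\mathcal{L}}(N_0/N_0) = 1$ is the trivial group. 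The unique homomorphism $\alpha: G \to 1$ is then trivially an epimorphism of profinite groups, so the hypotheses of Proposition \ref{fr.ja.133} are met.

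Applying Proposition \ref{fr.ja.133} to this data yields small substructures $M_0 \subseteq M$ of $\mathfrak{C}$ with $N_0 \subseteq M_0$ and $N \subseteq M$ regular, $M_0 \subseteq M$ a Galois extension, and an isomorphism $\beta: G \to \aut_{\mathcal{L}}(M/M_0)$. This is exactly the conclusion of the corollary; the commuting-triangle and regularity data are not needed here and may be discarded.

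There is essentially no obstacle to overcome: all the real work—constructing the Morley sequence indexed by $X = \bigsqcup_{\mathcal{N}} G/\mathcal{N}$, defining the $G$-action, verifying regularity via Facts \ref{cor.413} and \ref{lang413}, and checking the openness of stabilisers so that Lemma \ref{N_Galois}.(5) applies—has already been carried out inside the proof of Proposition \ref{fr.ja.133}. The only point to get right is the verification that a trivial starting extension is legitimate, i.e. that $N_0 \subseteq N_0$ genuinely counts as a Galois extension with trivial Galois group and that the zero map onto the trivial group qualifies as an epimorphism; both are immediate from Definition \ref{galois.ext.def} and the definition of profinite epimorphism. Hence the corollary follows.
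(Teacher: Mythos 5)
Your proposal is correct and is essentially the paper's own argument: the paper states Corollary \ref{pro.embed} without proof precisely because it is the immediate specialization of Proposition \ref{fr.ja.133} to a trivial Galois extension $N_0=N$ (any small definably closed substructure) with the trivial epimorphism $G\to\aut_{\mathcal{L}}(N_0/N_0)=1$, exactly as you do. Your passing mention of Fact \ref{cor.stationary_types_exist} is unnecessary (it plays no role in choosing $N_0$), but the parenthetical alternative you give is the right one, so this is harmless.
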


\begin{cor}\label{pro.embed2}
A group $G$ is profinite if and only if there exist a Galois extension $M_0\subseteq M$ of small substructures of $\mathfrak{C}$ such that $G\cong\aut_{\mathcal{L}}(M/M_0)$.
\end{cor}

\begin{proof}
By Fact \ref{fact317} and Corollary \ref{pro.embed}.
\end{proof}
Note that our definition of a PAC substructure implies that a PAC substructure is definably closed, which corresponds to being a perfect field in the case of the theory ACF, so one could wonder whether in the case of the theory ACF, projective profinite groups correspond to absolute Galois groups of PAC fields or \emph{perfect} PAC fields. In fact, they correspond to perfect PAC fields (see Corollary 23.1.2 in \cite{FrJa}).

\section{Projective profinite groups}
\subsection{PAC has projective absolute Galois group}
We start with a simple remark which helps in a better understanding property of being a PAC substructure.

\begin{cor}\label{PAC.def2}
A small substructure $P\subseteq\mathfrak{C}$ is PAC if and only if every stationary type over $P$ is finitely satisfable in $P$.
\end{cor}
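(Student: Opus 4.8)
The plan is to prove the two implications separately, using Fact~\ref{PACclaim} as the bridge between the (semantic) notion of stationarity and the (algebraic) notion of regularity. Throughout I will use that for a complete type $p$, being finitely satisfiable in $P$ is the same thing as: every single formula $\varphi(x)\in p$ has a realization in $P$ (a finite conjunction of members of $p$ is again, up to equivalence, a single member of $p$, by completeness and consistency of $p$). I will also use quantifier elimination to replace any formula by a quantifier-free one, so that witnessing an existential formula is exactly what existential closedness $P\preceq_1 N'$ provides.

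For the forward direction, assume $P$ is PAC and let $p\in S(P)$ be stationary. Pick a realization $a\models p$. By Fact~\ref{PACclaim}, stationarity of $p$ gives that $P\subseteq Pa$ is regular, so $N':=\dcl_{\mathcal{L}}^{\mathfrak{C}}(Pa)$ is a small, definably closed, regular extension of $P$ (note $\dcl_{\mathcal{L}}^{\mathfrak{C}}(N')\cap\acl_{\mathcal{L}}^{\mathfrak{C}}(P)=\dcl_{\mathcal{L}}^{\mathfrak{C}}(Pa)\cap\acl_{\mathcal{L}}^{\mathfrak{C}}(P)=\dcl_{\mathcal{L}}^{\mathfrak{C}}(P)$). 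Since $P$ is PAC, $P\preceq_1 N'$. Now for any $\varphi(x)\in p$, the element $a\in N'$ witnesses $N'\models\exists x\,\varphi(x)$, and as this is an existential sentence with parameters in $P$, existential closedness yields a witness $b\in P$. Hence every formula in $p$ is realized in $P$, i.e.\ $p$ is finitely satisfiable in $P$.

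For the reverse direction, assume every stationary type over $P$ is finitely satisfiable in $P$, and let $N'$ be an arbitrary small regular extension of $P$; I must show $P\preceq_1 N'$. Take an existential formula $\exists x\,\varphi(x)$, with $\varphi$ quantifier-free and parameters in $P$, that holds in $N'$, witnessed by some $a\in N'$. Then $Pa\subseteq N'$, so by Fact~\ref{regular.remark}(3) the extension $P\subseteq Pa$ is again regular, whence $p:=\tp_{\mathcal{L}}^{\mathfrak{C}}(a/P)$ is stationary by Fact~\ref{PACclaim}. By hypothesis $p$ is finitely satisfiable in $P$, and since $\varphi(x)\in p$ this produces $b\in P$ with $\models\varphi(b)$, so $P\models\exists x\,\varphi(x)$. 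As $N'$ and the formula were arbitrary, $P$ is PAC.

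I expect the only real subtlety to be the bookkeeping around Fact~\ref{PACclaim}: it must be invoked in both directions, once to manufacture a regular extension out of a stationary type and once to recognize stationarity inside a given regular extension, and in each case one must check that the relevant finite tuple generates a regular sub-extension (this is where Fact~\ref{regular.remark}(3) and the $\dcl$-formulation of regularity do the work). A minor point to verify is that finite satisfiability of a complete type collapses to realizability of each individual formula, so that existential closedness and finite satisfiability line up formula-by-formula; after that, everything is a direct translation through quantifier elimination.
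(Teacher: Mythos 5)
Your proof is correct and is essentially the argument the paper intends: Corollary \ref{PAC.def2} is stated there without proof precisely because it is the immediate translation, via Fact \ref{PACclaim}, between stationarity and regularity, combined with the definition of PAC and quantifier elimination. Both directions of your argument (manufacturing the regular extension $\dcl_{\mathcal{L}}^{\mathfrak{C}}(Pa)$ from a stationary type, and recognizing $\tp_{\mathcal{L}}^{\mathfrak{C}}(a/P)$ as stationary inside a given regular extension via Fact \ref{regular.remark}(3)) match the intended derivation, so there is nothing to add.
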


The above corollary might be used as an alternative definition of being a PAC substructure. For more details about other possible versions of the definition of a PAC substructure the reader may consult subsection 3.1 in \cite{Hoff3}.

We note here an easy fact, which can be understood that ``sometimes" (see Lemma \ref{alg.ext.PAC}) being a PAC substructure might be uderstood as being ``one step before being a model" (it is enough to take the algebraic closure - if algebraic closure preserves PAC).

\begin{fact}[Corollary 3.10 in \cite{PilPol}]
Assume that $P$ is a small PAC substructure of $\mathfrak{C}$ such that $\acl_{\mathcal{L}}^{\mathfrak{C}}(P)=P$. Then $P\preceq \mathfrak{C}$.
\end{fact}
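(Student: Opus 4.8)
The plan is to verify the Tarski--Vaught criterion for $P\subseteq\mathfrak{C}$ directly, rather than first establishing $P\models T$ and then invoking model completeness. The point of the hypothesis $\acl_{\mathcal{L}}^{\mathfrak{C}}(P)=P$ is that, by Fact \ref{regular.remark}.(2), \emph{every} small extension $P\subseteq A$ inside $\mathfrak{C}$ is then regular; so the defining property of a PAC substructure (Definition \ref{regular.def}) upgrades to the statement that $P\preceq_1 A$ for every small $\mathcal{L}$-substructure $A$ with $P\subseteq A\subseteq\mathfrak{C}$. This is the only way I would use the PAC assumption.

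For the Tarski--Vaught test, fix an $\mathcal{L}$-formula $\phi(x,\bar y)$ and parameters $\bar a$ from $P$, and suppose $\mathfrak{C}\models\exists x\,\phi(x,\bar a)$; I must produce a witness lying in $P$. Since $T$ eliminates quantifiers, there is a quantifier-free $\psi(x,\bar y)$ with $T\vdash\forall x\,\forall\bar y\,(\phi\leftrightarrow\psi)$, and this equivalence holds in $\mathfrak{C}$, so $\mathfrak{C}\models\exists x\,\psi(x,\bar a)$. Choose $b\in\mathfrak{C}$ with $\mathfrak{C}\models\psi(b,\bar a)$ and put $A:=\dcl_{\mathcal{L}}^{\mathfrak{C}}(Pb)$, a small substructure containing $b$. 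As $\psi$ is quantifier-free and $A\subseteq\mathfrak{C}$, absoluteness gives $A\models\psi(b,\bar a)$, hence $A\models\exists x\,\psi(x,\bar a)$. Now $\exists x\,\psi$ is an existential formula over $P$, so $P\preceq_1 A$ yields $P\models\exists x\,\psi(x,\bar a)$; a witness $b'\in P$ then satisfies $\mathfrak{C}\models\psi(b',\bar a)$ (again by absoluteness), whence $\mathfrak{C}\models\phi(b',\bar a)$ by the equivalence. Thus every existential formula over $P$ true in $\mathfrak{C}$ has a witness in $P$, and Tarski--Vaught gives $P\preceq\mathfrak{C}$.

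The step I expect to require the most care is precisely the passage from ``existentially closed'' to ``fully elementary''. The tempting route --- show $P\preceq_1\mathfrak{C}$, deduce $P\models T$, then use that quantifier elimination makes $T$ model complete --- is awkward because quantifier elimination is an axiom scheme that one cannot assume holds in $P$ before knowing $P\models T$, so the argument threatens to become circular. The Tarski--Vaught formulation avoids this entirely: it invokes the QE-equivalence $\phi\leftrightarrow\psi$ only inside $\mathfrak{C}$, where $T$ holds, and then transfers information between $P$ and $\mathfrak{C}$ solely through quantifier-free absoluteness and the existential-closedness supplied by PAC. I would double-check only the two routine facts being used: that $\acl_{\mathcal{L}}^{\mathfrak{C}}(P)=P$ really makes all extensions regular (Fact \ref{regular.remark}.(2)), and that $A=\dcl_{\mathcal{L}}^{\mathfrak{C}}(Pb)$ is a legitimate small regular extension so that $P\preceq_1 A$ applies.
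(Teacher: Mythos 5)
Your proof is correct and takes essentially the same route as the paper's: the paper likewise observes that $\acl_{\mathcal{L}}^{\mathfrak{C}}(P)=P$ makes every extension of $P$ regular, so that PAC gives existential closedness everywhere, and then concludes via the Tarski--Vaught test (there phrased through a small elementary submodel $M\preceq\mathfrak{C}$ containing $P$, rather than through $\dcl_{\mathcal{L}}^{\mathfrak{C}}(Pb)$ for a witness $b$ as you do). Your more detailed handling of quantifier elimination and quantifier-free absoluteness simply makes explicit what the paper's two-line proof leaves implicit.
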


\begin{proof}
To see this, note that, since $\acl_{\mathcal{L}}^{\mathfrak{C}}(P)=P$, every extension of $P$ is regular. Therefore Tarski-Vaught test implies, that $P\preceq M$ for some small $M\preceq\mathfrak{C}$.
\end{proof}

The following example arose during discussions between Alex Kruckamn and Nick Ramsey, and Ludomir Newelski and us. The example shows that elimination of imaginaries is an important assumption for our purposes. To avoid such inconveniences, one might modify the definition of regularity as was proposed in Remark 3.2.(2) in \cite{Hoff3}. In this example we do not assume elimination of imaginaries for $T$.

\begin{example}\label{example.HKNR}
Consider a language $\mathcal{L}$ consisting only one relation $R$ and a theory $T$ stating that $R$ 
is an equivalence relation and that $R$
has only classes of size $3$. 
Note that $T$ is $\omega$-stable and has quantifier elimination.
Let us choose some monster model $\mathfrak{C}\models T$. We want to construct a PAC substructure of $\mathfrak{C}$.

To do this, consider $2$ countable and disjoint families of equivalence classes of $R$, say $A$ and $B$. Substructure $P$ consists of
\begin{itemize}
\item all elements from every equivalence class belonging to $A$,
\item exactly one element from every equivalence class belonging to $B$.
\end{itemize}
If $P\subseteq N$ is regular, then an intersection of $N$ with any equivalence class belonging to $B$ contains only one element, which we already chose for $P$. It follows that $P$ is existentially closed in $N$ and therefore $P$ is PAC.

Note that adding to $P$ only one element from every equivalence class belonging to $B$ will not produce a PAC substructure. To see this
let $N:=P\cup\lbrace a,b,c\rbrace$, where $\lbrace a,b,c\rbrace\cap P=\emptyset$, then $P\subseteq N$ is regular, but $N$ satisfies sentence stating that there exist three different elements $x$, $y$ and $z$, such that $R(x,y)$ and $R(y,z)$.

The absolute Galois group of $P$ is isomorphic to $(\mathbb{Z}/2\mathbb{Z})^{\omega}$, i.e.
$$\mathcal{G}(P)\cong (\mathbb{Z}/2\mathbb{Z})^{\omega}.$$
By Corollary 22.7.11 in \cite{FrJa}, the absolute Galois group of $P$ can not be projective.
\end{example}

Now, we generalize Lemma 1.17 from \cite{manuscript} (which states that the absolute Galois group of a PAC substructure - in the strongly minimal context - is projective) to our, i.e. stable, context. Of course, we still assume that $T$ is stable and has quantifier elimination and elimination of imaginaries.

\begin{theorem}\label{PAC.proj}
If a small $N$ is PAC, then $\mathcal{G}(N)$ is projective.
\end{theorem}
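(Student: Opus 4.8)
The plan is to verify projectivity of $\mathcal{G}(N)$ through the standard criterion that a profinite group is projective precisely when every finite embedding problem admits a weak solution (see \cite{FrJa}): given finite groups $A,B$, an epimorphism $\alpha\colon B\to A$ and a continuous epimorphism $\varphi\colon\mathcal{G}(N)\to A$, one must produce a continuous homomorphism $\psi\colon\mathcal{G}(N)\to B$ with $\alpha\circ\psi=\varphi$. Since $N$ is PAC it is definably closed, so $N\subseteq\acl_{\mathcal{L}}^{\mathfrak{C}}(N)$ is a Galois extension and $\mathcal{G}(N)$ is profinite by Fact \ref{fact317}. First I would use the Galois correspondence (Fact \ref{galois.correspondence}) to translate $\varphi$ into algebraic data: $\ker\varphi$ is a closed normal subgroup of finite index, hence corresponds to a finite Galois extension $N\subseteq C\subseteq\acl_{\mathcal{L}}^{\mathfrak{C}}(N)$ with $\aut_{\mathcal{L}}(C/N)\cong A$, the isomorphism identifying $\varphi$ with the restriction map $\sigma\mapsto\sigma|_C$.

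Next I would feed this into Proposition \ref{fr.ja.133}, applied to the Galois extension $N\subseteq C$ and the epimorphism $B\to A\cong\aut_{\mathcal{L}}(C/N)$ (note that $B$ is finite, so the index set $X$ and the Morley sequence $\bar b$ occurring in that proof are finite). This yields small substructures $M_0\subseteq M$ with $N\subseteq M_0$ and $C\subseteq M$ regular, $M_0\subseteq M$ Galois, and an isomorphism $\beta\colon B\to\aut_{\mathcal{L}}(M/M_0)$ restricting to the prescribed map on $C$. Now regularity of $N\subseteq M_0$ gives, via Lemma \ref{regular.Galois}, a surjective restriction map $r\colon\mathcal{G}(M_0)\twoheadrightarrow\mathcal{G}(N)$, while the Galois extension $M_0\subseteq M$ gives a surjection $\rho\colon\mathcal{G}(M_0)\twoheadrightarrow\aut_{\mathcal{L}}(M/M_0)\cong B$; the compatibility recorded in Proposition \ref{fr.ja.133} makes the resulting square with $\alpha$ and $\varphi$ commute. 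Thus it suffices to construct a continuous section $s\colon\mathcal{G}(N)\to\mathcal{G}(M_0)$ of $r$, for then $\psi:=\beta^{-1}\circ\rho\circ s$ satisfies $\alpha\circ\psi=\varphi$ and is the desired weak solution.

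The construction of $s$ is where the PAC hypothesis, and the main difficulty, resides. The substructure $M_0$ is obtained from $N$ by adjoining (symmetric functions of) the Morley sequence $\bar b$, which realizes a non-algebraic stationary type over $N$ (Fact \ref{cor.stationary_types_exist}); the regular, purely generic nature of this adjunction is what ought to split the restriction map. Here I would invoke the PAC property in the form of Corollary \ref{PAC.def2}: every stationary type over $N$ is finitely satisfiable in $N$, so $\bar b$ admits a specialization to parameters inside $N$ --- a ``rational point'' on the generic configuration --- and I would use Fact \ref{regular.PAPA} to transport automorphisms of $\acl_{\mathcal{L}}^{\mathfrak{C}}(N)$ along this specialization, the independence of $\bar b$ from $\acl_{\mathcal{L}}^{\mathfrak{C}}(N)$ over $N$ guaranteeing that the transported maps fix $M_0$ and agree with the prescribed $N$-automorphism on $\acl_{\mathcal{L}}^{\mathfrak{C}}(N)$. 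The step I expect to be the crux is to carry this out coherently enough that $s$ is a genuine continuous group homomorphism compatible with $\rho$, so that $\psi$ actually lands in $B$ and lifts $\varphi$; that coherence, as opposed to a mere pointwise lifting, is exactly what the PAC condition supplies and what can fail otherwise, as Example \ref{example.HKNR} illustrates.
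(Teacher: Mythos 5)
Your setup is sound and agrees with the paper up to its midpoint: the finite-embedding-problem criterion, the Galois-correspondence step turning $\varphi$ into the restriction map onto $\aut_{\mathcal{L}}(C/N)$, and the construction of a Galois extension $M_0\subseteq M$ with group $B$ sitting regularly over $N$ (the paper builds this by hand from a Morley sequence of a stationary type, which is exactly the finite case of Proposition \ref{fr.ja.133}, so citing that proposition is legitimate). Your reduction is also formally correct: a continuous homomorphic section $s$ of $r\colon\mathcal{G}(M_0)\twoheadrightarrow\mathcal{G}(N)$ would yield the lift $\psi=\beta^{-1}\circ\rho\circ s$. The genuine gap is that you never construct $s$, and the sketch you give cannot be repaired as stated. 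Finite satisfiability (Corollary \ref{PAC.def2}) only realizes each \emph{single} formula of a stationary type in $N$; since the relevant types are non-algebraic, neither $\tp_{\mathcal{L}}^{\mathfrak{C}}(\bar b/N)$ nor the quantifier-free type of an enumeration of $M_0$ over $N$ can be realized inside $N$ itself. This is why the paper introduces the pair $(D,N)$ with $N\subseteq D\preceq\mathfrak{C}$, passes to an $|N|^{+}$-saturated elementary extension $(D',N')$, and places the ``rational point'' $f(M_0)$ inside $N'\succeq N$ (with $f$ fixing $\acl_{\mathcal{L}}^{\mathfrak{C}}(N)$ pointwise, via Fact \ref{regular}), replacing $\mathcal{G}(N)$ by the isomorphic group $\aut_{\mathcal{L}}\big(\dcl_{\mathcal{L}}^{\mathfrak{C}}(\acl_{\mathcal{L}}^{\mathfrak{C}}(N),N')/N'\big)$.

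Second, and this is the heart of the matter: even with that copy in hand, ``transporting automorphisms'' is not a restriction map, because $f(\bar b)$ lies in $\acl_{\mathcal{L}}^{\mathfrak{C}}(N')$ but in general \emph{not} in $\dcl_{\mathcal{L}}^{\mathfrak{C}}(\acl_{\mathcal{L}}^{\mathfrak{C}}(N),N')$, so the canonical extension of $\sigma\in\mathcal{G}(N)$ fixing $N'$ has no action on $f(M)$ at all. The paper's resolution is the combinatorial core of the proof: encode the entire finite configuration (the orbit decomposition of $f(\bar b)$ over $N'$ and the inequivalences $(\ast)$ of tuples) into one formula $\theta$, transfer it from $(D',N')$ down to $(D,N)$ by elementarity to produce a tuple $\bar b'\subseteq D\cap\acl_{\mathcal{L}}^{\mathfrak{C}}(N)$, and then match the two finite Galois groups by the map $\Delta$; only the composite of these finite restrictions lands in $B$ and commutes with restriction to $L$. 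None of this is supplied by your appeal to ``coherence from PAC,'' which you yourself flag as the unresolved crux. Note finally that aiming for a full section of $r$ overshoots the target: by the standard characterization of projectivity (every continuous epimorphism onto the group splits), the assertion that every such $r$ splits is essentially the theorem itself, whereas the paper never splits $r$ --- it only ever produces a homomorphism into the \emph{finite} group $B$, which is exactly what a finite amount of definable data can deliver.
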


\begin{proof}
Assume that for some finite groups $A$ and $B$ we have continuous epimorphisms
$\rho:\mathcal{G}(N)\to A$ and
$\alpha:B\to A$. We will find 
$$\gamma:\mathcal{G}(N)\to B$$
such that $\rho=\alpha\gamma$,
$$\xymatrixcolsep{5pc}\xymatrix{\mathcal{G}(N)\ar@{->>}[r]^-{\rho} \ar@{-->}[dr]_-{\gamma}& A \\ & B \ar@{->>}[u]_{\alpha}}$$
\
\\
\\
\underline{$A\rightsquigarrow\aut_{\mathcal{L}}(L/N)$}

Because $\rho$ is continuous, $\ker\rho$ is a closed subgroup of $\mathcal{G}(N)$. The Galois correspondence, Fact \ref{galois.correspondence}, implies that for $L:=\acl_{\mathcal{L}}^{\mathfrak{C}}(N)^{\ker\rho}$, $L$ is definably closed and
$$\ker\rho=\aut_{\mathcal{L}}\Big(\acl_{\mathcal{L}}^{\mathfrak{C}}(N)/L\Big).$$
By Fact \ref{fact314}, $N\subseteq L$ is Galois. Thus, by Fact \ref{fact.exact}, $A\cong\aut_{\mathcal{L}}(L/N)$. Therefore, without loss of generality, we can assume that $A=\aut_{\mathcal{L}}(L/N)$ and $\rho=|_L$,
$$\xymatrixcolsep{5pc}\xymatrix{\mathcal{G}(N)\ar@{->>}[r]^-{|_L} & \aut_{\mathcal{L}}(L/N) \\ & B \ar@{->>}[u]_{\alpha}}$$
\
\\
\\
\underline{$B\rightsquigarrow\aut_{\mathcal{L}}(M/M^B)$}

By Fact \ref{cor.stationary_types_exist}, there exists a non-algebraic stationary type over $N$ (in the sense of $\mathfrak{C}$), say $p(x)$. Take elements from a Morley sequence of $p(x)$, 
say $b_1,\ldots, b_n\in\mathfrak{C}$, where $n:=|B|$,
such that
$$L\ind^{\mathfrak{C}}_N b_1\ldots b_n.$$
By multiple use of Fact \ref{lang413}, we obtain a sequence of regular extensions
$$N\subseteq Nb_1\subseteq Nb_1b_2\subseteq\ldots\subseteq Nb_1\ldots b_n,$$
hence, by Fact \ref{regular.remark}.(4), $N\subseteq Nb_1\ldots b_n$ is also regular. Stability implies that $b_1\ldots b_n$ is $N$-indiscernible as a set, hence for every $\sigma\in S_n$ (permutation of $n$-many elements) there exists $h_\sigma\in\aut_{\mathcal{L}}(\mathfrak{C}/N)$ such that $h_{\sigma}(b_i)=b_{\sigma(i)}$. Without loss of generality, $B\leqslant S_n$. Note that $B$ acts on $L$:
$$g(m)=\alpha(g)\big(m\big),$$
where $g\in B$ and $m\in L$. Moreover, $B$ acts on $b_1\ldots b_n$ by $h_\sigma$, where $\sigma\in B$.
Fact \ref{regular.PAPA} allows us to extend these both actions of group $B$ to an action on $M:=\dcl_{\mathcal{L}}^{\mathfrak{C}}(Lb_1\ldots b_n)$. Now, we will treat $B$ as a subgroup of $\aut_{\mathcal{L}}(M/M^B)$.

Since $B$ is finite, Lemma \ref{N_Galois}.(3) implies that $M^B\subseteq M$ is Galois and $M\subseteq\acl_{\mathcal{L}}^{\mathfrak{C}}(M^B)$. Moreover, $B$ as a finite group is a closed subgroup of $\aut_{\mathcal{L}}(M/M^B)$. Therefore, the Galois correspondence (Fact \ref{galois.correspondence}) and 
$$M^B=M^{\aut_{\mathcal{L}}(M/M^B)},$$
imply that $B=\aut_{\mathcal{L}}(M/M^B)$. Again, without loss of generality, we change the set-up:
$$\xymatrixcolsep{5pc}\xymatrix{\mathcal{G}(N)\ar@{->>}[r]^-{|_L} & \aut_{\mathcal{L}}(L/N) \\ & \aut_{\mathcal{L}}(M/M^B) \ar@{->>}[u]_{|_L}}$$
\
\\
\\
\underline{$N\subseteq M^B$ is regular}

(Similar argument is used in the proof of Proposition \ref{fr.ja.133})
Now, we will show that $M^B\cap L= N$. Of course $N\subseteq M^B\cap L$. Let $m\in M^B\cap L$, i.e. $m\in L$ and for all $\sigma\in B$ we have $m=\sigma(m)=\alpha(\sigma)(m)$. Because $\alpha$ is an epimorphism, it follows that for each $f\in\aut_{\mathcal{L}}(L/N)$ we have $f(m)=m$, hence $m\in N$.

Recall that $L\ind_N^{\mathfrak{C}}b_1\ldots b_n$, $N\subseteq Nb_1\ldots b_n$ is regular and $M=\dcl_{\mathcal{L}}^{\mathfrak{C}}(Lb_1\ldots b_n)$. Fact \ref{lang413} implies that $L\subseteq M$ is regular.
We have
$$M\cap\acl_{\mathcal{L}}^{\mathfrak{C}}(L)=L,\quad \acl_{\mathcal{L}}^{\mathfrak{C}}(L)=\acl_{\mathcal{L}}^{\mathfrak{C}}(N),$$
$$M\cap\acl_{\mathcal{L}}^{\mathfrak{C}}(N)=L,$$
$$M^B\cap\acl_{\mathcal{L}}^{\mathfrak{C}}(N)=M^B\cap L=N,$$
i.e. $N\subseteq M^B$ is regular. 
\
\\
\\
\underline{$N\rightsquigarrow N'$ which contains a copy of $M^B$}

Let $\bar{c}\subseteq\mathfrak{C}$ be such that $M^B=\dcl_{\mathcal{L}}^{\mathfrak{C}}(N\bar{c})$ ($\bar{c}$ can be an enumeration of $M^B$). Moreover, we introduce a set of $\mathcal{L}\cup\lbrace N\rbrace$-formulas,
$$q(\bar{x}):=\qftp_{\mathcal{L}}^{\mathfrak{C}}(\bar{c}/N)\;\cup\;\lbrace \bar{x}_0\subseteq N\;|\;\bar{x}_0\subseteq\bar{x}\rbrace.$$
Take a small $D\preceq\mathfrak{C}$ such that $N\subseteq D$.
\
\\
\textbf{Claim:} It follows that $\theo_{\mathcal{L}\cup\lbrace N\rbrace}(D)\cup q(\bar{x})$ is consistent.
\\
\textit{Proof of the claim:} Since $N$ is PAC and $N\subseteq M^B$ is regular, we have $N\preceq_1 M^B$. Therefore if $\varphi(n,\bar{x}_0)\in\qftp_{\mathcal{L}}^{\mathfrak{C}}(\bar{c}/N)$, it is $M^B\models\exists \bar{x}_0\;\varphi(n,\bar{x}_0)$ and so also $N\models\exists \bar{x}_0\;\varphi(n,\bar{x}_0)$ and $(D,N)\models(\exists \bar{x}_0)\big(\bar{x}_0\subseteq N\;\wedge\;\varphi(n,\bar{x}_0)\big)$.

Consider $(D',N')\succeq (D,N)$ which is $|N|^{+}$-saturated. Without loss of generality: $D\preceq D'\preceq\mathfrak{C}$. Note that $N'\succeq N$ and $\dcl_{\mathcal{L}}^{\mathfrak{C}}(N')=N'$.

There exists $\bar{c}'\subseteq N'$ such that $\bar{c}'\models\qftp_{\mathcal{L}}^{\mathfrak{C}}(\bar{c}/N)$. Quantifier elimination in $T$ implies that $\bar{c}'\models\tp_{\mathcal{L}}^{\mathfrak{C}}(\bar{c}/N)$, thus there exists $f\in\aut_{\mathcal{L}}(\mathfrak{C}/N)$ such that $f(\bar{c})=\bar{c}'$. Since $N\subseteq N\bar{c}$ is regular, Fact \ref{regular} allows us to assume that $f\in\aut_{\mathcal{L}}(\mathfrak{C}/\acl_{\mathcal{L}}^{\mathfrak{C}}(N))$.

Note that
$$f(M^B)=f\big(\dcl_{\mathcal{L}}^{\mathfrak{C}}(N\bar{c})\big)=\dcl_{\mathcal{L}}^{\mathfrak{C}}\big(Nf(\bar{c})\big)=\dcl_{\mathcal{L}}^{\mathfrak{C}}(N\bar{c}')\subseteq\dcl_{\mathcal{L}}^{\mathfrak{C}}(N')=N'$$
and
$$f(M)\subseteq f\big(\acl_{\mathcal{L}}^{\mathfrak{C}}(M^B)\big)=\acl_{\mathcal{L}}^{\mathfrak{C}}\big(f(M^B)\big)\subseteq \acl_{\mathcal{L}}^{\mathfrak{C}}(N').$$
We have a group isomorphism
$$F:\aut_{\mathcal{L}}(M/M^B)\ni h\mapsto fhf^{-1}\in\aut_{\mathcal{L}}\big(f(M)/f(M^B)\big).$$

Because $N\preceq N'$, we conclude, by Fact \ref{lang410}, that $N\subseteq N'$ is regular. Hence, by Fact \ref{regular}, the following map
$$H:\aut_{\mathcal{L}}\Big(\dcl_{\mathcal{L}}^{\mathfrak{C}}\big(\acl_{\mathcal{L}}^{\mathfrak{C}}(N),N'\big)/ N'\Big)\ni h\mapsto h|_{\acl_{\mathcal{L}}^{\mathfrak{C}}(N)}\in\mathcal{G}(N)$$
is onto and therefore a group isomorphism.
\
\\
\\
\underline{Almost final diagram}

Since $A\cong\aut_{\mathcal{L}}(L/N)$ is finite, we can choose a finite $\bar{a}\subseteq\acl_{\mathcal{L}}^{\mathfrak{C}}(N)$. $|\bar{a}|=m$, such that $L=\dcl_{\mathcal{L}}^{\mathfrak{C}}(N\bar{a})$ and $\aut_{\mathcal{L}}(L/N)\cdot\bar{a}=\bar{a}$. Thus $M=\dcl_{\mathcal{L}}^{\mathfrak{C}}(L\bar{b})=\dcl_{\mathcal{L}}^{\mathfrak{C}}(N\bar{a}\bar{b})$.
To this point, we have:
$$\xymatrixcolsep{1.1pc}\xymatrix{\mathcal{G}(N)\ar[d]_{H^{-1}}^{\cong}\ar@{->>}[rr]^-{|_L} & & \aut_{\mathcal{L}}(L/N) \\ 
\aut_{\mathcal{L}}\Big(\faktor{\dcl_{\mathcal{L}}^{\mathfrak{C}}\big(\acl_{\mathcal{L}}^{\mathfrak{C}}(N),N'\big)}{ N'}\Big) \ar@{-->}[dd] & & \aut_{\mathcal{L}}(M/M^B) \ar@{->>}[u]_{|_L} \\
 & \aut_{\mathcal{L}}\Big(\faktor{\dcl_{\mathcal{L}}^{\mathfrak{C}}\big(L,f(\bar{b})\big)}{f(M^B)} \Big)\ar@{-}[r]^-{=}& \aut_{\mathcal{L}}\big(f(M)/f(M^B)\big) \ar[u]_{F^{-1}}^{\cong} \\
 ? \ar@{-->}[r] & \aut_{\mathcal{L}}\Big(\faktor{\dcl_{\mathcal{L}}^{\mathfrak{C}}\big(N',\bar{a},f(\bar{b})\big)}{N'} \Big) \ar@{-}[r]_-{=} &  \aut_{\mathcal{L}}\Big(\faktor{\dcl_{\mathcal{L}}^{\mathfrak{C}}\big(N',L,f(\bar{b})\big)}{N'} \Big) \ar[ul]_(.35){|_{\dcl_{\mathcal{L}}^{\mathfrak{C}}(L,f(\bar{b}))}}
 }$$
We are done if we can find a proper group in the place of ``?".

\
\\
\\
\underline{Finding $\bar{b'}$}

Since $f(M^B)\subseteq N'$, $f(M^B)\subseteq f(M)$ is Galois and $\aut_{\mathcal{L}}(M/M^B)\cdot\bar{b}=B\cdot\bar{b}=\bar{b}$,

\begin{IEEEeqnarray*}{rCl}
\aut_{\mathcal{L}}(\mathfrak{C}/N')\cdot f(\bar{b}) &\subseteq & 
\aut_{\mathcal{L}}(\mathfrak{C}/f(M^B))\cdot f(\bar{b}) \\
&=& \aut_{\mathcal{L}}(f(M)/f(M^B))\cdot f(\bar{b})\\
&=&f\big(\aut_{\mathcal{L}}(M/M^B)\cdot \bar{b}\big)\\
&=&f(\bar{b}).
\end{IEEEeqnarray*}
Let
\begin{IEEEeqnarray*}{rCl}
\aut_{\mathcal{L}}(\mathfrak{C}/N')\cdot f(\bar{b}) & = & f(\bar{b})=\\
& = & \aut_{\mathcal{L}}(\mathfrak{C}/N')\cdot f(b_{i_1})\;\cupdot\; \ldots \;
\cupdot\; \aut_{\mathcal{L}}(\mathfrak{C}/N')\cdot f(b_{i_s})
\end{IEEEeqnarray*}
For each $k\leqslant s$ we choose $\varphi(d,y)$ ($d\subseteq N'$ will be dynamically extended...) such that
$$\aut_{\mathcal{L}}(\mathfrak{C}/N')\cdot f(b_{i_k})=\varphi_k(d,\mathfrak{C}).$$
Note that $\aut_{\mathcal{L}}\Big(\dcl_{\mathcal{L}}^{\mathfrak{C}}\big(N',\bar{a},f(\bar{b})\big)/N' \Big)$, as determined by values on $\bar{a}f(\bar{b})$, is finite and if there is no $h\in\aut_{\mathcal{L}}\Big(\dcl_{\mathcal{L}}^{\mathfrak{C}}\big(N',\bar{a},f(\bar{b})\big)/N' \Big)$ such that
$$h\big(a_1\ldots a_mf(b_1)\ldots f(b_n)\big)=a_{\sigma(1)}\ldots a_{\sigma(m)} f(b_{\sigma'(1)})\ldots f(b_{\sigma'(n)}),$$
then there is no such $h$ in $\aut_{\mathcal{L}}(\mathfrak{C}/N')$ (since $N'\subseteq \dcl_{\mathcal{L}}^{\mathfrak{C}}\big(N',\bar{a},f(\bar{b})\big)$ is normal) and hence
\begin{equation}\tag{$\ast$}
a_1\ldots a_mf(b_1)\ldots f(b_n)\not\equiv_{N'}a_{\sigma(1)}\ldots a_{\sigma(m)} f(b_{\sigma'(1)})\ldots f(b_{\sigma'(n)}).
\end{equation}
We choose a formula $\psi_{\sigma,\sigma'}$ such that
$$\models \psi_{\sigma,\sigma'}\big(d,a_1,\ldots,a_m,f(b_1),\ldots,f(b_n)\big),$$
$$\models \neg\psi_{\sigma,\sigma'}\big(d,a_{\sigma(1)},\ldots,a_{\sigma(m)},f(b_{\sigma'(1)}),\ldots,f(b_{\sigma'(n)})\big).$$
Now, we introduce an $\mathcal{L}$-formula $\theta(d,\bar{a})$ given by
\begin{IEEEeqnarray*}{rl}
(\exists y_1,\ldots, y_n) &\Big(\;\bigwedge\limits_{k<i_2}\varphi_{i_1}(d,y_k)\;\wedge\;(\forall y)\big(\varphi_{i_1}(d,y)\rightarrow \bigvee\limits_{k<j_2}y=y_k\big)\;\wedge\\
& \qquad\qquad\qquad\vdots\\
& \quad\bigwedge\limits_{i_s\leqslant k\leqslant n}\varphi_{i_s}(d,y_k)\;\wedge\;(\forall y)\big(\varphi_{i_s}(d,y)\rightarrow \bigvee\limits_{i_s\leqslant k\leqslant n}y=y_k\big)\;\wedge\\
& \bigwedge\limits_{(\sigma,\sigma')\text{ as in }(\ast)}
\psi_{\sigma,\sigma'}\big(d,a_1,\ldots,a_m,y_1,\ldots,y_n\big)\;\wedge \\
& \qquad\neg\psi_{\sigma,\sigma'}\big(d,a_{\sigma(1)},\ldots,a_{\sigma(m)},y_{\sigma'(1)},\ldots,y_{\sigma'(n)}\big)
\Big).
\end{IEEEeqnarray*}
We have
$$(D',N')\models \theta(d,\bar{a}),$$
$$(D',N')\models \exists z\;\big(z\subseteq N'\;\wedge\;\theta(z,\bar{a})\big),$$
$$(D,N)\models \exists z\;\big(z\subseteq N\;\wedge\;\theta(z,\bar{a})\big).$$
Let $d'\subseteq N$ be such that $D\models \theta(d'\bar{a})$ and let $b_1',\ldots,b_n'\subseteq D$ witness existence of $y_1,\ldots,y_n$ for $\theta(d',\bar{a})$ in $D$. We see that $\bar{b}'\subseteq\acl_{\mathcal{L}}^{\mathfrak{C}}(N)$ and
$$\aut_{\mathcal{L}}(\mathfrak{C}/N')\cdot\bar{b}'\subseteq\aut_{\mathcal{L}}(\mathfrak{C}/N)\cdot\bar{b}'=\bar{b}'.$$
Therefore $N'\subseteq\dcl_{\mathcal{L}}^{\mathfrak{C}}(N',\bar{a},\bar{b}')$ is Galois and there is a restriction map:
$$\aut_{\mathcal{L}}\Big(\dcl_{\mathcal{L}}^{\mathfrak{C}}\big(\acl_{\mathcal{L}}^{\mathfrak{C}}(N), N'\big)/N'\Big)\to
\aut_{\mathcal{L}}\big(\dcl_{\mathcal{L}}^{\mathfrak{C}}(N',\bar{a},\bar{b}')/N'\big).$$
\
\\
\\
\underline{Final diagram}

The last map we need is the following one
$$\Delta:\aut_{\mathcal{L}}\big(\dcl_{\mathcal{L}}^{\mathfrak{C}}(N',\bar{a},\bar{b}')/N'\big)\to\aut_{\mathcal{L}}\big(\dcl_{\mathcal{L}}^{\mathfrak{C}}(N',\bar{a},f(\bar{b}))/N'\big)$$
and we define it in the following way. Let $h\in\aut_{\mathcal{L}}\big(\dcl_{\mathcal{L}}^{\mathfrak{C}}(N',\bar{a},\bar{b}')/N'\big)$ and
$$h(a_1\ldots a_m b_1'\ldots b_n')=a_{\sigma(1)}\ldots a_{\sigma(m)} b_{\sigma'(1)}'\ldots b_{\sigma(n)}'.$$
There exists $\tilde{h}\in \aut_{\mathcal{L}}\big(\dcl_{\mathcal{L}}^{\mathfrak{C}}(N',\bar{a},f(\bar{b}))/N'\big)$ such that
$$\tilde{h}\big(a_1\ldots a_m f(b_1)\ldots f(b_n)\big)=a_{\sigma(1)}\ldots a_{\sigma(m)} f(b_{\sigma'(1)})\ldots f(b_{\sigma(n)})$$
(otherwise $(\sigma,\sigma')$ would satisfy $(\ast)$, but then
$$\models \psi_{\sigma,\sigma'}\big(d',a_1,\ldots,a_m,b_1',\ldots,b_n'\big),$$
$$\models \neg\psi_{\sigma,\sigma'}\big(d',a_{\sigma(1)},\ldots,a_{\sigma(m)},b_{\sigma'(1)}',\ldots,b_{\sigma'(n)}'\big)$$
which contradicts the existence of $h$). We set $\Delta(h):=\tilde{h}$.

Now, we put all the above together:
$$\xymatrixcolsep{1.1pc}\xymatrix{\mathcal{G}(N)\ar[d]_{H^{-1}}^{\cong}\ar@{->>}[rr]^-{|_L} & & \aut_{\mathcal{L}}(L/N) \\ 
\aut_{\mathcal{L}}\Big(\faktor{\dcl_{\mathcal{L}}^{\mathfrak{C}}\big(\acl_{\mathcal{L}}^{\mathfrak{C}}(N),N'\big)}{ N'}\Big) \ar[dd]_{|_{\dcl_{\mathcal{L}}^{\mathfrak{C}}(N',\bar{a},\bar{b}')}} & & \aut_{\mathcal{L}}(M/M^B) \ar@{->>}[u]_{|_L} \\
 & \aut_{\mathcal{L}}\Big(\faktor{\dcl_{\mathcal{L}}^{\mathfrak{C}}\big(L,f(\bar{b})\big)}{f(M^B)} \Big)\ar@{-}[r]^-{=}& \aut_{\mathcal{L}}\big(f(M)/f(M^B)\big) \ar[u]_{F^{-1}}^{\cong} \\
 \aut_{\mathcal{L}}\Big(\faktor{\dcl_{\mathcal{L}}^{\mathfrak{C}}(N',\bar{a},\bar{b}')}{N'}\Big) \ar[r]_-{\Delta} & \aut_{\mathcal{L}}\Big(\faktor{\dcl_{\mathcal{L}}^{\mathfrak{C}}\big(N',\bar{a},f(\bar{b})\big)}{N'} \Big) \ar@{-}[r]_-{=} &  \aut_{\mathcal{L}}\Big(\faktor{\dcl_{\mathcal{L}}^{\mathfrak{C}}\big(N',L,f(\bar{b})\big)}{N'} \Big) \ar[ul]_(.35){|_{\dcl_{\mathcal{L}}^{\mathfrak{C}}(L,f(\bar{b}))}}
 }$$
 
The above diagram commutes, since ``the long path" does not do anything with values of autmorphisms on $L=\dcl_{\mathcal{L}}^{\mathfrak{C}}(N\bar{a})$.
\end{proof}

\subsection{Projective profinite group as absolute Galois group}
In this subsection, we show that property PP$_T$ holds for a subclass of the class of stable theories. The only issue not allowing us to extend our result over all stable theories is the fact that sometimes 
algebraic closure of a PAC substructure is not PAC, which seems rather strange if we remember that ``PAC" states for ``pseudo-algebraically closed". However, many interesting stable theories satisfy a simplified version of the main assumption of Lemma \ref{alg.ext.PAC}:
\begin{center}
a type over $A$ has only finitely many extensions over $\acl_{\mathcal{L}}^{\mathfrak{C}}(A)$,
\end{center}
which holds for any type in e.g. any $\omega$-stable theory.
Because we did not achieve property PP$_T$ for arbitrary stable $T$,
we consider a modification of property PP$_T$:
\begin{enumerate}
\item[(PP$_T^{\ast}$)] if profinite $G$ is projective then $G\cong\mathcal{G}(P)$ for some definably closed substructure $P$ of $\mathfrak{C}$
\end{enumerate}
which holds for any stable theory $T$ (with quantifier elimination and elimination of imaginaries) - see Theorem \ref{fr.ja.2311}.
Since the absolute Galois group of a PAC substructure is projective for any stable $T$, the right-to-left implication in PP$_T$ is true for any stable $T$. Therefore property PP$_T^{\ast}$ might be understood as a weaker version of the left-to-right implication in PP$_T$.

The following lemma is a simple modification of Proposition 3.9 in \cite{PilPol} (related to our alternative definition of a PAC substructure), which generalizes a well known fact about PAC fields: any algebraic extension of a PAC field is PAC field. Proof of our slight modification is based on the original proof of Proposition 3.9 in \cite{PilPol}, but for the reader's convenience, instead of listing all the small differences, we provide the whole proof.

\begin{lemma}\label{alg.ext.PAC}
Let $P$ be a small PAC substructure of $\mathfrak{C}$, and let 
$P\subseteq Q=\dcl_{\mathcal{L}}^{\mathfrak{C}}(Q)\subseteq Q'\subseteq\acl_{\mathcal{L}}^{\mathfrak{C}}(P)$, where $Q$ and $Q'$ are substructures of $\mathfrak{C}$ such that $P\subseteq Q'$ is normal (e.g. $Q'=\acl_{\mathcal{L}}^{\mathfrak{C}}(P)$).
If any type over $P$ has only finitely many (non-forking) extensions over $Q'$, then $Q$ is PAC.
\end{lemma}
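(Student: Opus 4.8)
The goal is to show that $Q$ is PAC, i.e. whenever $Q\subseteq N'$ is a regular extension, then $Q\preceq_1 N'$. By Corollary~\ref{PAC.def2}, this is equivalent to showing that every stationary type over $Q$ is finitely satisfiable in $Q$. So I would take a stationary type $r(x)\in S(Q)$ and a realization $a\models r$, and reduce the problem to the PAC property of $P$, which is given. The key structural feature to exploit is that $P\subseteq Q\subseteq Q'\subseteq\acl_{\mathcal{L}}^{\mathfrak{C}}(P)$ with $Q$ definably closed and $P\subseteq Q'$ normal, together with the finiteness hypothesis: each type over $P$ has only finitely many non-forking extensions to $Q'$.

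\emph{Main reduction.}
Let $p:=\tp_{\mathcal{L}}^{\mathfrak{C}}(a/P)$ be the restriction of $r$ to $P$. The idea is to ``descend'' the problem of realizing $r$ in $Q$ to the problem of realizing $p$ in $P$, using that $P$ is PAC. First I would arrange that $a\ind_P^{\mathfrak{C}} Q$; since $Q\subseteq\acl_{\mathcal{L}}^{\mathfrak{C}}(P)$, forking-independence from $Q$ over $P$ is cheap to secure by moving $a$ by an automorphism fixing $P$ (this does not change $p$ and, because $r$ is stationary hence the unique nonforking extension of $p|\cdots$, I must be careful to keep track of the type over $Q$; here is where I expect to use Fact~\ref{PACclaim} and the stationarity of $r$). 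Then $P\subseteq Pa$ is regular (by Fact~\ref{PACclaim}, since $p$ is the restriction of the stationary $r$, one checks $p$ is itself stationary), so by the PAC property of $P$ the type $p$ is finitely satisfiable in $P$: there are elements of $P$ realizing every finite part of $p$.

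\emph{From $P$ up to $Q$.}
The crux is to upgrade ``realized in $P$'' to ``realized in $Q$'' for the full type $r$ over $Q$, and this is exactly where the finiteness hypothesis enters. Given a formula $\varphi(x,q)\in r$ with $q\in Q\subseteq\acl_{\mathcal{L}}^{\mathfrak{C}}(P)$, the parameter $q$ lies over $P$ in only finitely many conjugates; using that $P\subseteq Q'$ is normal and that there are only finitely many nonforking extensions of $p$ over $Q'$, I can encode the condition ``$\varphi(x,q)$'' by an $\mathcal{L}(P)$-formula $\psi(x)$ asserting the existence of a conjugate of $q$ over $P$ for which $\varphi$ holds — the finiteness guarantees this existential is a genuine first-order formula over $P$ (a finite disjunction over the conjugates/extensions). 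Solving $\psi$ in $P$ (possible since $p$ is finitely satisfiable in $P$) produces a witness, and then normality together with $Q=\dcl_{\mathcal{L}}^{\mathfrak{C}}(Q)$ lets me transport this witness back to a solution of $\varphi(x,q)$ with parameters available in $Q$. Iterating/compactifying over finite subsets of $r$ gives finite satisfiability of $r$ in $Q$.

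\emph{The main obstacle.}
The delicate point — and the reason the finiteness hypothesis is not optional (cf.\ Remark~\ref{counterexamples.alg.PAC}) — is the bookkeeping in the previous step: without a finite bound on the number of extensions of $p$ to $Q'$, the ``disjunction over conjugates'' is not expressible as a single first-order formula, so one cannot push a $P$-solution back up to a $Q$-solution, and indeed $Q$ may fail to be PAC. I expect the technical heart of the argument to be verifying that, under the finiteness assumption, the relevant type-counting lets one replace the genuinely infinitary ``there is some conjugate'' by a finite first-order disjunction, and checking that a witness obtained in $P$ can be matched to the specific parameters appearing in the given formula of $r$ via an automorphism fixing $P$ setwise on the finite conjugate-set. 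This is essentially the content of the original argument for Proposition~3.9 in \cite{PilPol}, adapted to the present regular/stationary formulation.
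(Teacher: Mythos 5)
There is a genuine gap, and it sits at the pivot of your argument: the claim that $p:=r|_P$ is stationary because $r$ is. Restrictions of stationary types need not be stationary, and in the present setting this failure is precisely the difficulty of the lemma. Concretely, in Example~\ref{example.HKNR} take a class $\{a,b,c\}$ belonging to $B$ with $a\in P$ and $b,c\notin P$, and put $Q:=\dcl_{\mathcal{L}}^{\mathfrak{C}}(P\cup\{b\})=P\cup\{b,c\}$ and $Q':=\acl_{\mathcal{L}}^{\mathfrak{C}}(P)$; the hypotheses of the lemma hold here (every type over $P$ has only finitely many extensions over $Q'$). The type $r=\tp_{\mathcal{L}}^{\mathfrak{C}}(b/Q)$ is stationary, but its restriction $p$ to $P$ contains the formula $R(x,a)\wedge x\neq a$, whose only solutions are $b,c\notin P$. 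So $p$ has two distinct non-forking extensions over $\acl_{\mathcal{L}}^{\mathfrak{C}}(P)$ (hence is not stationary) and, worse, $p$ is not finitely satisfiable in $P$. Your appeal to Corollary~\ref{PAC.def2} for $P$ (``so by the PAC property of $P$ the type $p$ is finitely satisfiable in $P$'') is therefore unjustified and in fact false. The same example shows the second step cannot be repaired either: the finite disjunction over $P$-conjugates of the parameter, $\psi(x)=\varphi(x,b)\vee\varphi(x,c)$ with $\varphi(x,y)$ being $x=y$, has no solution in $P$ at all; the required witness ($b$ itself) lies in $Q\setminus P$, so no strategy that first produces witnesses inside $P$ and then moves them by automorphisms over $P$ can succeed.

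The missing idea is the device the paper uses to manufacture a type over $P$ that genuinely \emph{is} stationary. List the finitely many extensions $p_1,\dots,p_n\in S(Q')$ of $p$ (with $p_1\supseteq r$; each $p_i$ is stationary by normality of $Q'$ over $P$), take realizations $d_1\ldots d_n$ of the Morley product $\bigotimes_{i\leqslant n}p_i$, and let $d'$ be a code (elimination of imaginaries) for the finite set $\{d_1,\dots,d_n\}$. One then shows that $\tp_{\mathcal{L}}^{\mathfrak{C}}(d'/P)$ is stationary --- this is where the finiteness hypothesis and normality do their real work --- and that $d_1\in\dcl_{\mathcal{L}}^{\mathfrak{C}}(Q,d')$, because any $d_i$ conjugate to $d_1$ over $Qd'$ realizes a non-forking extension of the stationary $r$, forcing $p_i=p_1$ and $d_i=d_1$. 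The PAC property of $P$ is then applied not to $p$ but to $\tp_{\mathcal{L}}^{\mathfrak{C}}(d'/P)$, with a formula asserting the existence of an $x$ which is the \emph{unique} solution of $\theta(f(q_0),y,x)$ (a finite disjunction over the $P$-conjugates of the $Q$-parameter $q_0$) and which satisfies $\varphi(m_0,x)$; the witness this yields lies in $\dcl_{\mathcal{L}}^{\mathfrak{C}}(q_0,a')\subseteq Q$ with $a'\in P$ --- in $Q$ but typically not in $P$. Your finite-disjunction-over-conjugates idea does appear in that last step of the paper's proof, but only after the stationarity problem has been transferred to the code $d'$; without that transfer the argument collapses at the first invocation of PAC.
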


\begin{proof}
We want to show that if a type $p(x)$ over $Q$ is stationary, then it is finitely satisfable in $Q$ (as in Corollary \ref{PAC.def2}). 
Assume that $\varphi(m_0,x)\in p(x)$.

There are only finitely many distinct extensions of $p|_P$ over $Q'$, say $p_1,\ldots,p_n\in S(Q')$. We assume that $p_1\supseteq p$, so $p_1$ is stationary. Since $P\subseteq Q'$ is normal, for every $i\leqslant n$ the type $p_i$ is stationary. Therefore there are only finitely many distinct extensions of type $p|_P$ over $\acl_{\mathcal{L}}^{\mathfrak{C}}(P)$, abusing notation: $p_1,\ldots,p_n\in S(\acl_{\mathcal{L}}^{\mathfrak{C}}(P))$.

Consider 
$$\tilde{p}:=\bigotimes\limits_{i\leqslant n}p_i \in S(\acl_{\mathcal{L}}^{\mathfrak{C}}(P)),$$
some $d_1\ldots d_n\models\tilde{p}$ and the code $d'$ for the set $\lbrace d_1,\ldots,d_n\rbrace$ (\textit{here, we are using that there are only finitely many extensions}).
\
\\
\textbf{Claim:} It follows that $\tp_{\mathcal{L}}^{\mathfrak{C}}(d'/P)$ is stationary.
\\
\textit{Proof of the claim:} It is enough to show that there is only one extension of the type
$\tp_{\mathcal{L}}^{\mathfrak{C}}(d'/P)$ over $\acl_{\mathcal{L}}^{\mathfrak{C}}(P)$. Let $\phi(c,y)\in
\tp_{\mathcal{L}}^{\mathfrak{C}}(d'/\acl_{\mathcal{L}}^{\mathfrak{C}}(P))$, i.e.
$$\mathfrak{C}\models\phi(c,d'),$$
and let $f\in\aut_{\mathcal{L}}(\mathfrak{C}/P)$. Since $\lbrace d_1,\ldots,d_n\rbrace$ is $P$-independent, it follows that $\lbrace f(d_1),\ldots,f(d_n)\rbrace$ is also $P$-independent and so $\acl_{\mathcal{L}}^{\mathfrak{C}}(P)$-independent. Note that there exists some permutation $\sigma\in S_n$ such that $f(d_{\sigma(i)})\models p_i$, hence 
$$f(d_{\sigma(1)})\ldots f(d_{\sigma(n)})\models\tilde{p}.$$
There exists $h\in\aut_{\mathcal{L}}(\mathfrak{C}/\acl_{\mathcal{L}}^{\mathfrak{C}}(P))$ such that
$$f(d_{\sigma(1)})\ldots f(d_{\sigma(n)})=h(d_1)\ldots h(d_n).$$
Therefore for each $i\leqslant n$ it follows that $h^{-1}f(d_{\sigma(i)})=d_i$, so $h^{-1}f(d')=d'$.
We have
$$\mathfrak{C}\models \phi(h^{-1}f(c),h^{-1}f(d')),$$
$$\mathfrak{C}\models \phi(h^{-1}f(c),d'),$$
but, since $c\in\acl_{\mathcal{L}}^{\mathfrak{C}}(P)$, we have $h^{-1}f(c)=f(c)$ and the previous line can be written as
$$\mathfrak{C}\models \phi(f(c),d').$$
Because $f\in\aut_{\mathcal{L}}(\mathfrak{C}/P)$ was arbitrary, we have shown the claim.

Note that $d_1\in\dcl_{\mathcal{L}}^{\mathfrak{C}}(Q,d')$. To see this, take any $f\in\aut_{\mathcal{L}}(\mathfrak{C}/Qd')$. Since $f(d')=d'$, it follows that $f(d_1)=d_i$ for some $i\leqslant n$. We have $\tp_{\mathcal{L}}^{\mathfrak{C}}(d_i/Q)=\tp_{\mathcal{L}}^{\mathfrak{C}}(d_1/Q)=p$, which is stationary. Therefore $p_i$ and $p_1$ are non-forking extensions of a stationary type and so $p_i=p_1$ and $d_i=d_1$.

There exists a $\mathcal{L}$-formula $\theta$ such that $\theta(q_0,d',\mathfrak{C})=\lbrace d_1\rbrace$. We have
$$(\exists x)\Big(\bigvee\limits_{f\in\aut_{\mathcal{L}}(\mathfrak{C}/P)} \;\theta\big(f(q_0),y,x\big)\;\wedge\;\exists !x'\,\theta\big(f(q_0),y,x'\big)\;\wedge\;\varphi(m_0,x)\Big)\in\tp_{\mathcal{L}}^{\mathfrak{C}}(d'/P).$$
Since $P$ is PAC, and $\tp_{\mathcal{L}}^{\mathfrak{C}}(d'/P)$ is stationary, there exists $a'\in P$, $b\in\mathfrak{C}$ and $f\in\aut_{\mathcal{L}}(\mathfrak{C}/P)$ such that
$$\mathfrak{C}\models \theta\big(f(q_0),a',b\big)\;\wedge\;\exists !x'\,\theta\big(f(q_0),a',x'\big)\;\wedge\;\varphi(m_0,b),$$
$$\mathfrak{C}\models \theta\big(q_0,a',f^{-1}(b)\big)\;\wedge\;\exists !x'\,\theta(q_0,a',x')\;\wedge\;\varphi\big(m_0,f^{-1}(b)\big).$$
It follows that $f^{-1}(b)\in \dcl_{\mathcal{L}}^{\mathfrak{C}}(q_0,a')\subseteq Q$ and $\mathfrak{C}\models \varphi\big(m_0,f^{-1}(b)\big)$, what ends the proof.
\end{proof}

\begin{remark}\label{counterexamples.alg.PAC}
One could ask about possible generalizations of the above lemma. 
Section 5. in \cite{PilPol} provides an example of a superstable theory $T$ and a bounded PAC substructure $P$ of $\mathfrak{C}$ such that $\acl_{\mathcal{L}}^{\mathfrak{C}}(P)$ is not an elementary substructure (recall that an algebraically closed PAC substructure is an elementary substructure).
Therefore, it looks that there is no natural generalization of Lemma \ref{alg.ext.PAC}.
\end{remark}

\begin{cor}\label{cor.acl.PAC}
If small $P\subseteq\mathfrak{C}$ is PAC and there are only finitely many extensions over $\acl_{\mathcal{L}}^{\mathfrak{C}}(P)$ of every type over $P$, then $\acl_{\mathcal{L}}^{\mathfrak{C}}(P)$ is PAC and $\acl_{\mathcal{L}}^{\mathfrak{C}}(P)\preceq\mathfrak{C}$.
\end{cor}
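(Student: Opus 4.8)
The plan is to derive Corollary \ref{cor.acl.PAC} as a direct application of the two main tools already in place: Lemma \ref{alg.ext.PAC} and the Fact quoted from \cite{PilPol} (``an algebraically closed PAC substructure is an elementary substructure''). The statement is almost immediate once one sets up the right instance of the lemma, so the proof should be short.

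First I would apply Lemma \ref{alg.ext.PAC} with the choice $Q=Q'=\acl_{\mathcal{L}}^{\mathfrak{C}}(P)$. To do this I must verify that this choice satisfies the hypotheses of the lemma. The chain $P\subseteq Q=\dcl_{\mathcal{L}}^{\mathfrak{C}}(Q)\subseteq Q'\subseteq\acl_{\mathcal{L}}^{\mathfrak{C}}(P)$ is satisfied trivially, since $\acl_{\mathcal{L}}^{\mathfrak{C}}(P)$ is definably closed (it contains $\dcl_{\mathcal{L}}^{\mathfrak{C}}$ of itself) and equals $Q'$. The normality of $P\subseteq Q'=\acl_{\mathcal{L}}^{\mathfrak{C}}(P)$ is exactly the example flagged in the parenthetical of Lemma \ref{alg.ext.PAC}, namely that the full algebraic closure is automatically normal over $P$. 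The finiteness-of-extensions hypothesis is precisely the assumption of the corollary. Hence Lemma \ref{alg.ext.PAC} yields that $Q=\acl_{\mathcal{L}}^{\mathfrak{C}}(P)$ is PAC, which is the first assertion.

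For the second assertion I would invoke the Fact from \cite{PilPol} (the one proved just after Corollary \ref{PAC.def2}): if a small PAC substructure $R$ satisfies $\acl_{\mathcal{L}}^{\mathfrak{C}}(R)=R$, then $R\preceq\mathfrak{C}$. Setting $R:=\acl_{\mathcal{L}}^{\mathfrak{C}}(P)$, we have just shown $R$ is PAC, and moreover $\acl_{\mathcal{L}}^{\mathfrak{C}}(R)=\acl_{\mathcal{L}}^{\mathfrak{C}}(\acl_{\mathcal{L}}^{\mathfrak{C}}(P))=\acl_{\mathcal{L}}^{\mathfrak{C}}(P)=R$ by idempotence of algebraic closure, so the Fact applies and gives $\acl_{\mathcal{L}}^{\mathfrak{C}}(P)\preceq\mathfrak{C}$.

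There is essentially no obstacle here; the only point requiring a moment's care is confirming that $\acl_{\mathcal{L}}^{\mathfrak{C}}(P)$ genuinely meets all the structural hypotheses of Lemma \ref{alg.ext.PAC} (definable closedness, normality, and the chain condition), all of which are either standard facts about $\acl$ or explicitly anticipated in the statement of that lemma. Thus the corollary is a clean specialization of Lemma \ref{alg.ext.PAC} followed by the cited elementarity fact.
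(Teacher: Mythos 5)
Your proposal is correct and matches the paper's intended argument exactly: the corollary is stated without proof precisely because it is the specialization $Q=Q'=\acl_{\mathcal{L}}^{\mathfrak{C}}(P)$ of Lemma \ref{alg.ext.PAC} (whose parenthetical even flags this choice of $Q'$), followed by the quoted Fact from \cite{PilPol} applied to the algebraically closed PAC structure $\acl_{\mathcal{L}}^{\mathfrak{C}}(P)$. Your verification of the hypotheses (definable closedness, normality, idempotence of $\acl$) is exactly the routine checking the paper leaves to the reader.
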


\begin{question}\label{que1}
Assume that $P$ is PAC.
\begin{enumerate}
\item What are the obstacles to show that $\acl_{\mathcal{L}}^{\mathfrak{C}}(P)$ is PAC?

\item Assume moreover that $\acl_{\mathcal{L}}^{\mathfrak{C}}(P)$ is PAC.
Does every type over $P$ have only finitely many extensions over $\acl_{\mathcal{L}}^{\mathfrak{C}}(P)$? If not, then investigate a counterexample.
\end{enumerate}
\end{question}

The following proposition generalizes Theorem 23.1.1. in \cite{FrJa}.

\begin{theorem}\label{fr.ja.2311}
Assume that $N_0\subseteq N$ is a Galois extension of small substructures of $\mathfrak{C}$ and assume that there is an epimorhism of profinite groups
$\alpha:G\to\aut_{\mathcal{L}}(N/N_0)$, and $G$ is projective.
there exists a definably closed substructure $P\supseteq N_0$ of $\mathfrak{C}$ 
and an isomorphism of profinite groups $\gamma:G\to\mathcal{G}(P)$ such that
$$\xymatrix{G \ar[r]^-{\gamma} \ar[dr]_-{\alpha} & \mathcal{G}(P) \ar[d]^{|_N} \\
& \aut_{\mathcal{L}}(N/N_0)
}$$
is commuting.
Moreover, if any type over $A$ has only finitely many extensions over $\acl_{\mathcal{L}}^{\mathfrak{C}}(A)$, then $P$ is PAC.
\end{theorem}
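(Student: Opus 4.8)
The plan is to reduce to the realization already available in Proposition \ref{fr.ja.133}, enlarge to a PAC base using Fact \ref{regular.remark}.(5), and then use projectivity of $G$ to \emph{split off} a copy of $G$ inside a larger absolute Galois group; the Galois correspondence then produces $P$.

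First I would apply Proposition \ref{fr.ja.133} to the given epimorphism $\alpha\colon G\to\aut_{\mathcal{L}}(N/N_0)$ to obtain a Galois extension $M_0\subseteq M$ with $N_0\subseteq M_0$ and $N\subseteq M$ regular, together with an isomorphism $\beta\colon G\to\aut_{\mathcal{L}}(M/M_0)$ satisfying $\beta(g)|_N=\alpha(g)$. Next, using Fact \ref{regular.remark}.(5), I would fix a PAC substructure $M_0^{\ast}\supseteq M_0$ with $M_0\subseteq M_0^{\ast}$ regular. Now consider the restriction map $r\colon\mathcal{G}(M_0^{\ast})\to\aut_{\mathcal{L}}(M/M_0)$ (well defined because $M\subseteq\acl_{\mathcal{L}}^{\mathfrak{C}}(M_0)\subseteq\acl_{\mathcal{L}}^{\mathfrak{C}}(M_0^{\ast})$, $M$ is normal over $M_0\subseteq M_0^{\ast}$, and each element of $\mathcal{G}(M_0^{\ast})$ fixes $M_0$). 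This $r$ is a continuous epimorphism: the map $\mathcal{G}(M_0^{\ast})\to\mathcal{G}(M_0)$ is onto by Lemma \ref{regular.Galois} (regularity of $M_0\subseteq M_0^{\ast}$), while $\mathcal{G}(M_0)\to\aut_{\mathcal{L}}(M/M_0)$ is onto by Fact \ref{fact.exact}. Set $\pi:=\beta^{-1}\circ r\colon\mathcal{G}(M_0^{\ast})\to G$, again a continuous epimorphism.

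Here projectivity enters. Viewing $(\id_G\colon G\to G,\ \pi\colon\mathcal{G}(M_0^{\ast})\to G)$ as an embedding problem for $G$, projectivity yields a continuous section $s\colon G\to\mathcal{G}(M_0^{\ast})$ with $\pi\circ s=\id_G$; since $\pi s=\id_G$, the map $s$ is an embedding with closed image $H:=s(G)$. I would then put $P:=\acl_{\mathcal{L}}^{\mathfrak{C}}(M_0^{\ast})^{H}$. By the Galois correspondence (Fact \ref{galois.correspondence}) $P$ is definably closed with $M_0^{\ast}\subseteq P\subseteq\acl_{\mathcal{L}}^{\mathfrak{C}}(M_0^{\ast})$, whence $\acl_{\mathcal{L}}^{\mathfrak{C}}(P)=\acl_{\mathcal{L}}^{\mathfrak{C}}(M_0^{\ast})$ and $\mathcal{G}(P)=\aut_{\mathcal{L}}(\acl_{\mathcal{L}}^{\mathfrak{C}}(M_0^{\ast})/P)=H$. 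Taking $\gamma:=s\colon G\to\mathcal{G}(P)$ gives the desired isomorphism. For commutativity: since $N_0\subseteq P$ and $N$ is normal over $N_0$, $N$ is normal over $P$, so $|_N\colon\mathcal{G}(P)\to\aut_{\mathcal{L}}(N/N_0)$ is defined; and for $g\in G$ one has $s(g)|_M=r(s(g))=\beta(g)$, hence $s(g)|_N=\beta(g)|_N=\alpha(g)$, i.e. $|_N\circ\gamma=\alpha$. This settles the base statement, with $P$ definably closed and $N_0\subseteq P$.

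For the ``moreover'' part I would invoke Lemma \ref{alg.ext.PAC} with the PAC substructure $M_0^{\ast}$, the definably closed $Q:=P$, and $Q':=\acl_{\mathcal{L}}^{\mathfrak{C}}(M_0^{\ast})$ (normal over $M_0^{\ast}$): under the hypothesis that every type over $M_0^{\ast}$ has only finitely many extensions over $\acl_{\mathcal{L}}^{\mathfrak{C}}(M_0^{\ast})$, Lemma \ref{alg.ext.PAC} gives that $P$ is PAC. I expect the conceptual crux to be the recognition that projectivity is precisely what supplies the splitting section $s$; the main technical obstacle is the bookkeeping of the restriction maps—verifying surjectivity and continuity of $\pi$ so that projectivity applies, and checking that the finiteness hypothesis is exactly what is needed to invoke Lemma \ref{alg.ext.PAC} for the algebraic extension $M_0^{\ast}\subseteq P$.
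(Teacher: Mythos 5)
Your proposal is correct and follows essentially the same route as the paper's proof: Proposition \ref{fr.ja.133}, a regular PAC extension $M_0\subseteq M_0^{\ast}$, a projectivity-supplied continuous section of the restriction epimorphism out of $\mathcal{G}(M_0^{\ast})$, the Galois correspondence (Fact \ref{galois.correspondence}) to define $P$ as the fixed points of the section's image, and an algebraic-extension-of-PAC argument for the ``moreover'' part. The only differences are bookkeeping: the paper first shows $|_M\colon\aut_{\mathcal{L}}(\dcl_{\mathcal{L}}^{\mathfrak{C}}(M,M_0')/M_0')\to\aut_{\mathcal{L}}(M/M_0)$ is an isomorphism and splits $\mathcal{G}(M_0')\to\aut_{\mathcal{L}}(\dcl_{\mathcal{L}}^{\mathfrak{C}}(M,M_0')/M_0')$, whereas you split the composite onto $\aut_{\mathcal{L}}(M/M_0)$ directly (with surjectivity via Lemma \ref{regular.Galois} and Fact \ref{fact.exact}); and your appeal to Lemma \ref{alg.ext.PAC} with $Q:=P$, $Q':=\acl_{\mathcal{L}}^{\mathfrak{C}}(M_0^{\ast})$ is precisely the statement the ``moreover'' part needs (the paper cites Corollary \ref{cor.acl.PAC}, which only yields that $\acl_{\mathcal{L}}^{\mathfrak{C}}(M_0')$ is PAC, so your citation is the sharper one).
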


\begin{proof}
By Proposition \ref{fr.ja.133}, there exist regular extensions $N_0\subseteq M_0$ and $N\subseteq M$ such that $M_0\subseteq M$ is Galois and $\beta:G\cong\aut_{\mathcal{L}}(M/M_0)$ such that
$$\xymatrix{\aut_{\mathcal{L}}(M/M_0) \ar[dr]^-{|_N} & \\
G \ar[u]^-{\beta} \ar[r]_-{\alpha} & \aut_{\mathcal{L}}(N/N_0)
}$$
is commuting.
By Proposition 3.6 in \cite{Hoff3}, there is a PAC substructure $M_0'$ such that $M_0\subseteq M_0'$ is regular. Because $M_0\subseteq M_0'$ is regular and $\acl_{\mathcal{L}}^{\mathfrak{C}}(M_0)=\acl_{\mathcal{L}}^{\mathfrak{C}}(M)$, it follows that $M_0'\cap M=M_0$. 
We obtain that the restriction map
$$\aut_{\mathcal{L}}(\dcl_{\mathcal{L}}^{\mathfrak{C}}(M,M_0')/M_0')\xrightarrow{|_M}\aut_{\mathcal{L}}(M/M_0)$$
is an isomorphism. By $w:\aut_{\mathcal{L}}(M/M_0)\to\aut_{\mathcal{L}}(\dcl_{\mathcal{L}}^{\mathfrak{C}}(M,M_0')/M_0') $ we denote  the inverse of the restriction map $|_M$.
Hence $\aut_{\mathcal{L}}(\dcl_{\mathcal{L}}^{\mathfrak{C}}(M,M_0')/M_0')\cong G$ is projective, and so the restriction map
$$\mathcal{G}(M_0')\xrightarrow{|_{\dcl_{\mathfrak{C}}(M,M_0')}}\aut_{\mathcal{L}}(\dcl_{\mathcal{L}}^{\mathfrak{C}}(M,M_0')/M_0')$$
has a section $i:\aut_{\mathcal{L}}(\dcl_{\mathcal{L}}^{\mathfrak{C}}(M,M_0')/M_0')\to\mathcal{G}(M_0')$,
$$\xymatrixcolsep{4pc}\xymatrix{\mathcal{G}(M_0') \ar@/^2pc/[r]^{|_{\dcl_{\mathfrak{C}}(M,M_0')}}  & \aut_{\mathcal{L}}(\dcl_{\mathcal{L}}^{\mathfrak{C}}(M,M_0')/M_0') \ar[d]^{|_M} \ar[l]^{i} & \\
& \aut_{\mathcal{L}}(M/M_0) \ar[dr]^-{|_N} \ar@/^1pc/[u]^-{w}& \\
& G \ar[u]^-{\beta} \ar[r]_-{\alpha} & \aut_{\mathcal{L}}(N/N_0)
}$$
We set $\gamma:=i\circ w\circ\beta$ (note that $\gamma:G\to\mathcal{G}(M_0')$ is a continuous embedding),
$$P:=\acl_{\mathcal{L}}^{\mathfrak{C}}(M_0')^{\gamma(G)}$$
and note that $\mathcal{G}(P)=\gamma(G)$ and
$$\xymatrix{G \ar[r]^-{\gamma} \ar[dr]_-{\alpha} & \mathcal{G}(P) \ar[d]^-{|_N} \\
& \aut_{\mathcal{L}}(N/N_0)
}$$
is commuting.
The ``moreover" part follows from Corollary \ref{cor.acl.PAC}.
\end{proof}

\begin{cor}\label{pro.pro.embed}
Assume that $T$ is $\omega$-stable. Then property PP$_T$ holds, i.e.
a profinite group $G$ is projective if and only if $G$ is isomorphic to the absolute Galois group of some PAC substructure of $\mathfrak{C}$.
\end{cor}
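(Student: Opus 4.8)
The plan is to obtain the statement as a direct consequence of the two main results already established, with the hypothesis of $\omega$-stability entering only to guarantee the finiteness condition on extensions of types. The right-to-left implication needs no further work: if $G \cong \mathcal{G}(P)$ for a small PAC substructure $P$, then Theorem \ref{PAC.proj} asserts precisely that $\mathcal{G}(P)$, and therefore $G$, is projective. This half in fact holds for every stable $T$.

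For the converse, I would first isolate the only model-theoretic input beyond stability: in an $\omega$-stable theory every complete type over a small $A$ has only finitely many extensions over $\acl_{\mathcal{L}}^{\mathfrak{C}}(A)$. This is the classical finite-multiplicity phenomenon --- each such extension is a nonforking extension of the given type (the algebraic closure adds no forking), and in a totally transcendental theory the number of nonforking extensions is finite, being bounded by the Morley degree. Consequently the standing hypothesis of Corollary \ref{cor.acl.PAC} and of the ``moreover'' clause of Theorem \ref{fr.ja.2311} is satisfied unconditionally.

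Granting this, let $G$ be projective profinite. I would apply Theorem \ref{fr.ja.2311} to the trivial Galois extension $N = N_0$, where $N_0$ is any small definably closed substructure of $\mathfrak{C}$; here $\aut_{\mathcal{L}}(N/N_0)$ is trivial and $\alpha \colon G \to \aut_{\mathcal{L}}(N/N_0)$ is the (trivially surjective) epimorphism, so the commuting diagram of Theorem \ref{fr.ja.2311} is vacuous. Since $G$ is projective and the finiteness hypothesis holds, the ``moreover'' part of the theorem yields a definably closed PAC substructure $P \supseteq N_0$ together with an isomorphism $\gamma \colon G \to \mathcal{G}(P)$, which is exactly what is required. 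Thus property PP$_T$ holds under $\omega$-stability.

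The assembly is routine; the point deserving attention --- and the reason the corollary is stated for $\omega$-stable rather than arbitrary stable $T$ --- is the passage through the finiteness of type-extensions over the algebraic closure. It is this property, and not projectivity as such, that can genuinely fail in the general stable setting (cf. Remark \ref{counterexamples.alg.PAC}), and it is precisely what obstructs property PP$_T$ in full generality.
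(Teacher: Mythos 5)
Your proposal is correct and follows essentially the same route as the paper: the right-to-left direction is exactly Theorem \ref{PAC.proj}, and the left-to-right direction is the ``moreover'' clause of Theorem \ref{fr.ja.2311} (applied to a trivial Galois extension $N=N_0$), with $\omega$-stability entering only through the finite-multiplicity fact that every type over a small $A$ has finitely many extensions over $\acl_{\mathcal{L}}^{\mathfrak{C}}(A)$ --- precisely the observation the paper makes when introducing the hypothesis of Lemma \ref{alg.ext.PAC}. Your justification of that fact (all such extensions are nonforking and their number is bounded by the Morley degree) is also the standard and correct one.
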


\begin{question}\label{que2}
What is the biggest class of stable theories $T$ for which property PP$_T$ holds?
\end{question}

\begin{cor}\label{cor.counter.counter.example}
(For any stable theory $T$ with quantifier elimination and elimination of imaginaries.)
There exists a finite subset $A$ of $\mathfrak{C}$ such that
$$\acl_{\mathcal{L}}^{\mathfrak{C}}(A)\neq\dcl_{\mathcal{L}}^{\mathfrak{C}}\big(A,\;\acl_{\mathcal{L}}^{\mathfrak{C}}(\emptyset)\big).$$
\end{cor}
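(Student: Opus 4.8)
The plan is to argue by contradiction, exploiting the fact (Theorem \ref{fr.ja.2311}) that arbitrarily large projective profinite groups are realised as absolute Galois groups $\mathcal{G}(P)$ of definably closed substructures, whereas the hypothetical failure of the statement would force all such groups to embed into the single fixed group $\mathcal{G}(\emptyset)$.

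So suppose for contradiction that $\acl_{\mathcal{L}}^{\mathfrak{C}}(A)=\dcl_{\mathcal{L}}^{\mathfrak{C}}(A,\acl_{\mathcal{L}}^{\mathfrak{C}}(\emptyset))$ for every finite $A\subseteq\mathfrak{C}$. Since both $\acl_{\mathcal{L}}^{\mathfrak{C}}$ and $\dcl_{\mathcal{L}}^{\mathfrak{C}}$ are finitary, the same equality then holds for every small $A$. First I would fix an arbitrary small definably closed $P$ and build a restriction map $r\colon\mathcal{G}(P)\to\mathcal{G}(\emptyset)$. Each $\sigma\in\mathcal{G}(P)=\aut_{\mathcal{L}}(\acl_{\mathcal{L}}^{\mathfrak{C}}(P)/P)$ preserves $\acl_{\mathcal{L}}^{\mathfrak{C}}(\emptyset)\subseteq\acl_{\mathcal{L}}^{\mathfrak{C}}(P)$ setwise (it permutes the finite solution set of any algebraic formula over $\emptyset$, and these solutions already lie in $\acl_{\mathcal{L}}^{\mathfrak{C}}(\emptyset)$; quantifier elimination guarantees that solution sets are computed correctly inside the substructure), and it fixes $\dcl_{\mathcal{L}}^{\mathfrak{C}}(\emptyset)\subseteq P$ pointwise, so $r(\sigma):=\sigma|_{\acl_{\mathcal{L}}^{\mathfrak{C}}(\emptyset)}\in\mathcal{G}(\emptyset)$ is well defined. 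The crucial point is that $r$ is injective: if $r(\sigma)=\id$, then $\sigma$ fixes both $P$ and $\acl_{\mathcal{L}}^{\mathfrak{C}}(\emptyset)$ pointwise, hence fixes $\dcl_{\mathcal{L}}^{\mathfrak{C}}(P,\acl_{\mathcal{L}}^{\mathfrak{C}}(\emptyset))=\acl_{\mathcal{L}}^{\mathfrak{C}}(P)$ pointwise, so $\sigma=\id$. Consequently $|\mathcal{G}(P)|\leqslant|\mathcal{G}(\emptyset)|$ for every small definably closed $P$, and the right-hand side is a fixed cardinal $<\kappa$, since $\acl_{\mathcal{L}}^{\mathfrak{C}}(\emptyset)$ is small.

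Then I would contradict this uniform bound. Choose a free profinite group $G$ of sufficiently large, but still $<\kappa$, cardinality, so that $G$ is projective and $|G|>|\mathcal{G}(\emptyset)|$; such $G$ exists because free profinite groups are projective and occur in all cardinalities below the large saturation cardinal $\kappa$. Applying Theorem \ref{fr.ja.2311} to the trivial Galois extension $N_0=N=\dcl_{\mathcal{L}}^{\mathfrak{C}}(\emptyset)$ and the trivial epimorphism $\alpha\colon G\to\aut_{\mathcal{L}}(N/N_0)=\{1\}$ yields a definably closed $P$ with $\mathcal{G}(P)\cong G$. Then $|G|=|\mathcal{G}(P)|\leqslant|\mathcal{G}(\emptyset)|<|G|$, a contradiction, proving the statement.

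I expect the main obstacle to be the bookkeeping in the first step — checking that $r$ is well defined (the setwise invariance of $\acl_{\mathcal{L}}^{\mathfrak{C}}(\emptyset)$, which is where quantifier elimination is used) and genuinely injective — rather than the cardinality count, which is routine once the existence of large projective profinite groups below $\kappa$ is granted. It is worth emphasising that this argument invokes only the ``main'' part of Theorem \ref{fr.ja.2311}, not its ``moreover'' clause, so it applies to every stable $T$ with quantifier elimination and elimination of imaginaries, exactly as the statement of Corollary \ref{cor.counter.counter.example} requires.
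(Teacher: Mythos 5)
Your proposal is correct and follows essentially the same route as the paper: assume the equality for all finite (hence all small) sets, embed every $\mathcal{G}(P)$ into $\mathcal{G}\big(\dcl_{\mathcal{L}}^{\mathfrak{C}}(\emptyset)\big)$ via the restriction to $\acl_{\mathcal{L}}^{\mathfrak{C}}(\emptyset)$ (whose injectivity is exactly your step using $\acl_{\mathcal{L}}^{\mathfrak{C}}(P)=\dcl_{\mathcal{L}}^{\mathfrak{C}}(P,\acl_{\mathcal{L}}^{\mathfrak{C}}(\emptyset))$), and then contradict the resulting cardinal bound by applying Theorem \ref{fr.ja.2311} to a projective (free) profinite group of size larger than $|\mathcal{G}(\dcl_{\mathcal{L}}^{\mathfrak{C}}(\emptyset))|$ but below $\kappa$. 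The only cosmetic differences are that the paper writes the embedding as an isomorphism $\mathcal{G}(A)\cong\aut_{\mathcal{L}}\big(\acl_{\mathcal{L}}^{\mathfrak{C}}(\emptyset)/A\cap\acl_{\mathcal{L}}^{\mathfrak{C}}(\emptyset)\big)$ and fixes the size $2^{|\mathcal{G}(\dcl_{\mathcal{L}}^{\mathfrak{C}}(\emptyset))|}$, both of which match your argument in substance.
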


\begin{proof}
Reductio ad absurdum. Suppose that for every finite $A$ we have
$$\acl_{\mathcal{L}}^{\mathfrak{C}}(A)=\dcl_{\mathcal{L}}^{\mathfrak{C}}\big(A,\;\acl_{\mathcal{L}}^{\mathfrak{C}}(\emptyset)\big).$$
It follows that the above holds also for each small (finite or non-finite) set $A$. Hence for every small definably closed $A\subseteq\mathfrak{C}$, we have
\begin{IEEEeqnarray*}{rCl}
\mathcal{G}(A) &=& \aut_{\mathcal{L}}
\Big(\dcl_{\mathcal{L}}^{\mathfrak{C}}\big(A,\;\acl_{\mathcal{L}}^{\mathfrak{C}}(\emptyset)\big)/A\Big) \\
&\cong &
\aut_{\mathcal{L}}\big(\acl_{\mathcal{L}}^{\mathfrak{C}}(\emptyset)/A\cap\acl_{\mathcal{L}}^{\mathfrak{C}}(\emptyset) \big)\leqslant \mathcal{G}\big(\dcl_{\mathcal{L}}^{\mathfrak{C}}(\emptyset)\big).
\end{IEEEeqnarray*}
Note that $|\acl_{\mathcal{L}}^{\mathfrak{C}}(\emptyset)|$, and so $|\mathcal{G}(\dcl_{\mathcal{L}}^{\mathfrak{C}}(\emptyset))|$, depends only on $|T|$ and not on the saturation of the monster model (i.e. on $\kappa$).
Let $G$ be a profinite projective group of the size $2^{|\mathcal{G}(\dcl_{\mathcal{L}}^{\mathfrak{C}}(\emptyset))|}$ (such a group exists by existence of arbitrarily large profinite groups, Theorem 3.3.16 and Lemma 7.6.3 in \cite{ribzal}). By Theorem \ref{fr.ja.2311} there exists a small definably closed substructure $P$ such that
$$G\cong\mathcal{G}(P)\leqslant \mathcal{G}\big(\dcl_{\mathcal{L}}^{\mathfrak{C}}(\emptyset)\big).$$
\end{proof}

Note that equalities similar to the one from Corollary \ref{cor.counter.counter.example}, i.e. of the form
$$\acl_{\mathcal{L}}^{\mathfrak{C}}(B)=\dcl_{\mathcal{L}}^{\mathfrak{C}}(B,\acl_{\mathcal{L}}^{\mathfrak{C}}(A)),$$
where $A\subseteq B$, are desired in model theory (e.g. check the discussion before Conjecture \ref{conjecture1}, Definition 4.19 in \cite{Hoff3} , or Proposition 2.5 in \cite{PilPol}).

\section{G-actions on substructures}\label{sec.G.actions}
Assume that $G$ is \textbf{finitely generated}, $T$ is a stable $\mathcal{L}$-theory, which has quantifier elimination and elimination of imaginaries. We denote by ``$\hat{G}$" the profinite completion of group $G$.

Now, we are interested in (absolute Galois groups of) substructures of a monster model $\mathfrak{C}$ of the theory $T$, which are equipped with a group action of the group $G$. Note that an $\mathcal{L}$-structure $M$ might be seen as a substructure of $\mathfrak{C}$ if and only if $M\models\ T_{\forall}$, i.e. $T_{\forall}$ is the theory of the class of small $\mathcal{L}$-substructures of $\mathfrak{C}$. We introduce a new language
$$\mathcal{L}^G:=\mathcal{L}\;\cup\;\{\sigma_g\}_{g\in G},$$
where each $\sigma_g$ is a unary function symbol (but, for simplicity, it will denote also the interpretation of $\sigma_g$ in an $\mathcal{L}^G$-structure $(M,\bar{\sigma})$, ``$\sigma_g^M$"). A $\mathcal{L}^G$-structure $(M,\bar{\sigma})$ is a model of $(T_{\forall})_G$ if and only if
\begin{itemize}
\item $M\models T_{\forall}$,
\item for each $g\in G$, it follows that $\sigma_g\in\aut_{\mathcal{L}}(M)$,
\item $G\ni g\mapsto \sigma_g\in\aut_{\mathcal{L}}(M)$ is a homomorphism of groups.
\end{itemize}
Assume that $(M,\bar{\sigma})$ is an existentially closed model of $(T_{\forall})_G$ (i.e. existentially closed among all small substructures of $\mathfrak{C}$ equipped with an action of group $G$).

We are interested in a description of $\mathcal{G}(M)$. It turned out that it is good to start with the  description of $\mathcal{G}(M^G)$ (it is easier and might be used in the desired description of $\mathcal{G}(M)$). The idea behind next results is the following one: action of group $G$ on $M$ depends only on the action of group $G$ on $M\cap\acl_{\mathcal{L}}^{\mathfrak{C}}(M^G)$ - the relative algebraic closure of invariants.
The following proposition, which is a generalization of Theorem 4. in \cite{sjogren}, partially express this idea   by embedding the group $G$ into the group of automorphisms of the relative algebraic closure of invariants.

\begin{prop}\label{profinite.completion}
It follows that
$$\mathcal{A}:=\aut_{\mathcal{L}}(M\cap\acl_{\mathcal{L}}^{\mathfrak{C}}(M^G)/M^G)\cong \hat{G}.$$
\end{prop}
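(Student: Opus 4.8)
The plan is to realise $\mathcal{A}$ as $\aut_{\mathcal{L}}(N/N^{\hat{G}})$ for a continuous $\hat{G}$-action on $N:=M\cap\acl_{\mathcal{L}}^{\mathfrak{C}}(M^G)$ and then to invoke Lemma \ref{N_Galois}.(5). First I would record the easy structural facts: since $M$ is existentially closed it is PAC, so $\dcl_{\mathcal{L}}^{\mathfrak{C}}(M)=M$, and a short check (using that each $\sigma_g$ fixes $M^G$ and preserves $\mathcal{L}$-formulas) gives $\dcl_{\mathcal{L}}^{\mathfrak{C}}(M^G)=M^G$ and $\dcl_{\mathcal{L}}^{\mathfrak{C}}(N)=N$. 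Every $m\in N$ lies in $\acl_{\mathcal{L}}^{\mathfrak{C}}(M^G)$, hence has only finitely many $\mathcal{L}$-conjugates over $M^G$; as each $\sigma_g$ permutes these conjugates, the orbit $G\cdot m$ is finite and $\stab(m)$ has finite index. Consequently the $G$-action on $N$ extends uniquely to a continuous action of $\hat{G}$ on $N$ by $\mathcal{L}$-automorphisms fixing $M^G$, with every $\stab(m)$ open, with $N^{\hat{G}}=N^G=M^G$, and with $N\subseteq\acl_{\mathcal{L}}^{\mathfrak{C}}(M^G)=\acl_{\mathcal{L}}^{\mathfrak{C}}(N^{\hat{G}})$. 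If this $\hat{G}$-action is faithful, Lemma \ref{N_Galois}.(5) yields at once that $M^G\subseteq N$ is Galois and that the induced map $\hat{G}\to\aut_{\mathcal{L}}(N/M^G)=\mathcal{A}$ is an isomorphism of profinite groups.

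So the whole problem reduces to faithfulness, and I would reduce this in turn to a realisation statement: for every epimorphism $q\colon G\to F$ onto a finite group $F$, there is a finite Galois extension $E\subseteq N$ of $M^G$ with $\aut_{\mathcal{L}}(E/M^G)\cong F$ on which the $G$-action is exactly $q$. Indeed, given $1\neq\hat{g}\in\hat{G}$, one may pick $F$ with $\hat{g}\notin\ker(\hat{G}\to F)$; the corresponding $E$ then witnesses $\hat{g}\cdot x\neq x$ for some $x\in E\subseteq N$, so $\hat{G}$ acts faithfully on $N$.

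The hard part will be producing such an $E$ \emph{inside} $M$, and here existential closedness enters. Following the pattern of Proposition \ref{fr.ja.133}, I would take a non-algebraic stationary type over $M^G$ (Fact \ref{cor.stationary_types_exist}) and a Morley sequence $(b_h)_{h\in F}$ over $M$ in it, independent from $M$ over $M^G$; let $F$ act by $h'\cdot b_h=b_{h'h}$ and let $G$ act through $q$. Using the regularity/independence machinery (Facts \ref{lang413}, \ref{cor.413}, and especially Fact \ref{regular.PAPA}) exactly as in Proposition \ref{fr.ja.133}, I would glue this action to $\bar{\sigma}$ on $M$ into a genuine $G$-action on $M^{\dagger}:=\dcl_{\mathcal{L}}^{\mathfrak{C}}(M,(b_h)_{h\in F})$, producing an extension $(M^{\dagger},\bar{\sigma}^{\dagger})\models(T_{\forall})_G$ of $(M,\bar{\sigma})$. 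By Artin's theorem (Lemma \ref{N_Galois}.(4)) the $F$-invariants generate a regular extension $D_0$ of $M^G$ with $D_0\subseteq\dcl_{\mathcal{L}}^{\mathfrak{C}}(M^G,(b_h))$ Galois of group $F$ and $G$ acting via $q$; crucially, $D_0$ is generated over $M^G$ by $G$-invariant symmetric functions of the $b_h$. Now I would encode ``there exist $G$-invariant $\bar{e}$ and elements $\bar{c}$, algebraic over $\bar{e}$ and $M^G$, forming an $F$-Galois extension on which the $\sigma_g$ act via $q$'' by an existential $\mathcal{L}^G$-formula over $M^G$ (first-order because $F$ is finite); it holds in $M^{\dagger}$, hence by existential closedness in $M$. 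Since the witnessing $\bar{e}$ are $G$-invariant they land in $M^G$, so the transcendental base $D_0$ collapses to $M^G$ and the witnessing $\bar{c}$ generate the desired finite Galois $E\subseteq M\cap\acl_{\mathcal{L}}^{\mathfrak{C}}(M^G)=N$. The two genuinely delicate points are the coherent gluing of the two $G$-actions (the homomorphism property in $g$, handled by the regularity facts as in Proposition \ref{fr.ja.133}) and arranging the existential encoding so that invariance forces the generic base back down to $M^G$; granting these, faithfulness follows and Lemma \ref{N_Galois}.(5) finishes the proof.
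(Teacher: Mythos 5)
Your overall architecture is genuinely different from the paper's. The paper never constructs a $\hat{G}$-action on $N:=M\cap\acl_{\mathcal{L}}^{\mathfrak{C}}(M^G)$; instead it (i) quotes that $\mathcal{A}$ is finitely generated (Proposition 3.31 of \cite{Hoff3}), (ii) invokes Nikolov--Segal \cite{NikoSega} to get $\mathcal{A}=\hat{\mathcal{A}}$, and (iii) proves $\imG(\mathcal{A})=\imG(G)$, where the hard inclusion uses the same Morley-sequence gluing as yours but transfers it back to $M$ not by an existential formula, but through the pair language $\mathcal{L}^G\cup\lbrace M,M^G\rbrace$, a realization of $\qftp_{\mathcal{L}^D}^{\mathfrak{C}}(M'/M)$ in an elementary extension of the pair, and the boundedness of $M^G$ (Proposition 2.5 of \cite{PilPol}) to identify the relevant Galois groups. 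Your reduction --- extend the $G$-action to a continuous $\hat{G}$-action on $N$ with open stabilizers (legitimate, since every $G$-orbit in $N$ is finite), note $N^{\hat{G}}=N^G=M^G$, and apply Lemma \ref{N_Galois}.(5) once faithfulness is known --- is sound, and if completed it bypasses both Nikolov--Segal and the finite generation of $\mathcal{A}$.

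There is, however, a genuine gap at the decisive step, the transfer by existential closedness. The property you propose to transfer --- ``$\bar{c}$ is algebraic over $M^G\bar{e}$ and forms an $F$-Galois extension of $M^G$ on which the $\sigma_g$ act via $q$'' --- is not expressible by an existential $\mathcal{L}^G$-formula with parameters in $M$: algebraicity requires a bound on the number of solutions (a universal condition), and normality (every $M^G$-conjugate of $c_f$ is again some $c_{f'}$) quantifies over conjugates, indeed over the infinite set $M^G$, so finiteness of $F$ does not make it existential (nor even first-order over finitely many parameters). Existential closedness of $(M,\bar{\sigma})$ transfers only existential formulas; if you transfer just the existentially expressible part (the equations $\sigma_{g_i}(c_f)=c_{q(g_i)f}$ and pairwise distinctness), the witnesses in $M$ need not be algebraic over $M^G$ at all, hence need not lie in $N$, and faithfulness does not follow. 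Note that the delicate point you flag (forcing $\bar{e}$ into $M^G$) is actually immediate, since $G$-invariance inside $M$ is by definition membership in $M^G$; the problem is the one above, which your write-up does not address.

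The gap is reparable within your framework, and in a way that keeps your route simpler than the paper's. First, faithfulness does not need a Galois extension: it suffices to find $|F|$ pairwise distinct elements of $N$ permuted by $G$ via $q$. Second, algebraicity can be forced by a quantifier-free formula: choose $\theta(\bar{y},\bar{z},x)\in\tp_{\mathcal{L}}^{\mathfrak{C}}(b_1/M^G\bar{e})$ algebraic, replace it by the conjunction of $\theta$ with ``$\theta(\bar{y},\bar{z},\cdot)$ has at most $|F|$ solutions'' (which is $T$-provably algebraic in $x$ for \emph{all} parameter values), and use quantifier elimination to replace this by an equivalent quantifier-free $\mathcal{L}$-formula $\tilde{\theta}$. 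Then
$$\exists\bar{y}\,\exists(w_f)_{f\in F}\Big(\bigwedge_{i}\sigma_{g_i}(\bar{y})=\bar{y}\;\wedge\;\bigwedge_{f\neq f'}w_f\neq w_{f'}\;\wedge\;\bigwedge_{i,f}\sigma_{g_i}(w_f)=w_{q(g_i)f}\;\wedge\;\bigwedge_{f}\tilde{\theta}(\bar{y},\bar{m}_0,w_f)\Big)$$
is a genuine existential $\mathcal{L}^G$-formula over $M$, true in $(M^{\dagger},\bar{\sigma}^{\dagger})$, hence true in $(M,\bar{\sigma})$; by quantifier-free absoluteness the witnesses lie in $M\cap\acl_{\mathcal{L}}^{\mathfrak{C}}(M^G)=N$, are distinct, and are permuted via $q$, which is exactly what faithfulness requires. (If one additionally takes $\theta$ with solution set \emph{exactly} $\lbrace b_h\rbrace_{h\in F}$ and encodes that, again quantifier-freely via QE, the transferred witnesses even generate a normal, hence Galois, extension of $M^G$, recovering your full realization statement.) With this correction your proof goes through.
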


\begin{proof}
By Proposition 3.31 in \cite{Hoff3}, $\mathcal{A}$ is finitely generated. Corollary 3.2.8 in \cite{ribzal} says that two finitely generated groups have isomorphic profinite completions if and only if they have the same finite quotients:
$$\hat{\mathcal{A}}\cong\hat{G}\qquad\text{iff}\qquad\imG(\mathcal{A})=\imG(G).$$
We need to evoke a significant theorem: Theorem 1.1 in \cite{NikoSega} - profinite completion of a finitely generated profinite group is equal to this group (equivalently: its subgroups of finite index are open). Hence $\mathcal{A}=\hat{\mathcal{A}}$. Moreover, every homomorphism $\alpha:\mathcal{A}\to H$, where $H$ is a finite group with discrete topology, is continuous and therefore $\imG(\mathcal{A})\subseteq\imG(G)$.
We need to show that $\imG(\mathcal{A})\supseteq\imG(G)$.

Let $\pi: G\twoheadrightarrow H$ be a homomorphism onto a finite group $H$ and let $m:=|H|$. Our goal is to prove existence of a surjective group homomorphism $\mathcal{A}\twoheadrightarrow H$.

Take $\bar{b}=(b_1,\ldots,b_m)$, where $b_i$ are pairwise different elements of a Morley sequence in some stationary type $p(x)$ over $M^G$ (which exists by Fact \ref{cor.stationary_types_exist}). Without loss of generality, we assume that $M\ind_{M^G}^{\mathfrak{C}}\bar{b}$. We treat $H$ as a subgroup of $S_m$ and since the $\lbrace b_1,\ldots,b_m\rbrace$ is $M^G$-indiscernible, $H$ acts on $N:=\dcl_{\mathcal{L}}^{\mathfrak{C}}(M^G\bar{b})$. Moreover, $G$ acts on $N$ by $g\cdot a=\pi(g)(a)$ for $a\in N$. By Fact \ref{regular.PAPA} the action of $G$ on $M$ and the action of $G$ on $N$ extend simultaneously to an action of group $G$ on $M':=\dcl_{\mathcal{L}}^{\mathfrak{C}}(M\bar{b})$ (similarly as in the proof of Theorem \ref{PAC.proj}).
Note that, by Lemma \ref{N_Galois}, $N^H\subseteq N$ is Galois (in particular $N\subseteq\acl_{\mathcal{L}}^{\mathfrak{C}}(N^H)$). Since $\pi$ is onto, it follows that $M^G\subseteq N^H\subseteq (M')^G$, thus $(M,\bar{\sigma})\subseteq (M',\bar{\sigma}')$, where $\bar{\sigma}'$ is the above defined action of $G$ on $M'$. Existentially closedness of $(M,\bar{\sigma})$ implies that $(M,\bar{\sigma})\preceq_1(M,\bar{\sigma}')$.

Let us fix some $|M|^{+}$-strongly homogeneous $D\preceq\mathfrak{C}$ such that $M\subseteq D$ and $D\ind_M^{\mathfrak{C}}M'$.
For sure, every $\sigma_g$ extends to an element of $\aut_{\mathcal{L}}(D)$, and for simplicity we denote such an extension by the same symbol ``$\sigma_g$". Since $D\ind_M^{\mathfrak{C}}M'$ and $M\subseteq M'$ is regular (by Fact \ref{lang410}, $M\preceq_1 M'$ implies regularity), we can extended simultaneously $\sigma_g:D\to D$ and $\sigma_g':M'\to M'$ to an element of $\aut_{\mathcal{L}}(\mathfrak{C})$, which for simplicity we denote by the symbol ``$\sigma_g'$". Moreover, regularity of $M\subseteq M'$ implies that $M'\cap D=M$ and so $(M')^G\cap D=M^G$.

We introduce a new language, $\mathcal{L}^D:=\mathcal{L}^G\cup\lbrace M, M^G\rbrace$. Consider the following extension of $\mathcal{L}^D$-structures:
$$(D,\bar{\sigma}',M,M^G)\subseteq \big(\mathfrak{C},\bar{\sigma}',M',(M')^G\big).$$
Because $G$ is finitely generated, $M^G$ as a predicate is definable by a $\mathcal{L}^G\cup\lbrace M\rbrace$-formula, and we could skip explicite use $M^G$ and $(M')^G$ in the above extension and forget about symbol $M^G$ in the definition of language $\mathcal{L}^D$, but we want to keep things more transparent.
\
\\
\textbf{Claim:} The type $\qftp_{\mathcal{L}^D}^{\mathfrak{C}}(M'/M)$ is consistent with $\theo_{\mathcal{L}^D}(D)$.
\\
\textit{Proof of the claim:} The claim follows from the definability of $M^G$ and $(M')^G$ in $\mathcal{L}^G$-structures $(M,\bar{\sigma})$ and $(M',\bar{\sigma}')$, and from $(M,\bar{\sigma})\preceq_1(M',\bar{\sigma}')$.

Let $(D,\bar{\sigma},M,M^G)\preceq(D_1,\bar{\sigma}_1, M_1, M^G_1)$ be such that $D_1\preceq\mathfrak{C}$ and $(D_1,\bar{\sigma}_1, M_1, M^G_1)$ realizes $\qftp_{\mathcal{L}^D}^{\mathfrak{C}}(M'/M)$. Moreover, let $M''\subseteq D_1$ be a realization of $\qftp_{\mathcal{L}^D}^{\mathfrak{C}}(M'/M)$:
\begin{flushleft}
\hspace{10mm}
\begin{tikzpicture}
\draw  (0,0) .. controls (1,2) and (2,3) .. (4,4);
\draw  (0,0) .. controls (3,1) and (4,1) .. (4,4);
\draw  (0,0) .. controls (2,-3) and (4,-3) .. (4,4);
\draw  (0,0) .. controls (-3,4) and (4,1.5) .. (2,-1.88);
\draw[dashed]  (0,0) .. controls (1.1,-1.5) and (1.9,-1.4) .. (1.97,0.66);
\node (a) at (1,1) {$M$};
\node (a) at (-0.25,1.5) {$M'$};
\node (a) at (2,2) {$D$};
\node (a) at (3,-0.5) {$D_1$};
\node (a) at (1,-0.3) {$M''$};
\node (a) at (1.7,-1.2) {$M_1$};
\end{tikzpicture}
\end{flushleft}
There exists an $\mathcal{L}^G$-isomorphism (over $M$)
$$h:(M,\bar{\sigma})\to(M'',\bar{\sigma}_1),$$
which is a restriction of some $\hat{h}\in\aut_{\mathcal{L}}(\mathfrak{C})$, for simplicity we denote $\hat{h}$ by ``$h$". We have
$$\big(h^{-1}(D), \bar{\sigma}^{h^{-1}},M,M^G\big)\preceq \Big(h^{-1}(D_1),\bar{\sigma}_1^{h^{-1}},h^{-1}(M_1),\big(h^{-1}(M_1)\big)^G\Big),$$
$M'=h^{-1}(M'')\subseteq h^{-1}(M_1)$ and, since $h:M'\to M''$ is an $\mathcal{L}^G$-isomorphism, $\sigma_{1,g}^{h^{-1}}$ extends $\sigma_g'$ for each $g\in G$. Observe that $N\subseteq M'\subseteq h^{-1}(M_1)$ and $N^H\subseteq (M')^G\subseteq \big( h^{-1}(M_1)\big)^G$.

To this point we obtained
$$D\preceq D_2\preceq \mathfrak{C},\quad (D,M)\preceq (D_2,M_2),\quad (D,M^G)\preceq (D_2,M_2^G),$$
$$(M,\bar{\sigma})\preceq (M_2,\bar{\sigma}_2), \quad N\subseteq M_2, \quad N^H\subseteq M_2^G,$$
where $D_2:=h^{-1}(D_1)$, $M_2:=h^{-1}(M_1)$ and $\bar{\sigma}_2:=\bar{\sigma}_1^{h^{-1}}$.

By Lemma 3.55 in \cite{Hoff3}, $M^G$ is bounded. Proposition 2.5 in \cite{PilPol} (and its proof) implies that the restriction map
$$R:\mathcal{G}(M_2^G)\to
\mathcal{G}(/M^G)$$
is an isomorphism. Consider one more restriction map:
$$r:\mathcal{G}(M_2\cap \acl_{\mathcal{L}}^{\mathfrak{C}}(M_2^G))\to \mathcal{G}(M\cap \acl_{\mathcal{L}}^{\mathfrak{C}}(M^G)).$$
It is not hard to see that $M\cap\acl_{\mathcal{L}}^{\mathfrak{C}}(M^G)\subseteq M_2\cap\acl_{\mathcal{L}}^{\mathfrak{C}}(M_2^G)$ is regular. Thus, Lemma \ref{regular.Galois} implies that 
the map $r$ is onto. By Proposition 2.5 in \cite{PilPol}, it follows
$$\acl_{\mathcal{L}}^{\mathfrak{C}}(M_2^G)=\dcl_{\mathcal{L}}^{\mathfrak{C}}\big(\acl_{\mathcal{L}}^{\mathfrak{C}}(M^G), M_2^G\big)=
\dcl_{\mathcal{L}}^{\mathfrak{C}}\big(\acl_{\mathcal{L}}^{\mathfrak{C}}(M^G),
 M_2\cap\acl_{\mathcal{L}}^{\mathfrak{C}}(M_2^G)\big),$$
hence the map $r$ is injective.

We have the following diagram
$$\xymatrixcolsep{4pc}\xymatrix{
\mathcal{G}(M_2\cap \acl_{\mathcal{L}}^{\mathfrak{C}}(M_2^G)) \ar[r]^{r} \ar@{^{(}->}[d]_{\subseteq}&
\mathcal{G}(M\cap \acl_{\mathcal{L}}^{\mathfrak{C}}(M^G)) \ar@{^{(}->}[d]_{\subseteq}  \\
\mathcal{G}(M_2^G) \ar[r]^{R} \ar@{>>}[d]_{|}&
\mathcal{G}(M^G)  \ar@{>>}[d]_{|}\\
\aut_{\mathcal{L}}(M_2\cap \acl_{\mathcal{L}}^{\mathfrak{C}}(M_2^G)/M_2^G) \ar@{-->}[r]^{\exists ! R'} &
\aut_{\mathcal{L}}(M\cap \acl_{\mathcal{L}}^{\mathfrak{C}}(M^G)/M^G)
}$$
and $R'$ is an isomorphism. 
Therefore we will show that $H\in\imG(\mathcal{A})$ if we only can show that
$$H\in\imG\Big( \aut_{\mathcal{L}}(M_2\cap \acl_{\mathcal{L}}^{\mathfrak{C}}(M_2^G)/M_2^G) \Big).$$

Since $N\subseteq \acl_{\mathcal{L}}^{\mathfrak{C}}(N^H)\subseteq\acl_{\mathcal{L}}^{\mathfrak{C}}(M_2^G)$, $N\subseteq M_2$ and $N^H\subseteq M_2^G$ we have the following restriction map
$$\pi':\aut_{\mathcal{L}}(M_2\cap \acl_{\mathcal{L}}^{\mathfrak{C}}(M_2^G)/M_2^G) \to \aut_{\mathcal{L}}(N/N^H)= H.$$
We are done if we prove that $\pi'$ is onto.
Assume that $h\in\aut_{\mathcal{L}}(N/N^H)$. We have $h=\pi(g)$ for some $g\in G$. Thus
$$h=\pi(g)=\sigma_g'|_N=\sigma_{2,g}|_N=\big(\sigma_{2,g}|_{M_2\cap \acl_{\mathcal{L}}^{\mathfrak{C}}(M_2^G)}\big)|_N=\pi'\big(\sigma_{2,g}|_{M_2\cap \acl_{\mathcal{L}}^{\mathfrak{C}}(M_2^G)}\big).$$
\end{proof}

The following corollary is related to Corollary \ref{pro.embed} (since $G$ is isomorphic to a Galois group of some Galois extension) and to Corollary \ref{pro.pro.embed} (since $P$ in Corollary \ref{pro.embed7} $P$ is PAC). The devil's in the detail: in Corollary \ref{pro.pro.embed} we assume that $T$ is $\omega$-stable, but get the absolute Galois group of PAC structure, in Corollary \ref{pro.embed7} we assume that $G$ is finitely generated, but get only the Galois group of some Galois extension (similarly as in Corollary \ref{pro.embed}) of a PAC structure.

\begin{cor}\label{pro.embed7}
For every finitely generated profinite group $G$, there exist
a bounded PAC substructure $P$ of $\mathfrak{C}$
and a Galois extension $P\subseteq N$, such that
$$\aut_{\mathcal{L}}(N/P)\cong G.$$
\end{cor}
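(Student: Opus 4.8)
The plan is to combine the machinery already developed for finitely generated $G$ with the $\omega$-stable-style arguments, but to get around the fact that $T$ need not be $\omega$-stable by using the $G$-action apparatus of Section \ref{sec.G.actions} rather than invoking Corollary \ref{pro.pro.embed} directly. Concretely, I would start from an existentially closed model $(M,\bar\sigma)$ of $(T_\forall)_G$, which exists because the class of small $\mathcal{L}$-substructures of $\mathfrak{C}$ equipped with a $G$-action has existentially closed members (one takes any such structure with a faithful $G$-action and passes to an e.c.\ extension). By the first entry of the table in the introduction (proven in \cite{Hoff3}), the invariant substructure $P:=M^G$ is then a PAC substructure of $\mathfrak{C}$, and by Lemma 3.55 in \cite{Hoff3} it is bounded. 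This already supplies the bounded PAC substructure $P$ demanded by the statement.

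Next I would produce the Galois extension. The natural candidate is $N:=M\cap\acl_{\mathcal{L}}^{\mathfrak{C}}(M^G)$, the relative algebraic closure of the invariants inside $M$. By Proposition \ref{profinite.completion} we have
$$\aut_{\mathcal{L}}(N/P)=\aut_{\mathcal{L}}(M\cap\acl_{\mathcal{L}}^{\mathfrak{C}}(M^G)/M^G)\cong\hat{G},$$
so if $G$ is already profinite and finitely generated, then by Theorem 1.1 in \cite{NikoSega} its subgroups of finite index are open, whence $\hat{G}\cong G$ and we obtain $\aut_{\mathcal{L}}(N/P)\cong G$ as required. To see that $P\subseteq N$ is genuinely a Galois extension, I would verify the hypotheses of Lemma \ref{N_Galois}: $N$ is definably closed, $N\subseteq\acl_{\mathcal{L}}^{\mathfrak{C}}(P)$ by construction, and the $G$-action on $N$ is by $\mathcal{L}$-automorphisms fixing $P=M^G$ pointwise; since each orbit is contained in the relative algebraic closure it is finite, hence the stabilisers are open, and item (5) of Lemma \ref{N_Galois} gives that $P\subseteq N$ is Galois with $\aut_{\mathcal{L}}(N/P)\cong G$.

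The step I expect to require the most care is confirming that $N=M\cap\acl_{\mathcal{L}}^{\mathfrak{C}}(M^G)$ is normal and definably closed over $P$, and that the isomorphism produced by Proposition \ref{profinite.completion} is compatible with the Galois structure (rather than merely an abstract isomorphism of profinite completions). The content of Proposition \ref{profinite.completion} is exactly that $\mathcal{A}=\aut_{\mathcal{L}}(N/P)$ is finitely generated and has the same finite quotients as $G$, so the main subtlety is to argue that $\mathcal{A}$ is itself profinite and that the abstract isomorphism $\mathcal{A}\cong\hat G\cong G$ can be realised as $\aut_{\mathcal{L}}(N/P)\cong G$; this is where the Nikolov--Segal rigidity theorem (Theorem 1.1 in \cite{NikoSega}) does the real work, since it lets us identify $\mathcal{A}=\hat{\mathcal{A}}$ and likewise $G=\hat G$. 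I would then close the argument by checking $P=\dcl_{\mathcal{L}}^{\mathfrak{C}}(P)$, which follows from $P$ being PAC under quantifier elimination, so that Definition \ref{galois.ext.def} applies and $P\subseteq N$ is a Galois extension with the prescribed Galois group.
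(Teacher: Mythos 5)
Your main line is exactly the paper's proof: take an existentially closed $(M,\bar\sigma)\models(T_\forall)_G$, set $P:=M^G$ (bounded PAC by the results of \cite{Hoff3} recalled in Section \ref{sec.G.actions}), $N:=M\cap\acl_{\mathcal{L}}^{\mathfrak{C}}(M^G)$, and conclude $\aut_{\mathcal{L}}(N/P)\cong\hat{G}\cong G$ from Proposition \ref{profinite.completion} together with Theorem 1.1 of \cite{NikoSega}. The paper's proof is literally ``By Proposition \ref{profinite.completion} and Theorem 1.1 in \cite{NikoSega}'', so on the group-theoretic identification you and the paper agree.

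The gap is in your verification that $P\subseteq N$ is Galois. You invoke Lemma \ref{N_Galois}(5), but that item has a faithfulness hypothesis which you never check: you verify definable closedness, $N\subseteq\acl_{\mathcal{L}}^{\mathfrak{C}}(P)$, and open stabilisers, but not that $G$ acts faithfully on $N$. This is not a formality. Even when the action on all of $M$ is faithful, the kernel of its restriction to the relative algebraic closure $M\cap\acl_{\mathcal{L}}^{\mathfrak{C}}(M^G)$ could a priori be nontrivial; ruling this out is essentially the nontrivial, existential-closedness-using direction of Proposition \ref{profinite.completion}, and you cannot read it off from that proposition either, since (being proved via Corollary 3.2.8 of \cite{ribzal}) it produces only an abstract isomorphism $\mathcal{A}\cong\hat{G}$, not injectivity of the natural map $G\to\mathcal{A}$. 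Note also that if faithfulness were available, item (5) alone would already yield $\aut_{\mathcal{L}}(N/P)\cong G$, making your appeal to Proposition \ref{profinite.completion} redundant --- a sign that this step is carrying more weight than you intend. The repair is cheap, because Galois-ness does not need faithfulness: each $\sigma_g$ extends to an element of $\aut_{\mathcal{L}}(\mathfrak{C}/M^G)$, so every orbit $G\cdot b$ with $b\in N$ is finite, hence definable; item (1) of Lemma \ref{N_Galois} then gives that $P\subseteq N$ is normal, and item (2) gives that it is Galois, after which Proposition \ref{profinite.completion} and Nikolov--Segal finish the proof as in the paper. Finally, your inference ``orbits are finite, hence stabilisers are open'' itself silently uses Nikolov--Segal (finite index does not imply open in a general profinite group), so it is legitimate only because $G$ is finitely generated; this should be said explicitly.
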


\begin{proof}
By Proposition \ref{profinite.completion} and Theorem 1.1 in \cite{NikoSega}.
\end{proof}

Now, we will state a relation between some Galois groups of $(M,\bar{\sigma})$ and group $G$, which was partially established in Corollary 3.47 in \cite{Hoff3}. To do this, definition of a \emph{Frattini cover} is needed.

\begin{definition}\label{frattini0}
Let $H,H'$ be profinite groups and $\pi: H\to H'$ be a continuous epimorphism.
The mapping $\pi$ is called a \emph{Frattini cover} if for each closed subgroup $H_0$ of $H$, the condition $\pi(H_0)=H'$ implies that $H_0=H$.
\end{definition}

\begin{cor}
The restriction map
$$\Xi:\mathcal{G}(M^G)\to \aut_{\mathcal{L}}(M\cap\acl_{\mathcal{L}}^{\mathfrak{C}}(M^G)/M^G)$$
is a Frattini cover.
\end{cor}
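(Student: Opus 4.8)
The plan is to verify Definition \ref{frattini0} directly. First I would record that $\Xi$ is a well-defined continuous epimorphism: since $M^G\subseteq M\cap\acl_{\mathcal{L}}^{\mathfrak{C}}(M^G)\subseteq\acl_{\mathcal{L}}^{\mathfrak{C}}(M^G)$ is a tower of Galois extensions (the middle term being the relative algebraic closure of the invariants, normal over $M^G$ as in the setting of Proposition \ref{profinite.completion}), Fact \ref{fact.exact} shows that the restriction map $\Xi$ is onto, with closed normal kernel $\aut_{\mathcal{L}}(\acl_{\mathcal{L}}^{\mathfrak{C}}(M^G)/(M\cap\acl_{\mathcal{L}}^{\mathfrak{C}}(M^G)))$.

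So let $H_0\leqslant\mathcal{G}(M^G)$ be a closed subgroup with $\Xi(H_0)=\mathcal{A}$; I must show $H_0=\mathcal{G}(M^G)$. By the Galois correspondence (Fact \ref{galois.correspondence}) it suffices to prove that the fixed structure $F:=\acl_{\mathcal{L}}^{\mathfrak{C}}(M^G)^{H_0}$ equals $M^G$. The heart of the argument is to manufacture, from $H_0$, an extension of the $G$-structure $(M,\bar{\sigma})$ inside which $F$ becomes a set of $G$-invariants. For each $g\in G$ the automorphism $\sigma_g$ fixes $M^G$ pointwise and preserves both $M$ and $\acl_{\mathcal{L}}^{\mathfrak{C}}(M^G)$, so $\sigma_g|_{M\cap\acl_{\mathcal{L}}^{\mathfrak{C}}(M^G)}\in\mathcal{A}$. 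Because $\Xi(H_0)=\mathcal{A}$, I can choose $\tau_g\in H_0=\aut_{\mathcal{L}}(\acl_{\mathcal{L}}^{\mathfrak{C}}(M^G)/F)$ with $\tau_g|_{M\cap\acl_{\mathcal{L}}^{\mathfrak{C}}(M^G)}=\sigma_g|_{M\cap\acl_{\mathcal{L}}^{\mathfrak{C}}(M^G)}$; note that $\tau_g$ fixes $F$ pointwise. Now $M$ is regular over $M\cap\acl_{\mathcal{L}}^{\mathfrak{C}}(M^G)$, and since $\acl_{\mathcal{L}}^{\mathfrak{C}}(M\cap\acl_{\mathcal{L}}^{\mathfrak{C}}(M^G))=\acl_{\mathcal{L}}^{\mathfrak{C}}(M^G)$, Fact \ref{regular} produces $\sigma_g^{\ast}\in\aut_{\mathcal{L}}(\mathfrak{C})$ with $\sigma_g^{\ast}|_M=\sigma_g$ and $\sigma_g^{\ast}|_{\acl_{\mathcal{L}}^{\mathfrak{C}}(M^G)}=\tau_g$; in particular $\sigma_g^{\ast}$ fixes $F$ pointwise. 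Setting $M^{\ast}:=\dcl_{\mathcal{L}}^{\mathfrak{C}}(M\cup F)$, the maps $\sigma_g^{\ast}$ preserve $M^{\ast}$, and $g\mapsto\sigma_g^{\ast}|_{M^{\ast}}$ is a $G$-action (it acts as $\bar{\sigma}$ on $M$ and trivially on $F$, hence is a homomorphism), giving an extension $(M,\bar{\sigma})\subseteq(M^{\ast},\bar{\sigma}^{\ast})$ of models of $(T_{\forall})_G$.

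Existential closedness of $(M,\bar{\sigma})$ then yields $(M,\bar{\sigma})\preceq_1(M^{\ast},\bar{\sigma}^{\ast})$, and I would use this to rule out $F\supsetneq M^G$. Suppose $a\in F\setminus M^G$. Since $M^G$ is definably closed and $a\in\acl_{\mathcal{L}}^{\mathfrak{C}}(M^G)$, elimination of imaginaries provides an $\mathcal{L}$-formula $\varphi(x,\bar{m})$ with $\bar{m}\in M^G$ whose solution set is exactly the set of $\aut_{\mathcal{L}}(\mathfrak{C}/M^G)$-conjugates of $a$ (finite, of size $>1$ as $a\notin M^G$). Fixing a finite generating set $g_1,\dots,g_r$ of $G$, the existential $\mathcal{L}^G$-formula with parameters $\bar{m}$ asserting $\exists x\,(\varphi(x,\bar{m})\wedge\bigwedge_i\sigma_{g_i}(x)=x)$ holds in $(M^{\ast},\bar{\sigma}^{\ast})$ (witnessed by the $G$-fixed element $a$), hence in $(M,\bar{\sigma})$. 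A witness $a'\in M$ is then $G$-fixed, so $a'\in M^G=\dcl_{\mathcal{L}}^{\mathfrak{C}}(M^G)$, yet $a'$ realises $\tp_{\mathcal{L}}^{\mathfrak{C}}(a/M^G)$; thus $a'$ lies in the conjugacy class of $a$ while having a singleton orbit, forcing that class to be a singleton, i.e.\ $a\in M^G$, a contradiction. Therefore $F=M^G$ and $H_0=\mathcal{G}(M^G)$, so $\Xi$ is a Frattini cover.

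I expect the main obstacle to be the amalgamation step producing $\sigma_g^{\ast}$: one must fuse the given action $\sigma_g$ on $M$ with an $F$-fixing automorphism $\tau_g$ of $\acl_{\mathcal{L}}^{\mathfrak{C}}(M^G)$ into a single automorphism of $\mathfrak{C}$, and then check that the resulting family is an honest $G$-action extending $\bar{\sigma}$. This is precisely where both the surjectivity hypothesis $\Xi(H_0)=\mathcal{A}$ (to find $\tau_g$ agreeing with $\sigma_g$ on $M\cap\acl_{\mathcal{L}}^{\mathfrak{C}}(M^G)$) and the regularity of $M$ over $M\cap\acl_{\mathcal{L}}^{\mathfrak{C}}(M^G)$ (to invoke Fact \ref{regular}) are essential; a secondary point requiring care is choosing $\varphi$ so that the existential transfer genuinely detects the nontrivial conjugacy of $a$ over $M^G$.
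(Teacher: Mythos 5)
Your proof is correct, but it takes a genuinely different route from the paper, which offers no argument at all for this corollary: the statement is presented as (at least partially) established by Corollary 3.47 of \cite{Hoff3}, to which the reader is deferred. You instead verify Definition \ref{frattini0} directly: identify the closed subgroup $H_0$ with $\aut_{\mathcal{L}}\big(\acl_{\mathcal{L}}^{\mathfrak{C}}(M^G)/F\big)$ for $F=\acl_{\mathcal{L}}^{\mathfrak{C}}(M^G)^{H_0}$ via Fact \ref{galois.correspondence}; splice each $\sigma_g$ with a lift $\tau_g\in H_0$ using Fact \ref{regular}, which is legitimate because $M\cap\acl_{\mathcal{L}}^{\mathfrak{C}}(M^G)\subseteq M$ is regular; note that the resulting $\sigma_g^{\ast}$ restrict to an honest $G$-action on $\dcl_{\mathcal{L}}^{\mathfrak{C}}(M\cup F)$ since two automorphisms of $\mathfrak{C}$ agreeing on $M\cup F$ agree on its definable closure; and finally use existential closedness of $(M,\bar{\sigma})$ together with finite generation of $G$ to force any $G$-fixed point of $F$ into $M^G$, whence $F=M^G$ and $H_0=\mathcal{G}(M^G)$. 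This is exactly in the spirit of the techniques the paper itself deploys in Proposition \ref{profinite.completion} and Theorem \ref{PAC.proj}, and what it buys is self-containedness: the corollary no longer rests on an external citation. Be aware that you and the paper share the same imported background from \cite{Hoff3}, namely that $M$ and $M^G$ are definably closed (being PAC) and that $M\cap\acl_{\mathcal{L}}^{\mathfrak{C}}(M^G)$ is normal over $M^G$; this is precisely what makes $\Xi$ a well-defined continuous epimorphism in your appeal to Fact \ref{fact.exact}. Two minor points: elimination of imaginaries is not needed to produce $\varphi(x,\bar{m})$ --- an algebraic type is isolated by any formula in it with minimally many solutions, and by quantifier elimination that formula may be taken quantifier-free, which is what the $\preceq_1$-transfer actually requires; and the final contradiction is cleanest phrased as: $a'\in\dcl_{\mathcal{L}}^{\mathfrak{C}}(M^G)$ realizes $\tp_{\mathcal{L}}^{\mathfrak{C}}(a/M^G)$, so that type has a unique realization, forcing $a=a'\in M^G$.
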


A Frattini cover is \emph{universal} if its domain is a projective profinite group (and then it is the smallest projective cover, the universal Frattini cover of a group $H$ will be denoted by $\fratt(H)\to H$). By Proposition 3.52 and Proposition 3.57 in \cite{Hoff3}, we know that $M^G$ and $M$ are PAC as substructures of $\mathfrak{C}$. Hence Theorem \ref{PAC.proj} shows that the map $\Xi$ is in fact universal Frattini cover. 

\begin{cor}
The restriction map $\Xi$ is the universal Frattini cover.
\end{cor}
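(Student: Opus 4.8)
The plan is to observe that this corollary is an immediate assembly of three ingredients already at hand: the preceding corollary, the PAC-ness of the invariant substructure, and the central Theorem \ref{PAC.proj}. By the definition recalled just above the statement, a Frattini cover is \emph{universal} precisely when its domain is a projective profinite group (equivalently, when it is the smallest projective cover). Since the previous corollary already establishes that $\Xi$ is a Frattini cover, the only thing left to check is that the source group $\mathcal{G}(M^G)$ is projective.

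First I would invoke Proposition 3.52 (together with Proposition 3.57) of \cite{Hoff3}, which guarantees that $M^G$ is a PAC substructure of $\mathfrak{C}$; this is exactly the hypothesis needed to feed $M^G$ into Theorem \ref{PAC.proj}. Applying Theorem \ref{PAC.proj} to the PAC substructure $M^G$ then yields that its absolute Galois group $\mathcal{G}(M^G)$ is a projective profinite group. Combining this with the Frattini-cover property of $\Xi$ from the previous corollary, the definition of universality is met verbatim, and hence the restriction map
$$\Xi:\mathcal{G}(M^G)\to\aut_{\mathcal{L}}(M\cap\acl_{\mathcal{L}}^{\mathfrak{C}}(M^G)/M^G)$$
is the universal Frattini cover, i.e.\ it realises $\mathcal{G}(M^G)$ as $\fratt(\mathcal{A})$ over $\mathcal{A}$.

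As for difficulty, there is no genuine obstacle in the corollary itself: all the real work sits upstream, in the projectivity of absolute Galois groups of PAC substructures (Theorem \ref{PAC.proj}) and in the preceding corollary establishing that $\Xi$ is a Frattini cover. The present statement is a clean combination of those two results through the definition of the universal Frattini cover. The only point worth a single sentence of care is to confirm that ``smallest projective cover'' and ``Frattini cover with projective domain'' describe the same object, which is exactly the content of the definition recalled above; granting that, the proof reduces to citing Proposition 3.52 and Proposition 3.57 in \cite{Hoff3} together with Theorem \ref{PAC.proj}.
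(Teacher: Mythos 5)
Your proof is correct and takes essentially the same approach as the paper: the paper's own argument (given in the text just before the corollary) likewise cites Proposition 3.52 and Proposition 3.57 of \cite{Hoff3} for the PAC-ness of $M^G$ and $M$, then applies Theorem \ref{PAC.proj} to get projectivity of $\mathcal{G}(M^G)$, and concludes by the definition of a universal Frattini cover combined with the preceding corollary that $\Xi$ is a Frattini cover.
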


\noindent
Moreover, Proposition \ref{profinite.completion} allows us 
to place $\hat{G}$ in the following short exact sequence
$$\mathcal{G}\big(M\cap\acl_{\mathcal{L}}^{\mathfrak{C}}(M^G)\big) \to\mathcal{G}\big(M^G\big)\to \aut_{\mathcal{L}}\big(M\cap\acl_{\mathcal{L}}^{\mathfrak{C}}(M^G)/M^G\big)\cong\hat{G}$$
We conclude:

\begin{cor}\label{cor.Galois.descr}
It follows that
\begin{enumerate}
\item
$\mathcal{G}(M^G)\cong\fratt(\hat{G})$,

\item
$\mathcal{G}\big(M\cap\acl_{\mathcal{L}}^{\mathfrak{C}}(M^G)\big)\cong\ker\Big(\fratt\big(\hat{G}\big)\to \hat{G}\Big)$.
\end{enumerate}
\end{cor}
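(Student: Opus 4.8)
The plan is to obtain both isomorphisms directly from the two corollaries preceding the statement together with Proposition \ref{profinite.completion}, so that almost no new work is required beyond bookkeeping of the identifications.

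For item (1), I would start from the fact, just established, that the restriction map
$$\Xi\colon\mathcal{G}(M^G)\to\aut_{\mathcal{L}}\big(M\cap\acl_{\mathcal{L}}^{\mathfrak{C}}(M^G)/M^G\big)$$
is the universal Frattini cover of its target, and that by Proposition \ref{profinite.completion} this target $\mathcal{A}$ is isomorphic to $\hat{G}$. Fixing an isomorphism $\mathcal{A}\cong\hat{G}$ and composing it with $\Xi$, we exhibit $\mathcal{G}(M^G)$ as the domain of a universal Frattini cover of $\hat{G}$. Since the universal Frattini cover of a profinite group is unique up to isomorphism over the base, and $\fratt(\hat{G})\to\hat{G}$ is by definition such a cover, I conclude $\mathcal{G}(M^G)\cong\fratt(\hat{G})$, via an isomorphism lying over $\hat{G}$.

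For item (2), I would use the short exact sequence
$$1\to\mathcal{G}\big(M\cap\acl_{\mathcal{L}}^{\mathfrak{C}}(M^G)\big)\to\mathcal{G}(M^G)\xrightarrow{\ \Xi\ }\hat{G}\to 1$$
recorded above the statement (where $\Xi$ is identified with its composition with $\mathcal{A}\cong\hat{G}$), which identifies $\mathcal{G}\big(M\cap\acl_{\mathcal{L}}^{\mathfrak{C}}(M^G)\big)$ with $\ker\Xi$. Transporting this kernel along the isomorphism of item (1) — and using that this isomorphism lies over $\hat{G}$, so that $\Xi$ is carried to the canonical projection $\fratt(\hat{G})\to\hat{G}$ — immediately gives $\mathcal{G}\big(M\cap\acl_{\mathcal{L}}^{\mathfrak{C}}(M^G)\big)\cong\ker\big(\fratt(\hat{G})\to\hat{G}\big)$.

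The single point that genuinely needs attention, and which I regard as the main (if modest) obstacle, is the compatibility of the isomorphism in item (1) with the two projections onto $\hat{G}$. This is exactly where the \emph{universal} in ``universal Frattini cover'' does the work: uniqueness of that cover is a statement relative to the base group, so the isomorphism can and must be chosen over $\hat{G}$. I would record this compatibility explicitly, because item (2) compares kernels, and the kernel of a cover is not determined by the total group alone but only together with its projection. Once the isomorphism $\mathcal{A}\cong\hat{G}$ of Proposition \ref{profinite.completion} is fixed and the isomorphism of item (1) is chosen over it, both items follow with no further computation.
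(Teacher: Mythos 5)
Your proposal is correct and follows essentially the same route as the paper: the paper also derives both items immediately from the preceding corollary that $\Xi$ is the universal Frattini cover, Proposition \ref{profinite.completion} identifying its target with $\hat{G}$, and the displayed short exact sequence identifying $\mathcal{G}\big(M\cap\acl_{\mathcal{L}}^{\mathfrak{C}}(M^G)\big)$ with $\ker\Xi$. Your explicit insistence that the isomorphism of item (1) be chosen over $\hat{G}$ (so that kernels correspond) is a point the paper leaves implicit, but it is the same argument, just recorded more carefully.
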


The above corollary was known in the case of fields (\cite{sjogren}, \cite{nacfa}). Actually \cite[Theorem 6.]{sjogren} states even more:
$$\mathcal{G}(K)\cong\ker\Big(\fratt\big(\hat{G}\big)\to \hat{G}\Big),$$
where $(K,\bar{\sigma})$ is an existentially closed field with an action of group $G$. Unfortunately, the proof of \cite[Theorem 6.]{sjogren} is not correct and Theorem 6. in \cite{sjogren}, which seems to be a very reasonable statement, can not be considered as already proven.

Now, we will discuss some aspects of the proof of \cite[Theorem 6.]{sjogren} in our stable context.
Obviously, we need to examine the following restriction map:
$$\Theta:\mathcal{G}(M) \to\mathcal{G}\big(M\cap\acl_{\mathcal{L}}^{\mathfrak{C}}(M^G)\big).$$
Since $M\cap\acl_{\mathcal{L}}^{\mathfrak{C}}(M^G)\subseteq M$ is regular, Lemma \ref{regular.Galois} assures us that $\Theta$ is onto. The map $\Theta$ is one-to-one if and only if
\begin{equation}\tag{$\ast$}
\acl_{\mathcal{L}}^{\mathfrak{C}}(M)=\dcl_{\mathcal{L}}^{\mathfrak{C}}\big(M,\acl_{\mathcal{L}}^{\mathfrak{C}}(M^G) \big).
\end{equation}
The incorrect proof of \cite[Theorem 6.]{sjogren} uses the fact that $M\cap\acl_{\mathcal{L}}^{\mathfrak{C}}(M^G)$ is PAC, which is true in the case of fields, but there are no reasons for that in the general framework (since ACF is $\omega$-stable, there are only finitely many non-forking extensions of a type as required in Lemma \ref{alg.ext.PAC}, but this is not the case as follows from Remark \ref{counterexamples.alg.PAC}). Therefore we think about a slight modification of the statement of Theorem 6. in \cite{sjogren}. Mainly:

\begin{conj}\label{conjecture1}
The map $\Theta:\mathcal{G}(M) \to\mathcal{G}\big(M\cap\acl_{\mathcal{L}}^{\mathfrak{C}}(M^G)\big)$ is the universal Frattini cover.
\end{conj}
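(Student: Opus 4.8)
The plan is to verify, for $\Theta$, the three properties that characterise a universal Frattini cover. Two of them are already available. The map $\Theta$ is an epimorphism: as noted above, $M\cap\acl_{\mathcal{L}}^{\mathfrak{C}}(M^G)\subseteq M$ is regular, so Lemma \ref{regular.Galois} gives surjectivity. Its domain $\mathcal{G}(M)$ is projective, since $M$ is PAC (Proposition 3.57 in \cite{Hoff3}) and hence Theorem \ref{PAC.proj} applies. By the characterisation of the universal Frattini cover as the unique Frattini cover with projective domain (see \cite{FrJa}), it remains only to prove that $\Theta$ is a \emph{Frattini cover} in the sense of Definition \ref{frattini0}. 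It is worth recording the true strength of the statement: by Corollary \ref{cor.Galois.descr} we have $\mathcal{G}\big(M\cap\acl_{\mathcal{L}}^{\mathfrak{C}}(M^G)\big)\cong\ker\big(\fratt(\hat{G})\to\hat{G}\big)$, a closed subgroup of the projective group $\fratt(\hat{G})\cong\mathcal{G}(M^G)$, hence itself projective (closed subgroups of projective profinite groups are projective). Thus the universal Frattini cover of $\mathcal{G}\big(M\cap\acl_{\mathcal{L}}^{\mathfrak{C}}(M^G)\big)$ is the identity, and establishing the conjecture is in fact equivalent to proving that $\Theta$ is an \emph{isomorphism}, that is, to establishing $(\ast)$. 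The Frattini-cover formulation is therefore a convenient intermediate target that nonetheless carries the full force of the (corrected) statement of \cite[Theorem 6.]{sjogren}.

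Next I would translate the Frattini-cover property through the Galois correspondence (Fact \ref{galois.correspondence}). Let $H_0\leqslant\mathcal{G}(M)$ be closed with $\Theta(H_0)=\mathcal{G}\big(M\cap\acl_{\mathcal{L}}^{\mathfrak{C}}(M^G)\big)$ and set $F:=\acl_{\mathcal{L}}^{\mathfrak{C}}(M)^{H_0}$, so $M\subseteq F=\dcl_{\mathcal{L}}^{\mathfrak{C}}(F)\subseteq\acl_{\mathcal{L}}^{\mathfrak{C}}(M)$. Since an element of $\acl_{\mathcal{L}}^{\mathfrak{C}}(M^G)$ is fixed by $\Theta(H_0)$ exactly when it lies in $F$, the fixed set of $\Theta(H_0)$ inside $\acl_{\mathcal{L}}^{\mathfrak{C}}(M^G)$ is $F\cap\acl_{\mathcal{L}}^{\mathfrak{C}}(M^G)$; applying the Galois correspondence to the Galois extension $M\cap\acl_{\mathcal{L}}^{\mathfrak{C}}(M^G)\subseteq\acl_{\mathcal{L}}^{\mathfrak{C}}(M^G)$, the surjectivity hypothesis becomes
$$F\cap\acl_{\mathcal{L}}^{\mathfrak{C}}(M^G)=M\cap\acl_{\mathcal{L}}^{\mathfrak{C}}(M^G),$$
while the desired conclusion $H_0=\mathcal{G}(M)$ reads $F=M$. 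Hence the whole problem reduces to the algebraic claim
$$\big(M\subseteq F=\dcl_{\mathcal{L}}^{\mathfrak{C}}(F)\subseteq\acl_{\mathcal{L}}^{\mathfrak{C}}(M)\ \text{and}\ F\cap\acl_{\mathcal{L}}^{\mathfrak{C}}(M^G)=M\cap\acl_{\mathcal{L}}^{\mathfrak{C}}(M^G)\big)\ \Longrightarrow\ F=M,$$
which—combined with the projectivity of the target noted above—is a reformulation of $(\ast)$: no proper algebraic extension of $M$ inside $\acl_{\mathcal{L}}^{\mathfrak{C}}(M)$ is invisible to $\acl_{\mathcal{L}}^{\mathfrak{C}}(M^G)$.

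The main obstacle is this last claim, and it is exactly the point where the field-theoretic argument of \cite{sjogren} fails, since it relied on $M\cap\acl_{\mathcal{L}}^{\mathfrak{C}}(M^G)$ being PAC—a property unavailable here by Remark \ref{counterexamples.alg.PAC}. My intended route is to use that $(M,\bar{\sigma})$ is existentially closed among models of $(T_{\forall})_G$, imitating the constructions in Proposition \ref{fr.ja.133} and Proposition \ref{profinite.completion}: given a hypothetical $f\in\aut_{\mathcal{L}}(\acl_{\mathcal{L}}^{\mathfrak{C}}(M)/M)$ fixing $\acl_{\mathcal{L}}^{\mathfrak{C}}(M^G)$ pointwise yet moving some $\alpha\in\acl_{\mathcal{L}}^{\mathfrak{C}}(M)$, one would build a $G$-equivariant regular extension of $M$—via a Morley sequence in a non-algebraic stationary type (Fact \ref{cor.stationary_types_exist}) carrying a $G$-action through a suitable finite quotient, with the two actions extended simultaneously by Fact \ref{regular.PAPA}—inside which the algebraic relation tying $\alpha$ to the invariants becomes existentially expressible, and then invoke existential closedness to reflect that relation down into $M$, forcing $\alpha\in\dcl_{\mathcal{L}}^{\mathfrak{C}}\big(M,\acl_{\mathcal{L}}^{\mathfrak{C}}(M^G)\big)$. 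The delicate step—and the reason the statement is only conjectural—is that the algebraicity of $\alpha$ is recorded over $M$ rather than over $M^G$, so it is unclear how to encode it by an existential $\mathcal{L}^G$-formula whose truth is guaranteed in a regular $G$-extension; closing this gap without the PAC-ness of $M\cap\acl_{\mathcal{L}}^{\mathfrak{C}}(M^G)$ is the crux of the difficulty.
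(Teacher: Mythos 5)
Your proposal does not prove the statement, and this is not something you could have fixed by pushing harder along the same lines: the statement is Conjecture \ref{conjecture1} of the paper, and the paper itself offers no proof. All the paper records is (i) that $\Theta$ is onto (regularity of $M\cap\acl_{\mathcal{L}}^{\mathfrak{C}}(M^G)\subseteq M$ plus Lemma \ref{regular.Galois}), (ii) that $\Theta$ is injective exactly when $(\ast)$ holds, and (iii) that the field-theoretic argument of \cite{sjogren} breaks down in the stable setting because $M\cap\acl_{\mathcal{L}}^{\mathfrak{C}}(M^G)$ need not be PAC (Lemma \ref{alg.ext.PAC} needs a finiteness-of-extensions hypothesis that fails in general, cf.\ Remark \ref{counterexamples.alg.PAC}). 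That said, your first two paragraphs are correct and even sharpen the paper's own Remark following the conjecture: since $\mathcal{G}\big(M\cap\acl_{\mathcal{L}}^{\mathfrak{C}}(M^G)\big)\cong\ker\big(\fratt(\hat{G})\to\hat{G}\big)$ by Corollary \ref{cor.Galois.descr} is a closed subgroup of the projective group $\fratt(\hat{G})$, hence projective, the conjecture is indeed equivalent to $\Theta$ being an isomorphism, i.e.\ to $(\ast)$; and your Galois-correspondence translation of the Frattini property into the claim about intermediate structures $F$ is sound (note that $\Theta(H_0)$ is closed, being a continuous image of a compact group, so Fact \ref{galois.correspondence} applies to it).

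The genuine gap is the one you name yourself: your final paragraph is a plan, not an argument. Nothing in it shows that an automorphism of $\acl_{\mathcal{L}}^{\mathfrak{C}}(M)$ over $M$ fixing $\acl_{\mathcal{L}}^{\mathfrak{C}}(M^G)$ pointwise must be the identity. The proposed mechanism --- encoding the algebraicity of $\alpha$ into an existential $\mathcal{L}^G$-statement that holds in a suitably built regular $G$-extension and can then be reflected into $M$ by existential closedness --- is exactly where no construction is supplied, and the mismatch is structural: existential closedness of $(M,\bar{\sigma})$ controls existential $\mathcal{L}^G$-formulas with parameters in $M$, whereas the obstruction to $(\ast)$ lives among elements of $\acl_{\mathcal{L}}^{\mathfrak{C}}(M)\setminus\dcl_{\mathcal{L}}^{\mathfrak{C}}\big(M,\acl_{\mathcal{L}}^{\mathfrak{C}}(M^G)\big)$, whose defining algebraic data sit over $M$ and not over the invariants one can quantify about. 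In the field case this is bypassed precisely by the PAC-ness of $M\cap\acl_{\mathcal{L}}^{\mathfrak{C}}(M^G)$, which is unavailable here. So the honest verdict is: correct reductions, but the core claim is never established --- as it must be, since the paper poses it as an open conjecture rather than a theorem.
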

Note that Conjecture \ref{conjecture1} implies \cite[Theorem 6.]{sjogren}:

\begin{remark}
If the map $\Theta:\mathcal{G}(M) \to\mathcal{G}\big(M\cap\acl_{\mathcal{L}}^{\mathfrak{C}}(M^G)\big)$ is the universal Frattini cover and $\mathcal{G}\big(M\cap\acl_{\mathcal{L}}^{\mathfrak{C}}(M^G)\big)$ is projective then
$\mathcal{G}(M) \cong\mathcal{G}\big(M\cap\acl_{\mathcal{L}}^{\mathfrak{C}}(M^G)\big)$.
\end{remark}

Algebraic structure and model-theoretic structure of a PAC field are controlled by its absolute Galois group (e.g. Theorem 20.3.3 in \cite{FrJa}). 
The same remains true for arbitrary PAC substructures (embedded in an ambient stable monster, see \cite{DHL1}).
Conjecture \ref{conjecture1} gives us a way to produce PAC substructures of a monster model of our chosen stable theory $T$, which absolute Galois groups can be ``calculated" (as the kernel of the universal Frattini cover of the kernel of the universal Frattini cover of the profinite completion of a finitely generated group $G$).

In particular, since we start with $G$ being finitely generated, we see that 
$\hat{G}=G$ and that $\mathcal{G}(M^G)\cong\fratt(G)$ is finitely generated (by Lemma 22.6.2 in \cite{FrJa}). 
Therefore $M^G$ is bounded PAC substructure and one could expect that $M^G$ is simple (as in \cite{Polkowska}). However to use \cite{Polkowska}, one needs to verify whether being PAC is a first order property (Definition 2.7 and Section 3. in \cite{Polkowska}) or to show that the class of existentially closed substructures with $G$-action is elementary (Theorem 4.40 in \cite{Hoff3}).

In the case of finite $G$, we have that $\hat{G}=G$ and
$M\subseteq\acl_{\mathcal{L}}^{\mathfrak{C}}(M^G)$ (by point (3) in Lemma \ref{N_Galois}). Hence
$$\mathcal{G}(M)=\mathcal{G}(M\cap\acl_{\mathcal{L}}^{\mathfrak{C}}(M^G))\cong
\ker\Big(\fratt(G)\to G\Big)$$
i.e. we obtain PAC structure $M$ which absolute Galois group $\mathcal{G}(M)$ is known (compare to main results of \cite{DHL1}).

\bibliographystyle{plain}
\bibliography{1nacfa2}

\end{document}